\DeclareSymbolFont{SY}{U}{psy}{m}{n}
\DeclareMathSymbol{\emptyset}{\mathord}{SY}{'306}
\theoremstyle{plain}
\newtheorem{thm}{Theorem}[section]
\newtheorem{cor}[thm]{Corollary}
\newtheorem{lem}[thm]{Lemma}
\newtheorem{prop}[thm]{Proposition}
\theoremstyle{definition}
\newtheorem{defn}[thm]{Definition}
\newtheorem{rem}[thm]{Remark}
\numberwithin{equation}{section}
\def\g{\gamma}
\def\wi{\widetilde}
\def\beq{\begin{eqnarray}}
\def\eeq{\end{eqnarray}}
\def\beqa{\begin{eqnarray*}}
\def\eeqa{\end{eqnarray*}}
\begin{document}
%%%%%%%%%%%%%%%%%%%%%%%%%%%%%%%%%%%%%%%%%%%%%%%%%%%%%%%%%%%%%%%%%%%%%%%%
\title{Rigidity of the flag structure for a class of  Cowen-Douglas operators}
%\thanks{Partially supported by National Science Foundsation
%Grant DMS...}
\author[Ji]{Kui Ji}
\address[Ji]{Department of Mathematics, Hebei Normal University,
Shijiazhuang, Heibei 050016, China} \email{jikuiji@gmail.com}

\author[Jiang]{Chunlan Jiang}
\address[Jiang]{Department of Mathematics, Hebei Normal University,
Shijiazhuang, Heibei 050016, China} \email{cljiang@gmail.com}

\author[Keshari]{Dinesh Kumar Keshari}
\address[Keshari]{Department of Mathematics, Texas A\&M University,
College Station, TX 77843, USA} \email{kesharideepak@gmail.com}
\author[Misra]{Gadadhar Misra}
\address[Misra]{Department of Mathematics, Indian Institute of Science,
Bangalore 560012, India} \email{gm@math.iisc.ernet.in}
\thanks{The work of K. Ji was supported by the Foundation for
the Author of National Excellence Doctoral Dissertation of China
(Grant No. 201116). The work of C. Jiang was supported by
 National Natural Science Foundation of China (Grant No. A010602). The work of D.K.
Keshari was supported by a Research Associate fellowship of Indian
Institute of Science. The work of G. Misra was supported by J C
Bose National Fellowship and the UGC, SAP-IV}
\subjclass[2010]{47B32, 47B35} \keywords{The Cowen-Douglas class,
strongly irreducible operator, homogeneous operator, curvature,
second fundamental form}
\begin{abstract}
The explicit description of irreducible homogeneous operators in
the Cowen-Douglas class and the localization of Hilbert modules naturally leads to the definition of a smaller class of Cowen-Douglas operators possessing a flag structure. These operators are shown to be irreducible. It is also shown that the flag structure is rigid in that the unitary equivalence class of the operator and the flag structure determine each other. We obtain a complete set of unitary invariants which are somewhat more tractable than those of an arbitrary operator in the Cowen-Douglas class.
\end{abstract}

\maketitle
%%%%%%%%%%%%%%%%%%%%%%%%%%%%%%%%%%%%%%%%%%%%%%%%%%%%%%%%%%%%%%%%%%%%%%%%%%%
\section{Introduction}
Let $\mathcal H$ be a complex separable Hilbert space and
$\mathcal L(\mathcal H)$ denote the collection of bounded linear
operators on $\mathcal H$. The following important class of
operators was introduced in \cite{cd}.

\begin{defn}
For a connected open subset $\Omega$ of $\mathbb C$ and a positive
integer $n$, let
\begin{eqnarray*}
B_n(\Omega)  =  &\big \{& T\in\mathcal L(\mathcal H)\,|\,\,
\Omega\subset\sigma(T),\\
&& {\mathrm{ran}}\,(T-w)= \mathcal H\mbox{ for }w\in\Omega, \\
&&\bigvee_{w\in\Omega}\ker(T-w)= \mathcal H,\\
&&\dim~\ker(T-w)= n\mbox{ for } w\in\Omega\,\,  \big \},
\end{eqnarray*}
where $\sigma(T)$ denotes the spectrum of the operator $T$.
\end{defn}

We recall (cf. \cite{cd}) that an operator $T$ in the class
$B_n(\Omega)$ defines a holomorphic Hermitian vector bundle $E_T$
in a natural manner. It is the holomorphic sub-bundle of the trivial bundle $\Omega\times\mathcal H$ defined by
$$E_T= \{(w, x)\in\Omega\times\mathcal H: x\in \ker(T-w)\}$$
with the natural projection map $\pi:E_T\to \Omega$,  $\pi(w, x)=
w$. It is shown in \cite[Proposition 1.12]{cd} that if  $T$ is in $B_n(\Omega),$ then the mapping
$w\mapsto\ker(T-w)$ defines a rank $n$  holomorphic Hermitian
vector bundle $E_T$ over $\Omega.$  We reproduce below one of the main results from \cite{cd}. 
\begin{thm}\label{eq}
The operators $T$ and $\wi T$ in $B_n(\Omega)$ are unitarily
equivalent if and only if the corresponding holomorphic Hermitian
vector bundles $E_T$ and $E_{\wi T}$ are equivalent.
\end{thm}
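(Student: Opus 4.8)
The forward implication is routine: if $U\colon\H\to\wi\H$ is unitary with $UT=\wi TU$, then $U$ carries $\ker(T-w)$ isometrically onto $\ker(\wi T-w)$ for each $w\in\Omega$, so $(w,x)\mapsto(w,Ux)$ is a holomorphic, fibre-wise isometric isomorphism $E_T\to E_{\wi T}$. The content of the theorem is the converse, and I would argue as follows. Fix $w_0\in\Omega$ and a disc $\Omega_0\subset\Omega$ about $w_0$ on which $E_T$ admits a holomorphic frame $\gamma_1,\dots,\gamma_n\colon\Omega_0\to\H$, so that $\{\gamma_1(w),\dots,\gamma_n(w)\}$ is a basis of $\ker(T-w)$ for every $w\in\Omega_0$; such frames exist by \cite[Proposition 1.12]{cd}. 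Let $\Phi\colon E_T\to E_{\wi T}$ be the given bundle equivalence and put $\wi\gamma_i=\Phi\circ\gamma_i$. Since $\Phi$ is holomorphic and fibre-wise linear, each $\wi\gamma_i$ is a holomorphic section of $E_{\wi T}$ over $\Omega_0$ and $\{\wi\gamma_1(w),\dots,\wi\gamma_n(w)\}$ is a basis of $\ker(\wi T-w)$; and since $\Phi$ preserves the Hermitian metrics, which are the ones inherited from $\H$ and $\wi\H$,
\[
\langle\gamma_i(w),\gamma_j(w)\rangle_{\H}=\langle\wi\gamma_i(w),\wi\gamma_j(w)\rangle_{\wi\H},\qquad 1\le i,j\le n,\ w\in\Omega_0 .
\]

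Next I would promote this diagonal identity to a two-variable identity. Expanding about $w_0$, write $\gamma_i(w)=\sum_{\alpha\ge0}a_{i,\alpha}(w-w_0)^{\alpha}$ and $\wi\gamma_i(w)=\sum_{\alpha\ge0}\wi a_{i,\alpha}(w-w_0)^{\alpha}$ with $a_{i,\alpha}\in\H$, $\wi a_{i,\alpha}\in\wi\H$. Substituting into the displayed equality and using the linear independence of the functions $(w-w_0)^{\alpha}\,\overline{(w-w_0)}^{\,\beta}$ on $\Omega_0$ yields $\langle a_{i,\alpha},a_{j,\beta}\rangle=\langle\wi a_{i,\alpha},\wi a_{j,\beta}\rangle$ for all $i,j,\alpha,\beta$, that is,
\[
\langle\gamma_i(w),\gamma_j(z)\rangle_{\H}=\langle\wi\gamma_i(w),\wi\gamma_j(z)\rangle_{\wi\H},\qquad w,z\in\Omega_0 .
\]
Let $\D$ be the linear span of $\{\gamma_i(w):1\le i\le n,\ w\in\Omega_0\}$ and define $U$ on $\D$ by $U\big(\sum_k c_k\gamma_{i_k}(w_k)\big)=\sum_k c_k\wi\gamma_{i_k}(w_k)$. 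The two-variable identity shows $\|U\xi\|^2=\|\xi\|^2$ for every $\xi\in\D$, so $U$ is a well-defined isometry on $\D$. Now $\D$ is dense in $\H$: if $\xi\perp\ker(T-w)$ for all $w\in\Omega_0$, then for any local holomorphic section $s$ of $E_T$ the function $w\mapsto\langle s(w),\xi\rangle$ is holomorphic and vanishes on an open set, hence vanishes on the connected component of its domain; propagating this along a chain of discs carrying holomorphic frames shows $\xi\perp\ker(T-w)$ for every $w\in\Omega$, whence $\xi\perp\bigvee_{w\in\Omega}\ker(T-w)=\H$ and $\xi=0$. The same argument applied to $\wi T$ shows the span of $\{\wi\gamma_i(w):w\in\Omega_0\}$ is dense in $\wi\H$. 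Therefore $U$ extends to a unitary $U\colon\H\to\wi\H$. Finally, $T\gamma_i(w)=w\gamma_i(w)$ and $\wi T\wi\gamma_i(w)=w\wi\gamma_i(w)$ give $UT\gamma_i(w)=w\,\wi\gamma_i(w)=\wi TU\gamma_i(w)$, so $UT=\wi TU$ on $\D$ and hence on all of $\H$.

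The two steps that need care are the passage from the diagonal to the off-diagonal identity and the density of $\D$. The first is a polarization argument that works precisely because the frames are holomorphic, so that the inner products are holomorphic in $w$ and conjugate-holomorphic in $z$; agreement on the (totally real) diagonal forces agreement everywhere, and the power-series computation above makes this explicit. The second reduces to the identity theorem for holomorphic $\H$-valued functions together with the connectedness of $\Omega$. Neither is a genuine obstacle once set up, and in particular no global holomorphic frame for $E_T$ is required—working on a single disc $\Omega_0$ suffices because the relevant spans are already dense.
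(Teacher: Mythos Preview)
Your argument is correct. The paper itself does not prove this theorem; it is quoted from Cowen and Douglas \cite{cd} as background, so there is no ``paper's own proof'' to compare against. What you have written is essentially the standard Cowen--Douglas argument: a local holomorphic frame, the polarization step that upgrades the diagonal metric identity $\langle\gamma_i(w),\gamma_j(w)\rangle=\langle\wi\gamma_i(w),\wi\gamma_j(w)\rangle$ to the sesqui-holomorphic two-variable identity, and then the usual density/extension to produce the unitary. Both delicate points you flag---the polarization via real-analyticity of $(w-w_0)^\alpha\overline{(w-w_0)}^{\,\beta}$ and the density of the local span via analytic continuation along chains of discs---are handled correctly.
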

They also find a set of complete invariant for this equivalence
consisting of curvature of $E_T$ and its covariant derivatives.
Unfortunately, these invariants are not easy to compute except
when the rank of the bundle is $1$. In this case, the curvature
\begin{eqnarray*}\mathcal K(w)\, dw\wedge\,d\bar{w} = -
\frac{~\partial^2\log\parallel{\gamma(w)}\parallel^2}{\partial{w}
\partial{\overline{w}}}dw\wedge\,d\bar{w}
\end{eqnarray*}
of the line bundle $E_T$, defined with respect to a non-zero
holomorphic section $\g$ of $E_T$, is a complete unitary invariant
of the operator $T.$ The
definition of the curvature, in this case, is independent of the choice of the non-vanishing section $\g$:  If $\g_0$ is another holomorphic (non-vanishing) section of $E$, then $\g_0=\phi\g$ for some holomorphic function $\phi$ on an open subset $\Omega_0$ of $\Omega$, consequently the harmonicity of log$|\phi|$ completes the verification. However, if the rank of the vector bundle is strictly greater than $1,$ then only the eigenvalues of the curvature are independent of the choice of the holomorphic frame. This limits the use of the curvature and its covariant derivative if the rank of the bundle is not $1.$ 
It is difficult to determine, in general, when an operator
$T\in B_n(\Omega)$ is irreducible, again except in the case $n=1$.
In this case, the rank of the vector bundle is $1$ and therefore it
is irreducible and so is the operator $T$. 

In this paper, we isolate a subset of irreducible operators in the Cowen-Douglas class $B_n(\Omega)$ for which a complete set of tractable unitary invariants is relatively easy to identify. We discuss this new class of operators in $B_2(\Omega)$ separately and then provide the details for the case of $n > 2.$ One important reason for separating out the case of $n=2$ is that the proofs that appear in this case are often necessary to begin an inductive proof in the case of an arbitrary $n\in \mathbb N.$ 

In a forthcoming paper, we construct similarity invariants for the operators in this new class. A generalization to the case of commuting tuples of operators is apparent which we intend to consider in future work. 

The results of this paper were announced in \cite{jjkm} and was the topic of a talk presented by the last author in the Workshop ``Hilbert Modules and Complex Geometry'' held during Apr 20 - 26, 2014 at Oberwolfach.

\subsection*{\large \sf Acknowledgement}The research reported here was started during a post doctoral visit of  Kui Ji to the Indian Institute of Science and was completed during a month long research visit of G. Misra to Hebei Normal university. We thank both of these Institutions for their admirable hospitality.
\section{A new class of operators in $B_2(\Omega)$}
\subsection{\large \sf Definitions} If  $T$ is an operator in ${B}_2(\Omega),$ then there exists a pair of operators  $T_0$ and $T_1$ in ${B}_1(\Omega)$ and a bounded operator $S$ such that  
$T=\Big ( \begin{smallmatrix}
T_0 & S \\
0 & T_1 \\
\end{smallmatrix}\Big ).$ This is Theorem 1.49 of \cite[page 48]{jw}. We show, the other way round, that two operators $T_0$ and $T_1$ from $B_1(\Omega)$ combine with the aid of an arbitrary bounded linear operator $S$ to produce an operator in $B_2(\Omega).$ 
\begin{prop}\label{prop2}
Let $T$ be a bounded linear operator of the form  $\Big ( \begin{smallmatrix}
T_0 & S \\
0 & T_1 \\
\end{smallmatrix}\Big ).$ Suppose that the two operators $T_0, T_1$ are in $B_1(\Omega).$ Then the operator $T$ is in $B_2(\Omega).$
\end{prop}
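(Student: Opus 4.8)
The plan is to verify the four defining conditions of $B_2(\Omega)$ for $T = \left(\begin{smallmatrix} T_0 & S \\ 0 & T_1 \end{smallmatrix}\right)$ directly, exploiting the upper-triangular structure. Throughout, I would work with a fixed $w \in \Omega$ and write $T - w = \left(\begin{smallmatrix} T_0 - w & S \\ 0 & T_1 - w \end{smallmatrix}\right)$. First I would observe that $\Omega \subset \sigma(T)$: since $T_0 - w$ has nontrivial kernel, picking $0 \neq x \in \ker(T_0 - w)$ gives $(T-w)(x,0) = 0$, so $T - w$ is not injective and hence not invertible. Next, surjectivity: given $(y_0, y_1) \in \mathcal H_0 \oplus \mathcal H_1$, first use $\mathrm{ran}(T_1 - w) = \mathcal H_1$ to solve $(T_1 - w)x_1 = y_1$, then use $\mathrm{ran}(T_0 - w) = \mathcal H_0$ to solve $(T_0 - w)x_0 = y_0 - Sx_1$; then $(x_0, x_1)$ maps to $(y_0, y_1)$. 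So $\mathrm{ran}(T - w) = \mathcal H$ for all $w \in \Omega$.

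The substantive point is the dimension of the kernel. A vector $(x_0, x_1) \in \ker(T-w)$ satisfies $(T_1 - w)x_1 = 0$ and $(T_0 - w)x_0 = -Sx_1$. Since $\dim\ker(T_1 - w) = 1$, write $x_1 = c\, t_1(w)$ for a nonvanishing holomorphic section $t_1$ of $E_{T_1}$ and a scalar $c$. Because $S t_1(w) \in \mathcal H_0$ and $\mathrm{ran}(T_0 - w) = \mathcal H_0$, there is $u \in \mathcal H_0$ with $(T_0 - w)u = -S t_1(w)$, and $u$ is determined up to adding an element of $\ker(T_0 - w)$, which is one-dimensional. Hence the solution set for $x_0$ given $x_1 = c\, t_1(w)$ is an affine line, and letting $c$ vary shows $\ker(T - w)$ is exactly two-dimensional: it is spanned by $(t_0(w), 0)$ and $(u, t_1(w))$, where $t_0$ is a nonvanishing section of $E_{T_0}$. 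This gives $\dim\ker(T - w) = 2$ for every $w \in \Omega$.

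Finally I would check the spanning condition $\bigvee_{w \in \Omega} \ker(T - w) = \mathcal H$. Since $(t_0(w), 0) \in \ker(T-w)$ and $\bigvee_{w} \ker(T_0 - w) = \mathcal H_0$ (as $T_0 \in B_1(\Omega)$), the closed span of the kernels contains $\mathcal H_0 \oplus 0$. Quotienting by $\mathcal H_0 \oplus 0$ — equivalently, composing with the projection onto the second coordinate — the image of $\ker(T - w)$ contains $t_1(w)$, and $\bigvee_w \ker(T_1 - w) = \mathcal H_1$, so the span of the kernels surjects onto $\mathcal H_1$; combined with containing $\mathcal H_0 \oplus 0$ this forces $\bigvee_{w \in \Omega} \ker(T - w) = \mathcal H_0 \oplus \mathcal H_1$. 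One should be slightly careful here that the holomorphic choice of $u = u(w)$ solving $(T_0 - w)u(w) = -S t_1(w)$ can be made (this follows from holomorphy of the resolvent-type construction together with $T_0 \in B_1(\Omega)$), but this is not needed for the four conditions as stated — only the pointwise statements are required. The main obstacle, such as it is, is organizing the kernel-dimension count cleanly; everything else is a routine unwinding of the block-triangular form.
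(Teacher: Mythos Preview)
Your argument is correct. The main difference from the paper's proof is how the surjectivity and kernel-dimension conditions are handled: the paper dispatches both at once by citing the standard Fredholm index result for block upper-triangular operators (so that $\mbox{ind}(T-w)=\mbox{ind}(T_0-w)+\mbox{ind}(T_1-w)=2$, and since $T-w$ is onto, $\dim\ker(T-w)=2$), whereas you verify surjectivity and compute the kernel dimension by hand from the triangular system. Your route is more elementary and self-contained; the paper's is shorter but relies on an external reference. For the spanning condition the two arguments are essentially the same, phrased dually: the paper shows any vector orthogonal to all $\gamma_0(w)$ and $\gamma_1(w)$ must vanish, while you show the closed span contains $\mathcal H_0\oplus 0$ and then, modulo that, all of $0\oplus\mathcal H_1$. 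Your aside about holomorphy of $u(w)$ is indeed irrelevant here, as you note.
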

\begin{proof} 
Suppose $T_0$ and $T_1$ are defined on the Hilbert spaces $\mathcal H_0$ and $\mathcal H_1,$ respectively. Elementary considerations from index theory of Fredholm operators shows that the operator $T$ is Fredholm and $\mbox{\rm ind}(T) = \mbox{\rm ind}(T_0) + \mbox{\rm ind}(T_1)$ (cf. \cite[page 360]{Con}). Therefore, to complete the proof that $T$ is in $B_2(\Omega),$ all we have to do is prove that the vectors in the kernel $\ker (T -w),$ $w\in \Omega,$ span the Hilbert space $\mathcal H = \mathcal H_0 \oplus \mathcal H_1.$

Let $\gamma_0$ and $t_1$ be non-vanishing holomorphic sections for the two line bundles $E_0$ and $E_1$ corresponding to the operators $T_0$ and $T_1,$ respectively. 
%By choosing small enough neighborhood, we can assume
%that $S(t_1)$ is a holomorphic frame of the bundle $E_{T_0}$. Set
%$\gamma_0(w):=S(t_1(w))$ and $\gamma_1(w):=\frac{\partial}{\partial w}\gamma_0(w)-t_1(w)$. Clearly, $\{\gamma_0(w),\gamma_1(w)\}\subseteq \ker\,(T-w)$ for $w\in\Omega$ and each of the following statements is easy to verify.
%\begin{enumerate}
%\item The set of vectors $\{\gamma_0(w), \gamma_1(w)\}$ are linear independent.
%\item  $\dim\,{\ker}\, (T-w)=2$ for all $w$ in $\Omega.$
%\item  The closure of the linear span of the vectors $\{\ker\,(T-w): w\in \Omega\}$ equals ${\mathcal H}.$
%\item For $w$ in $\Omega,$ the operator $T-w$ is surjective. 
%\end{enumerate}
%In consequence, the operator $T$ is in $B_2(\Omega).$
For each $w\in \Omega,$ the operator $T_0-w_0$ is surjective. Therefore we can find a vector $\alpha(w)$ in $\mathcal H_0$ such that $(T_0-w) \alpha(w) = - S(t_1(w)),$ $w\in \Omega.$  Setting $\gamma_1(w) = \alpha(w) + t_1(w),$ we see that 
$$
(T-w) \gamma_1(w) = 0 = (T-w) \gamma_0(w).
$$
Thus  $\{\gamma_0(w),\gamma_1(w)\}\subseteq \ker\,(T-w)$ for $w$ in   $\Omega.$ If $x$ is any vector orthogonal to $\ker (T-w),$ $w\in \Omega,$ then in particular it is orthogonal to the vectors $\gamma_0(w)$ and $\gamma_1(w),$ $w\in \Omega,$ forcing it to be the zero vector.
\end{proof}

We impose one additional condition on these operators, namely,  $T_0 S = ST_1$ and assume that the operator $S$ is non-zero.   
With this seemingly innocuous hypothesis, we show that 
%\emph{(i)} the operator $T$ is in  $B_2(\Omega),$  
\emph{(i)} it is irreducible,  \emph{(ii)} and that any intertwining unitary operator between two of these operators must be diagonal and \emph{(iii)} the curvature of $E_{T_0}$ together with the second fundamental form of the inclusion $E_{T_0}\subseteq E_T$ form a complete set of unitary invariants for the operator $T.$  It is therefore natural to isolate this class of operators.  

%As an application, we describe unitary equivalence classes of  homogeneous operators in $B_2(\mathbb D).$ Homogeneous operators in $B_n(\mathbb D),$ were described in \cite{GM} for $n=1$, in \cite{Wil} for $n=2$ and in \cite{AK} for an arbitrary $n.$  
%The proofs of \cite{Wil} and \cite{AK} use machinery of Differential geometry and Lie groups respectively. While the description below is very close to the spirit of \cite{GM}. 

\begin{defn}
We let $\mathcal{F}B_2(\Omega)$ denote the set of all bounded linear operators $T$  of the form
$T=\Big ( \begin{smallmatrix}
 T_0 & S \\
 0 & T_1 \\
\end{smallmatrix}\Big ),$ 
where the two operators $T_0,T_1$ are assumed to be in the Cowen-Douglas class $B_1(\Omega)$ and the operator $S$ is assumed to be a non-zero intertwiner  between them, that is, $T_0S=T_1S.$

Specifically, if the operator $T_i,$ $i=0,1,$ is defined on the  separable complex Hilbert space $\mathcal H_i,$ then $S$ is assumed to be a non-zero bounded linear operator from $\mathcal H_1$ to $\mathcal H_0$ such that $T_0S=T_1S.$ The operator $T$ is defined on the Hilbert space $\mathcal H:= \mathcal H_0\oplus \mathcal H_1.$
\end{defn}
Each of the operators in $\mathcal FB_2(\Omega)$ is also in the Cowen-Douglas class $B_2(\Omega)$ by virtue of Proposition \ref{prop2}. Thus $\mathcal FB_2(\Omega) \subseteq B_2(\Omega).$

Although, in the definition of the class $\mathcal FB_2(\Omega)$ given above, we have only assumed that $S$ is non-zero, its range must be dense as is shown below. 
\begin{prop}\label{dr}
Suppose $T_0$ and $T_1$ are two operators in $B_1(\Omega)$, and $S$ is a bounded operator intertwining $T_0$ and $T_1,$ that is, $T_0 S= S T_1$. Then $S$ is non zero if and only if range of $S$ dense if and only if $S^*$ is
injective.
\end{prop}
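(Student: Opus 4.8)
The plan is to prove the chain of equivalences by establishing the two implications ``$S$ nonzero $\Rightarrow$ $\operatorname{ran} S$ dense'' and ``$\operatorname{ran} S$ dense $\Leftrightarrow$ $S^*$ injective'', the latter being the standard duality $\overline{\operatorname{ran} S} = (\ker S^*)^\perp$ together with the trivial converse (density of the range forces $S$ nonzero). So the only substantive point is: an intertwiner $S$ with $T_0 S = S T_1$ that is not identically zero must have dense range.

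First I would exploit the holomorphic structure. Let $t_1$ be a non-vanishing holomorphic section of the line bundle $E_{T_1}$, so $T_1 t_1(w) = w\, t_1(w)$ for $w \in \Omega$. The intertwining relation gives $T_0 (S t_1(w)) = S T_1 t_1(w) = w\, (S t_1(w))$, hence $S t_1(w) \in \ker(T_0 - w)$ for every $w \in \Omega$. Since $\ker(T_0-w)$ is one-dimensional, spanned by $\gamma_0(w)$, we get $S t_1(w) = \phi(w)\, \gamma_0(w)$ for a scalar function $\phi$ on $\Omega$; and since $w \mapsto S t_1(w)$ and $w \mapsto \gamma_0(w)$ are both holomorphic with $\gamma_0$ non-vanishing, $\phi$ is holomorphic on $\Omega$.

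Next I would argue that $\phi$ is not the zero function. If $\phi \equiv 0$ on $\Omega$, then $S$ annihilates $t_1(w)$ for all $w \in \Omega$; but $\{t_1(w) : w \in \Omega\}$ spans $\mathcal H_1$ by the defining property of $B_1(\Omega)$, so $S = 0$, contrary to hypothesis. Hence $\phi \not\equiv 0$, and being holomorphic its zero set is discrete in $\Omega$. Therefore for all $w$ in the open dense set $\Omega' = \Omega \setminus \phi^{-1}(0)$ we have $\gamma_0(w) = \phi(w)^{-1} S t_1(w) \in \operatorname{ran} S$. Consequently $\overline{\operatorname{ran} S} \supseteq \overline{\operatorname{span}}\{\gamma_0(w) : w \in \Omega'\}$. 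Finally I would note that this last span is all of $\mathcal H_0$: the sections $\gamma_0(w)$ for $w$ ranging over the full $\Omega$ span $\mathcal H_0$, and dropping a discrete set of points does not shrink the closed span, since $\gamma_0$ is holomorphic and a vector orthogonal to $\gamma_0(w)$ for all $w \in \Omega'$ would, by holomorphicity of $w \mapsto \langle x, \gamma_0(w)\rangle$ and the identity theorem, be orthogonal to $\gamma_0(w)$ for all $w \in \Omega$, hence zero. Thus $\operatorname{ran} S$ is dense in $\mathcal H_0$.

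The main obstacle, such as it is, is the verification that deleting the discrete zero set of $\phi$ does not diminish the closed linear span of the frame $\{\gamma_0(w)\}$; this is handled cleanly by the identity theorem applied to the scalar holomorphic functions $w \mapsto \langle x, \gamma_0(w)\rangle$, exactly as in the standard proof that the vectors $\ker(T_0-w)$ span $\mathcal H_0$. Everything else is either the elementary Hilbert space duality $\overline{\operatorname{ran} S} = (\ker S^*)^\perp$ or a direct consequence of the spanning property built into the Cowen-Douglas class.
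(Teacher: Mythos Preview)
Your proof is correct and follows essentially the same route as the paper's. The paper is somewhat terser: it simply notes that $S\circ\gamma$ is a holomorphic section of $E_{T_0}$, non-vanishing on some open subset $\Omega_0\subseteq\Omega$ (else $S=0$), and then invokes the fact that a non-vanishing holomorphic section over $\Omega_0$ already spans $\mathcal H_0$; your version makes the factorization $S t_1(w)=\phi(w)\gamma_0(w)$ and the identity-theorem argument explicit, and also spells out the standard duality $\overline{\operatorname{ran} S}=(\ker S^*)^\perp$, which the paper leaves to the reader.
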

\begin{proof}
Let $\gamma$ be a holomorphic frame of $E_{T_1}$. Assume that $S$ is a non zero operator. The intertwining relationship $T_0 S=S T_1$ implies that $S\circ\gamma$ is a section of $E_{T_0}$. Clearly, there exists an open set $\Omega_0$ contained in $\Omega$ such that $S\circ \gamma$ is not zero on $\Omega_0,$ otherwise $S$ has to be zero. Since $S(\gamma)$ is a holomorphic frame of $E_{T_0}$ on $\Omega_0,$  it follows that the closure of the linear span of the vectors  
$\{S(\gamma(w)):w\in\Omega_0\}$ must equal 
$\mathcal{H}_0.$ Hence the range of the operator $S$ is dense.
\end{proof}
The following Proposition provides several equivalent characterizations of operators in the class $\mathcal FB_2(\Omega).$
\begin{prop}\label{f}
Suppose $T$ is a bounded linear operator on a Hilbert space
$\mathcal{H},$ which is in ${B}_2(\Omega).$ Then the following
conditions are equivalent.
\begin{enumerate}
\item[(i)] There exist an orthogonal decomposition
$\mathcal{H}_0\oplus\mathcal{H}_1$ of $\mathcal{H}$ and operators
$T_0:\mathcal{H}_0\to\mathcal{H}_0,$
$T_1:\mathcal{H}_1\to\mathcal{H}_1,$ 
and $S:\mathcal{H}_1\to\mathcal{H}_0$ such that $T=\begin{pmatrix}
                                                  T_0 & S \\
                                                  0 & T_1 \\
                                                \end{pmatrix},$ where  $T_0,T_1\in {B}_1(\Omega)$ and $T_0S=ST_1$, that is, $T\in\mathcal{F}B_2(\Omega)$.
\item[(ii)] There exists a holomorphic frame
$\{\gamma_0,\gamma_1\}$ of $E_{T}$ such that
$\tfrac{\partial}{\partial
w}\|\gamma_0(w)\|^2=\langle\gamma_1(w),\gamma_0(w)\rangle$.
\item[(iii)] There exists a holomorphic frame
$\{\gamma_0,\gamma_1\}$ of $E_{T}$ such that $\gamma_0(w)$ and
$\tfrac{\partial}{\partial w}\gamma_0(w)-\gamma_1(w)$ are
orthogonal for all $w$ belong to $\Omega$.
\end{enumerate}
\end{prop}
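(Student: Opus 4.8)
The plan is to prove the cycle of implications (i) $\Rightarrow$ (ii) $\Rightarrow$ (iii) $\Rightarrow$ (i). The key computational tool throughout is the explicit form of a holomorphic frame for $E_T$ when $T$ is presented as an upper-triangular operator, exactly as constructed in the proof of Proposition \ref{prop2}: if $t_1$ is a non-vanishing holomorphic section of $E_{T_1}$ and $\gamma_0$ one for $E_{T_0}$, then $\gamma_1(w) = \alpha(w) + t_1(w)$, where $(T_0 - w)\alpha(w) = -S(t_1(w))$, gives the second frame vector. The real leverage of the hypothesis $T_0 S = S T_1$ is that it lets us choose $\alpha(w)$ canonically: differentiating $(T_0 - w)\gamma_0(w) = 0$ in $w$ gives $(T_0 - w)\tfrac{\partial}{\partial w}\gamma_0(w) = \gamma_0(w)$, and one checks that $S t_1(w)$ can be written in terms of $\gamma_0$ so that $\alpha(w) = \tfrac{\partial}{\partial w}\gamma_0(w)$ up to a holomorphic reparametrisation. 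This is the step I expect to be the main obstacle: pinning down, from $T_0 S = S T_1$ and Proposition \ref{dr} (density of the range of $S$), that the frame can be normalised so that $\gamma_1 = \tfrac{\partial}{\partial w}\gamma_0$ modulo $\ker(T_0-w)$, i.e. that $S t_1 = -(T_0-w)(\tfrac{\partial}{\partial w}\gamma_0)$ for a suitable choice of sections. Once that normalisation is in hand, (i) $\Rightarrow$ (ii) and (i) $\Rightarrow$ (iii) are short.

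For (i) $\Rightarrow$ (ii): with $\gamma_0$ a holomorphic section of $E_{T_0} \subseteq \mathcal H_0 \subseteq \mathcal H$, we have $\tfrac{\partial}{\partial w}\|\gamma_0(w)\|^2 = \langle \tfrac{\partial}{\partial w}\gamma_0(w), \gamma_0(w)\rangle$ since $\gamma_0$ is holomorphic (the antiholomorphic derivative of $\gamma_0$ vanishes, so only the first term survives). After the normalisation above, $\tfrac{\partial}{\partial w}\gamma_0(w) = \gamma_1(w) + (\text{vector in } \ker(T_0-w))$, but in fact one arranges $\tfrac{\partial}{\partial w}\gamma_0(w) - \gamma_1(w) \in \mathcal H_1$-component or orthogonal to $\gamma_0$; more precisely the component of $\tfrac{\partial}{\partial w}\gamma_0(w)$ along $\gamma_0(w)$ equals that of $\gamma_1(w)$, giving $\tfrac{\partial}{\partial w}\|\gamma_0(w)\|^2 = \langle \gamma_1(w), \gamma_0(w)\rangle$. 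For (ii) $\Rightarrow$ (iii): this is immediate algebra — $\langle \tfrac{\partial}{\partial w}\gamma_0(w) - \gamma_1(w), \gamma_0(w)\rangle = \tfrac{\partial}{\partial w}\|\gamma_0(w)\|^2 - \langle\gamma_1(w),\gamma_0(w)\rangle = 0$ by hypothesis, using again that $\tfrac{\partial}{\partial w}\|\gamma_0\|^2 = \langle\tfrac{\partial}{\partial w}\gamma_0,\gamma_0\rangle$.

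For the closing implication (iii) $\Rightarrow$ (i), I would run the argument backwards: let $\mathcal H_0$ be the closed span of $\{\gamma_0(w) : w \in \Omega\}$ and $\mathcal H_1 = \mathcal H_0^\perp$; then $\gamma_0$ frames a line sub-bundle $E_{T_0}$ and $T$ restricted to $\mathcal H_0$ is an operator $T_0 \in B_1(\Omega)$ (the span condition, holomorphy of $\gamma_0$, and surjectivity pass to the restriction). Writing $T = \big(\begin{smallmatrix} T_0 & S \\ 0 & T_1\end{smallmatrix}\big)$ with respect to $\mathcal H_0 \oplus \mathcal H_1$ (the lower-left block is $0$ because $\mathcal H_0$ is invariant), one gets $T_1 \in B_1(\Omega)$ from Proposition \ref{prop2}-type considerations on indices together with the fact that $\{\gamma_0,\gamma_1\}$ spans. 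The intertwining relation $T_0 S = S T_1$ is then forced by the orthogonality condition in (iii): compute $(T-w)\gamma_1(w) = 0$, decompose $\gamma_1(w)$ into its $\mathcal H_0$ and $\mathcal H_1$ parts, and use that the $\mathcal H_1$-part is a holomorphic frame $t_1$ of $E_{T_1}$ while the orthogonality $\langle \tfrac{\partial}{\partial w}\gamma_0 - \gamma_1, \gamma_0\rangle = 0$ identifies the $\mathcal H_0$-part of $\gamma_1$ with $\tfrac{\partial}{\partial w}\gamma_0$ up to $\ker(T_0 - w)$; feeding this back into $(T-w)\gamma_1 = 0$ yields $S t_1(w) = -(T_0-w)\tfrac{\partial}{\partial w}\gamma_0(w) = -\gamma_0(w)$, and differentiating/manipulating this identity in $w$ produces $T_0 S = S T_1$ on the range of $S$, hence everywhere by density.
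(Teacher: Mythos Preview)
Your overall strategy matches the paper's (prove the cycle (i)$\Rightarrow$(ii)$\Leftrightarrow$(iii)$\Rightarrow$(i)), and your (ii)$\Leftrightarrow$(iii) is exactly what the paper does. But two of the legs have real problems.

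\textbf{(i)$\Rightarrow$(ii).} You correctly sense that the content lies in normalising the frame so that $\partial_w\gamma_0-\gamma_1\in\mathcal H_1$, but you never actually produce the normalisation. The paper does it in one move: since $T_0S=ST_1$, the vector $S(t_1(w))$ lies in $\ker(T_0-w)$, so one simply \emph{defines} $\gamma_0(w):=S(t_1(w))$ (after multiplying $t_0$ by the holomorphic function $\psi$ with $S t_1=\psi t_0$) and then sets $\gamma_1(w):=\partial_w\gamma_0(w)-t_1(w)$. A direct check gives $(T-w)\gamma_1(w)=\gamma_0(w)-S(t_1(w))=0$, and the orthogonality $\langle t_1(w),\gamma_0(w)\rangle=0$ is automatic because $t_1(w)\in\mathcal H_1$ and $\gamma_0(w)\in\mathcal H_0$. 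What you flagged as ``the main obstacle'' dissolves once you choose $\gamma_0=S(t_1)$; Proposition~\ref{dr} is not needed here.

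\textbf{(iii)$\Rightarrow$(i).} Here your setup diverges from the paper and runs into trouble. You take $\mathcal H_1=\mathcal H_0^\perp$; but hypothesis~(iii) only gives the \emph{pointwise} orthogonality $t_1(w)\perp\gamma_0(w)$, not $t_1(w)\perp\gamma_0(w')$ for $w\neq w'$. So there is no reason for $t_1(w)=\partial_w\gamma_0(w)-\gamma_1(w)$ to lie in your $\mathcal H_1$, and once you decompose $\gamma_1=(\gamma_1)_0+(\gamma_1)_1$ the orthogonality tells you $\partial_w\gamma_0-(\gamma_1)_0$ is \emph{orthogonal} to $\gamma_0(w)$, not that it lies in $\ker(T_0-w)$ --- the opposite of what you claim. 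The paper instead defines $\mathcal H_1$ as the closed span of $\{t_1(w):w\in\Omega\}$, so that $t_1$ is tautologically a section there, and then works to show $\mathcal H=\mathcal H_0\oplus\mathcal H_1$ and $T_0,T_1\in B_1(\Omega)$. The latter is not a one-liner: the paper uses an index argument plus a contradiction to rule out $\dim\ker(T_1-w)=2$, and a separate surjectivity check for $T_0-w$. You wave at ``Proposition~\ref{prop2}-type considerations,'' but that proposition goes in the other direction (from $T_0,T_1\in B_1$ to $T\in B_2$). Finally, once $S(t_1(w))=\gamma_0(w)$ is in hand, the intertwining $T_0S=ST_1$ follows immediately by evaluating both sides on $t_1(w)$ --- no differentiation is needed, and the ``density'' you invoke should be density of $\{t_1(w)\}$ in the \emph{domain} $\mathcal H_1$, not of the range of $S$.
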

\begin{proof}
$(i)\Longrightarrow(ii)\!\!:$ Pick any two non-vanishing holomorphic sections $t_0$ and $t_1$ for the line bundles $E_{T_0}$ and $E_{T_1}$ respectively. Then 
\begin{eqnarray*}(T-w)t_1(w)&=&
(T_1-w)t_1(w)+S(t_1(w))\\
&=& S(t_1(w))\\
&=& \psi(w)t_0(w)
\end{eqnarray*}
for some holomorphic function $\psi$ defined on $\Omega.$ Setting 
$\gamma_0(w):=\psi(w)t_0(w)$ and
$\gamma_1(w):=\tfrac{\partial}{\partial w}\gamma_0(w)-t_1(w)$, we
see that 
%\begin{eqnarray*}(T-w)(\gamma_0(w))&=& (T_0-w)(\gamma_0(w))\\
%&=&\psi(w)(T_0-w)(t_0(w))\\
%&=&0
%\end{eqnarray*}
%and
%\begin{eqnarray*}
%(T-w)(\gamma_1(w))&=& (T-w)\tfrac{\partial}{\partial
%w}\gamma_0(w)-
%(T-w)(t_1(w))\\
%&=& \gamma_0(w)-\gamma_0(w)\\
%&=&0.
%\end{eqnarray*}
%Thus 
$\{\gamma_0(w),\gamma_1(w)\}\subset\ker\,(T-w)$. Now assume that 
\begin{eqnarray}\label{e6}\alpha_0\gamma_0(w)+\alpha_1\gamma_1(w)=0\end{eqnarray}
for a pair of complex numbers $\alpha_0$ and $\alpha_1.$ Then 
\begin{eqnarray}
0&=&\langle\alpha_0\gamma_0(w)+\alpha_1\gamma_1(w),t_1(w)\rangle\nonumber\\
&=&\alpha_1\langle\gamma_1(w),t_1(w)\rangle\nonumber\\
&=&-\alpha_1\|t_1(w)\|^2\label{e7}.
\end{eqnarray}
From equations (\ref{e6}) and (\ref{e7}), it follows that
$\alpha_0=\alpha_1=0$. Thus $\{\gamma_0,\gamma_1\}$ is a
holomorphic frame of $E_T$.
% Now,
%\begin{eqnarray*}
%0&=&\langle t_1(w),\gamma_0(w)\rangle\\
%&=&\langle \tfrac{\partial}{\partial w}\gamma_0(w)-\gamma_1(w),\gamma_0(w)\rangle\\
%&=& \tfrac{\partial}{\partial w}\|\gamma_0(w)\|^2-
%\langle\gamma_1(w),\gamma_0(w)\rangle.
%\end{eqnarray*}
Since $\langle t_1(w),\gamma_0(w)\rangle=0,$ 
%differentiating with respect to $w,$ 
we see that 
$$ \tfrac{\partial}{\partial w}\|\gamma_0(w)\|^2= \langle\gamma_1(w),\gamma_0(w)\rangle.$$

$(ii)\Longleftrightarrow(iii)\!:$ This equivalence is evident from the definition.

$(iii)\Longrightarrow (i)\!\!:$  Set $t_1(w):=\tfrac{\partial}{\partial
w}\gamma_0(w)-\gamma_1(w).$  Let $\mathcal{H}_0$ and $\mathcal{H}_1$ be the closed linear span of $\{\gamma_0(w):w\in\Omega\}$ and
$\{t_1(w):w\in\Omega\},$ respectively.
Set $T_0=T_{|\mathcal{H}_0}$, $T_1=P_{\mathcal{H}_1}T_{|\mathcal{H}_1}$ and $S=P_{\mathcal{H}_0}T_{|\mathcal{H}_1}.$

%\noindent{{\bf Claim} (1):-} 
We see that the closed linear span of the vectors 
$\{\gamma_0(w), t_1(w): w\in\Omega\}$ is $\mathcal{H}:$
Suppose $x$ in $\mathcal{H}$ is orthogonal to this set of vectors. Then clearly,  $x\bot \gamma_0(w)$ and $x\bot
t_1(w)$ for all $w$ in $\Omega.$ Or, equivalently 
$x\bot\gamma_0(w)$ and $x\bot\gamma_1(w)$ for all $w$ 
in $\Omega.$ Therefore $x$ must be the $0$ vector.
Next, we show that the two operators $T_0$ and $T_1$ are in 
$B_1(\Omega).$
%\noindent{{\bf Claim} (2):-} 

%Suppose $y\in\mathcal{H}_1$, there exist
%$\tilde{z}$ such that $(T-w)\tilde{z}=y$ which is equivalent to
%$$[(T_0-w)\tilde{z}_{\mathcal{H}_0}+S(\tilde{z}_{\mathcal{H}_1})]+(T_1-w)\tilde{z}_{\mathcal{H}_1}=y.$$
%Hence we have $(T_1-w)\tilde{z}_{\mathcal{H}_1}=y$, that is ,
Clearly, $(T_1-w)$ is onto. Thus $\mbox{index}\,(T_1-w)=\dim~\ker\,(T_1-w)$
and
$2=\mbox{index}\,(T-w)=\mbox{index}\,(T_0-w)+\mbox{index}\,(T_1-w).$
It follows that $\dim~\ker(T_1-w)=1\;\mbox{or}\;2 $.  

Suppose $\dim~\ker\,(T_1-w)=2$ and $\{s_1(w),s_2(w)\}$ be a holomorphic choice of linearly independent vectors in $\ker\,(T_1-w)$. Then we can find holomorphic functions
$\phi_1,\phi_2$ defined on $\Omega$ such that $S(s_1(w))=\phi_1(w)\gamma_0(w)$ and
$S(s_2(w))=\phi_2(w)\gamma_0(w).$ Setting 
$\tilde{\gamma}_0(w):=\gamma_0(w),\tilde{\gamma}_1(w):=\frac{\partial}{\partial
w}(\phi_1(w)\gamma_0(w))-s_1(w)$ and
$\tilde{\gamma}_2(w):=\frac{\partial}{\partial
w}(\phi_2(w)\gamma_0(w))-s_2(w),$ we see that 
$(T-w)(\tilde{\gamma}_i(w))=0$ for $0\leq i\leq 2.$ If $\sum_{i=0}^2\alpha_i \tilde{\gamma}_i(w)=0,$ $\alpha_i \in\mathbb C,$ then  \begin{eqnarray*}
\alpha_0 \gamma_0(w)+\tfrac{\partial}{\partial
w}\big((\alpha_1
\phi_1(w)+\alpha_2\phi_2(w))\gamma_0(w)\big)+\alpha_1
s_1(w)+\alpha_2 s_2(w) = 0.
\end{eqnarray*}
It follows that $\alpha_1 s_1(w)+\alpha_2 s_2(w)=0$ since $\mathcal{H}_0$ is orthogonal to $\mathcal{H}_1.$ Hence 
$\alpha_1=\alpha_2=0$ implying $\alpha_0=0.$ Thus we have
$\dim~\ker(T-w)\geq 3.$ This contradiction proves that $\dim \ker (T_0-w) =1$ and hence $T_1$ is in $B_1(\Omega).$

To show that $T_0$ is in $B_1(\Omega),$ pick any $x\in\mathcal{H}_0$, and note that there exist $z\in\mathcal{H}$ such that $(T-w)z=x$ since $T-w$ is onto. Let $z_{\mathcal{H}_1}$ and $z_{\mathcal{H}_0}$ be the projections of $z$ to the subspaces $\mathcal H_0$ and $\mathcal H_1,$ respectively. We have 
$[(T_0-w)z_{\mathcal{H}_0}+S(z_{\mathcal{H}_1})]+(T_1-w)z_{\mathcal{H}_1}=x.$
Therefore $(T_1-w)z_{\mathcal{H}_1}=0$ and
$(T_0-w)z_{\mathcal{H}_0}+S(z_{\mathcal{H}_1})=x$. Since
$\mbox{dim}\,\ker\,(T_1-w)=1\,$, so $z_{\mathcal{H}_1}=c_1t_1(w),$
it follows that 
\begin{eqnarray*}x&=&(T_0-w)z_{\mathcal{H}_0}+S(z_{\mathcal{H}_1})\\
&=&(T_0-w)z_{\mathcal{H}_0}+S(c_1t_1(w))\\
&=&(T_0-w)z_{\mathcal{H}_0}+c_1\gamma_0(w)\\
&=&(T_0-w)z_{\mathcal{H}_0}+(T_0-w)(c_1\tfrac{\partial}{\partial w}\gamma_0(w))\\
&=&((T_0-w)(z_{\mathcal{H}_0}+c_1\tfrac{\partial}{\partial
w}\gamma_0(w)).
\end{eqnarray*}
Thus $T_0-w$ is onto. We have $2=\dim~\ker\;(T-w)=
\dim~\ker\;(T_0-w)+\dim~\ker\;(T_1-w).$  Hence $\dim~\ker\;(T_0-w)=1$ and we see that $T_0$ is in $B_1(\Omega).$

Finally, since $S(t_1(w))= \gamma_0(w),$  it
follows that $T_0S=ST_1$.
\end{proof}

\subsection{\large \sf Models for operators in $\mathcal FB_2(\Omega)$} An operator $T\in \mathcal{F}B_2(\Omega)$ is also in $B_2(\Omega),$ therefore as is well-known (cf. \cite{cd, CS}), it can be realized as the adjoint of a multiplication operator on some reproducing kernel
Hilbert space of holomorphic $\mathbb{C}^2$-valued functions. These functions are defined on $\Omega^*:=\{w:\bar{w} \in \Omega\}.$ 
An explicit description for operators in $\mathcal FB_2(\Omega)$ follows.

Let $E_{T}$ be the holomorphic Hermitian  vector
bundle over $\Omega$ corresponding to the operator $T.$   Since $T$ is in $\mathcal FB_2(\Omega),$ we may find a holomorphic frame $\gamma=\{\gamma_0,\gamma_1\}$ such that  $\gamma_0(w)$ and $\tfrac{\partial}{\partial w}\gamma_0(w)-\gamma_1(w)$ are orthogonal for all $w$ in $\Omega.$  Define  $\Gamma:\mathcal{H}\to
\mathcal{O}(\Omega^*,\mathbb{C}^2)$ as follows:
$$ \Gamma(x)(z)=\big( \langle x,\gamma_0(\bar z)\rangle ,
\langle x,\gamma_{1}(\bar z)\rangle\big)^{\rm tr}\;\;\;\;\;\;\;
z\in\Omega^*,\;x\in\mathcal{H},
$$
where $\mathcal{O}(\Omega^*,\mathbb{C}^2)$ is the space of
holomorphic functions defined on $\Omega^*$ which take values in
$\mathbb{C}^2$. Here $(\,\,\cdot\,\, , \,\, \cdot \,\,)^{\rm tr}$ denotes the transpose of the vector $(\,\,\cdot\,\,, \,\,\cdot\,\,).$ 

The map $\Gamma$ is injective and therefore transplanting the inner product from
$\mathcal{H}$ on the range of $\Gamma,$ we make it unitary
from $\mathcal{H}$ onto $\mathcal{H}_{\Gamma}:=\mbox{ran}\, \Gamma.$ Define
$K_{\Gamma}$ to be the function on $\Omega^*\times \Omega^*$ 
taking values in the $2\times 2$ matrices
$\mathcal{M}_2(\mathbb{C}):$
\renewcommand\arraystretch{1.275}
\begin{eqnarray}\label{canonicalKGamma}
K_{\Gamma}(z,w)&=& \big(\!\big(\langle \gamma_j(\bar
w),\gamma_i(\bar z)\rangle\big)\!\big)_{i,j=0}^{1}\nonumber \\
&=& \begin{pmatrix}
  \langle \gamma_0(\bar w),\gamma_0(\bar z)\rangle &
  \frac{\partial}{\partial \bar w}\langle \gamma_0(\bar w),\gamma_0(\bar z)\rangle \nonumber\\
  \frac{\partial}{\partial z} \langle \gamma_0(\bar w),\gamma_0(\bar z)\rangle
  &
 \frac{\partial^2}{\partial z\partial \bar w} \langle \gamma_0(\bar w),
 \gamma_0(\bar z)\rangle+\langle t_1(\bar w),t_1(\bar z)\rangle \\
\end{pmatrix}\\
&=&
\begin{pmatrix}
  K_0(z,w) & \frac{\partial}{\partial \bar w}K_0(z,w) \\
  \frac{\partial}{\partial z}K_0(z,w) & \frac{\partial^2}{\partial z\partial \bar w}K_0(z,w) \\
\end{pmatrix}+ \begin{pmatrix}
 0 & 0 \\
 0 & K_1(z,w) \\
\end{pmatrix},
\end{eqnarray}
\renewcommand\arraystretch{1}
where $t_1(\bar w)=\tfrac{\partial}{\partial \bar w}\gamma_0(\bar
w)-\gamma_1(\bar w)$, $K_0(z,w)=\langle \gamma_0(\bar
w),\gamma_0(\bar z)\rangle$ and $K_1(z,w)= \langle t_1(\bar
w),t_1(\bar z)\rangle$ for $z,w\in\Omega^*$. Set
$(K_{\Gamma})_w(\cdot)=K_{\Gamma}(\cdot,w)$. It is then easily
verified that $K_\Gamma$ has the following properties:
\begin{enumerate}
\item The reproducing property: $\langle\Gamma(x)(\cdot),(K_{\Gamma})_w(\cdot)\eta\rangle_{\rm{ran}\:\Gamma} = \langle
\Gamma(x)(w),\eta\rangle_{\mathbb{C}^2},\,x\in\mathcal{H},$ $\eta\in\mathbb{C}^2,$ $w\in\Omega^*.$
\item The unitary operator $\Gamma$ intertwines the operators $T$ defined on $\mathcal H$ and $M^*$ defined on $\mathcal H_\Gamma,$ namely, $\Gamma T^*= M_z \Gamma.$
\item Each $w$ in $\Omega$ is an eigenvalue with eigenvector $(K_{\Gamma})_{\bar{w}}(\cdot)\eta,$ $\eta \in \mathbb C^2,$ for the operator $M^*=\Gamma T\Gamma^*.$ 
\end{enumerate}

\subsection{\large \sf Rigidity} Once we represent an operator $T$ from $\mathcal FB_2(\Omega)$ in this form, the  possibilities for the change of frame are limited. The admissible ones are described in the following lemma.  
\begin{lem} \label{lut} Let $T$ be an operator in $\mathcal{F}B_2(\Omega).$ Suppose  $\{\gamma_0,\gamma_1\}$, $\{\tilde{\gamma}_0,\tilde{\gamma}_1\}$ are two
frames of the vector bundle $E_T$ such that  $\gamma_0(w)\bot
(\tfrac{\partial}{\partial w}\gamma_0(w)-\gamma_1(w))$ and
$\tilde{\gamma}_0(w) \bot (\tfrac{\partial}{\partial
w}\tilde{\gamma}_0(w)-\tilde{\gamma}_1(w))$ for all $w\in \Omega.$
If  $\phi=\begin{pmatrix}
          \phi_{11} & \phi_{12} \\
         \phi_{21} & \phi_{22} \\
        \end{pmatrix}$ is any change of frame between $\{\g_0,\g_1\}$ and $\{\tilde{\g}_0,\tilde{\g}_1\}$, that
        is,
$$\{\tilde{\g}_0,\tilde{\g}_1\}=\{\g_0,\g_1\}\begin{pmatrix}
          \phi_{11} & \phi_{12} \\
         \phi_{21} & \phi_{22} \\
        \end{pmatrix},$$
then $\phi_{21}=0,\,\phi_{11}=\phi_{22}$ and
$\phi_{12}=\phi_{11}^{\prime}.$
\end{lem}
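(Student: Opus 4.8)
The plan is to use the defining relation $\gamma_0(w) \perp (\tfrac{\partial}{\partial w}\gamma_0(w) - \gamma_1(w))$ — equivalently, by part (ii) of Proposition \ref{f}, the identity $\tfrac{\partial}{\partial w}\|\gamma_0(w)\|^2 = \langle \gamma_1(w), \gamma_0(w)\rangle$ — together with the analogous identities for the tilde frame, and then feed in the change-of-frame equations $\tilde\gamma_0 = \phi_{11}\gamma_0 + \phi_{21}\gamma_1$, $\tilde\gamma_1 = \phi_{12}\gamma_0 + \phi_{22}\gamma_1$. The first move is to set $t_1(w) = \tfrac{\partial}{\partial w}\gamma_0(w) - \gamma_1(w)$ and $\tilde t_1(w) = \tfrac{\partial}{\partial w}\tilde\gamma_0(w) - \tilde\gamma_1(w)$; these are non-vanishing holomorphic sections of the sub-bundle $E_{T_1}$ (as in the proof of Proposition \ref{f}), and crucially $t_1(w) \perp \gamma_0(w)$, $\tilde t_1(w) \perp \tilde\gamma_0(w)$ for all $w$.

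Next I would compute $\tilde t_1$ in terms of the untilded frame. Differentiating $\tilde\gamma_0 = \phi_{11}\gamma_0 + \phi_{21}\gamma_1$ and subtracting $\tilde\gamma_1 = \phi_{12}\gamma_0 + \phi_{22}\gamma_1$ gives
\[
\tilde t_1 = \phi_{11}'\gamma_0 + \phi_{11}\gamma_0' + \phi_{21}'\gamma_1 + \phi_{21}\gamma_1' - \phi_{12}\gamma_0 - \phi_{22}\gamma_1.
\]
Rewriting $\gamma_0' = t_1 + \gamma_1$ and collecting terms, one expresses $\tilde t_1$ as a combination of $\gamma_0$, $\gamma_1$ (or $t_1$), and $\gamma_1'$. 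Now take the inner product of $\tilde t_1$ with $\tilde\gamma_0 = \phi_{11}\gamma_0 + \phi_{21}\gamma_1$ and set it equal to zero. Here I would use two orthogonality facts repeatedly: $\langle \gamma_1, \gamma_0\rangle = \tfrac{\partial}{\partial w}\|\gamma_0\|^2 = \overline{\langle \gamma_0, \gamma_1\rangle}$ from Proposition \ref{f}(ii), and $\langle t_1, \gamma_0\rangle = 0$, together with the conjugate-linearity of the inner product in the second slot when differentiating. The idea is that this single scalar equation, holding identically on $\Omega$, together with the holomorphicity of all the $\phi_{ij}$, should already force $\phi_{21} = 0$: the term involving $\|\gamma_1\|^2$ or $\langle \gamma_1', \gamma_1 \rangle$ cannot be cancelled by anything else since $\gamma_0, \gamma_1$ are a frame and the curvature of $E_{T_1}$ is non-trivial. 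Once $\phi_{21}=0$, the relation $\tilde\gamma_0 = \phi_{11}\gamma_0$ makes everything much cleaner: $\tilde t_1 = \phi_{11} t_1$ (one checks $\phi_{11}\gamma_0' + \phi_{11}'\gamma_0 - \phi_{12}\gamma_0 - \phi_{22}\gamma_1 = \phi_{11}(t_1 + \gamma_1) + \phi_{11}'\gamma_0 - \phi_{12}\gamma_0 - \phi_{22}\gamma_1$), and comparing the $\gamma_1$-coefficient forces $\phi_{22} = \phi_{11}$, while comparing the $\gamma_0$-coefficient forces $\phi_{12} = \phi_{11}'$.

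The step I expect to be the main obstacle is the first one — rigorously extracting $\phi_{21}=0$ from the orthogonality identity. The delicate point is that $\langle \tilde t_1, \tilde\gamma_0\rangle = 0$ mixes holomorphic coefficients $\phi_{ij}, \phi_{ij}'$ with the (non-holomorphic) Hermitian quantities $\langle \gamma_i^{(a)}, \gamma_j^{(b)}\rangle$; to conclude one must argue that the only way an identity $\sum (\text{holo}) \cdot (\text{Hermitian pairing}) \equiv 0$ can hold is if the relevant coefficient vanishes, which uses that $\{\gamma_0(w), \gamma_1(w)\}$ spans the fibre and, implicitly, that $T$ genuinely lies in $\mathcal FB_2(\Omega)$ with $S \ne 0$ so that the sub-bundle $E_{T_1}$ is present. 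An alternative, perhaps cleaner, route is to pass to the kernel-function picture: the reproducing kernel $K_\Gamma$ of \eqref{canonicalKGamma} has the rigid $2\times 2$ form with $(1,1)$-entry $K_0$, $(1,2)$-entry $\bar\partial_w K_0$, $(2,1)$-entry $\partial_z K_0$, and $(2,2)$-entry $\partial_z\bar\partial_w K_0 + K_1$; a change of holomorphic frame transforms $K_\Gamma$ to $\phi(z)^{*}{}^{-1} \cdots$ — more precisely the two kernels are related by $\widetilde K_\Gamma(z,w) = \Phi(\bar z) K_\Gamma(z,w) \Phi(\bar w)^{*}$ for the holomorphic matrix $\Phi = \phi^{\mathrm{tr}}$ — and matching the block structure entry by entry yields the same conclusion, with the $(1,1)$ and $(2,1)$ entries pinning down $\phi_{21}$ and then the rest. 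I would carry out the direct frame computation as the primary argument and mention the kernel computation as a cross-check.
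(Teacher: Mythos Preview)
Your ``alternative, perhaps cleaner, route'' via the reproducing kernel is in fact exactly the paper's proof, not a cross-check. The paper models each of the two frames by its kernel $K_\Gamma$, $K_{\tilde\Gamma}$ of the rigid form \eqref{canonicalKGamma}, writes the change of frame on the column vectors $s_i(w)=K_\Gamma(\cdot,w)e_i$ and $\tilde s_i(w)=K_{\tilde\Gamma}(\cdot,w)e_i$, and then equates the two $\mathbb C^2$-coordinates separately. From $\tilde s_0=\phi_{00}s_0+\phi_{10}s_1$ the first coordinate gives $(\tilde K_0)_w=\phi_{00}(K_0)_w+\phi_{10}\bar\partial_w(K_0)_w$; applying $\partial_z$ and comparing with the second coordinate forces $\phi_{10}(K_1)_w\equiv 0$, hence $\phi_{10}=0$. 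The remaining identities $\phi_{00}=\phi_{11}$ and $\phi_{01}=\phi_{00}'$ then drop out of $\tilde s_1=\phi_{01}s_0+\phi_{11}s_1$ by the same coordinate-matching. So you already had the right argument; just promote it to primary.

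Your direct frame computation, as written, has two gaps. The first you flagged yourself: the single scalar identity $\langle\tilde t_1,\tilde\gamma_0\rangle=0$ mixes four holomorphic unknowns with several non-holomorphic Hermitian pairings, and there is no evident mechanism to isolate $\phi_{21}$ from it. The second gap is in the step you describe as ``cleaner'': once $\phi_{21}=0$ you write
\[
\tilde t_1=\phi_{11}t_1+(\phi_{11}-\phi_{22})\gamma_1+(\phi_{11}'-\phi_{12})\gamma_0
\]
and then ``compare coefficients'' to conclude the last two terms vanish. But that comparison presupposes $\tilde t_1$ is a scalar multiple of $t_1$, which is not given; all you know is $\tilde t_1\perp\tilde\gamma_0=\phi_{11}\gamma_0$, and since $\gamma_1$ is not orthogonal to $\gamma_0$ this yields only the single relation $(\phi_{11}-\phi_{22})\,\partial\|\gamma_0\|^2+(\phi_{11}'-\phi_{12})\|\gamma_0\|^2=0$. (This can in fact be pushed through by applying $\bar\partial$ and invoking the strict negativity of the curvature of $E_{T_0}$, but that argument is absent from your sketch.) The cleanest repair, if you want to stay on the direct side, is to note that $\phi_{21}=0$ gives $\tilde{\mathcal H}_0=\overline{\mathrm{span}}\{\tilde\gamma_0(w)\}=\overline{\mathrm{span}}\{\gamma_0(w)\}=\mathcal H_0$, hence $\tilde t_1(w)\in\tilde{\mathcal H}_1=\mathcal H_1$, so $\tilde t_1=\psi t_1$ for some $\psi$; applying $(T-w)$ to both sides gives $\psi=\phi_{11}$, and only then does the coefficient comparison become legitimate.
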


\begin{proof}

Define the unitary map $\Gamma,$ as above, using the holomorphic frame $\gamma=\{\gamma_0,\gamma_1\}.$ The operator $T$ is then unitarily equivalent to the adjoint of the multiplication operator on the Hilbert space $\mathcal H_\Gamma$ possessing a reproducing kernel $K_\Gamma$ of the form \eqref{canonicalKGamma}.  
%
%$$K_{\Gamma}(z,w)=
%\begin{pmatrix}
%  K_0(z,w) & \frac{\partial}{\partial \bar w}K_0(z,w) \\
%  \frac{\partial}{\partial z}K_0(z,w) & \frac{\partial^2}{\partial z\partial \bar w}K_0(z,w) \\
%\end{pmatrix}+ \begin{pmatrix}
% 0 & 0 \\ 
% 0 & K_1(z,w) \\
%\end{pmatrix}.
%$$
Let $e_1$ and $e_2$ be the standard unit vectors in $\mathbb C^2.$  Clearly, ${(K_{\Gamma})}_w(\cdot)e_1$ and ${(K_{\Gamma})}_w(\cdot)e_2$ are two linearly independent eigenvectors of $M^*$ with eigenvalue $\bar{w}.$
%
%$${(K_{\gamma})}_w(\cdot)e_1=\begin{pmatrix}
% {(K_0)}_w(\cdot) \\
%\frac{\partial}{\partial z}{(K_0)}_w(\cdot)
% \end{pmatrix}
% $$
%
% and
%
% $$ {(K_{\gamma})}_w(\cdot)e_2=\begin{pmatrix}
% \frac{\partial}{\partial \bar w}{(K_0)}_w(\cdot) \\ \frac{\partial^2}{\partial z\partial \bar w}{(K_0)}_w(\cdot)+{(K_1)}_w(\cdot)
% \end{pmatrix}$$ are eigenvector of $M_z^*$ with eigenvalue $\bar
% w$.

Similarly, corresponding to the holomorphic frame $\tilde{\gamma}=\{\tilde{\gamma_0},\tilde{\gamma_1}\},$ the operator $T$ is unitarily equivalent to the 
adjoint of multiplication operator on the Hilbert space $\mathcal{H}_{\tilde{\Gamma}}.$ The reproducing kernel $K_{\tilde{\Gamma}}$ is again of the form \eqref{canonicalKGamma} except that $K_0$ and $K_1$ must be replaced by $\tilde{K}_0$ and $\tilde{K}_1,$ respectively.  

For $i=0,1,$ set $s_i(w) := (K_\Gamma)(w) e_i,$ and $\tilde{s}_i(w) := (K_{\tilde{\Gamma}})(w) e_i.$
%of $\mathbb{C}^2$-valued analytic functions defined on $\Omega^*$
%with reproducing kernel $K_{\tilde{\gamma}}$ by the map
%$\Gamma_{\tilde{\gamma}}$. Reproducing kernel $K_{\tilde{\gamma}}$
%for the Hilbert space $\mathcal{H}_{\tilde{\gamma}}$ is,
%
%
%
%$$(K_{\tilde{\gamma}})_w(\cdot)=K_{\tilde{\gamma}}(\cdot,w)=
%\begin{pmatrix}
%  {(\tilde{K}_0)}_w(\cdot) & \frac{\partial}{\partial \bar w}{(\tilde{K}_0)}_w(\cdot) \\
%  \frac{\partial}{\partial z}{(\tilde{K}_0)}_w(\cdot) &
%  \frac{\partial^2}{\partial z\partial \bar w}{(\tilde{K}_0)}_w(\cdot)+{(\tilde{K}_1)}_w(\cdot) \\
%\end{pmatrix}
%$$
%
%
% Set,
%$$s_1(w)={(K_{\gamma})}_w(\cdot)e_1=\begin{pmatrix}
% {(K_0)}_w(\cdot) \\ \frac{\partial}{\partial z}{(K_0)}_w(\cdot)
% \end{pmatrix},$$
%
% $$s_2(w)= {(K_{\gamma})}_w(\cdot)e_2=\begin{pmatrix}
% \frac{\partial}{\partial \bar w}{(K_0)}_w(\cdot) \\ \frac{\partial^2}{\partial z\partial \bar w}{(K_0)}_w(\cdot)+{(K_1)}_w(\cdot)
% \end{pmatrix},$$
%
% $$\tilde{s}_1(w)={(\tilde{K}_{\gamma})}_w(\cdot)e_1=\begin{pmatrix}
% {(\tilde{K}_0)}_w(\cdot) \\ \frac{\partial}{\partial z}{(\tilde{K}_0)}_w(\cdot)
% \end{pmatrix},$$
% and
%
% $$\tilde{s}_2(w)= {(\tilde{K}_{\gamma})}_w(\cdot)e_2=\begin{pmatrix}
% \frac{\partial}{\partial \bar w}{(\tilde{K}_0)}_w(\cdot) \\ \frac{\partial^2}
% {\partial z\partial \bar w}{(\tilde{K}_0)}_w(\cdot)+{(\tilde{K}_1)}_w(\cdot)
% \end{pmatrix}.$$
Let $\phi(w):=
\begin{pmatrix}
  \phi_{00}(w) & \phi_{01}(w) \\
  \phi_{10}(w) & \phi_{11}(w) \\
\end{pmatrix}
$ be the holomorphic function, taking values in $2\times 2$ matrices,  such that
$$(\tilde{s}_0(w),\tilde{s}_1(w))=(s_0(w),s_1(w))\phi(w).$$
This implies that
\begin{eqnarray}\label{eut1}\tilde{s}_0(w)=\phi_{00}(w)s_0(w)+\phi_{10}(w)s_1(w)\end{eqnarray} and
\begin{eqnarray}\label{eut2}\tilde{s}_1(w)=\phi_{01}(w) s_0(w)+\phi_{11}(w)s_1(w).\end{eqnarray}
From Equation (\ref{eut1}), equating the first and the second coordinates separately, we have 
\begin{eqnarray}\label{eut3}(\tilde{K}_0)_w(\cdot)=\phi_{00}(w)
(K_0)_w(\cdot)+\phi_{10}(w)\tfrac{\partial}{\partial \bar w}
(K_0)_w(\cdot)\end{eqnarray} 
and
\begin{eqnarray}\label{eut4}\tfrac{\partial}{\partial z}(\tilde{K_0})_w(\cdot)=
\phi_{00}(w)\tfrac{\partial}{\partial
z}(K_0)_w(\cdot)+\phi_{10}(w)\tfrac{\partial^2}{\partial z\partial
\bar w} (K_0)_w(\cdot)+ \phi_{10}(w)(K_1)_w(\cdot).\end{eqnarray}
From these two equations, we get
$$\phi_{00}(w)\tfrac{\partial}{\partial
z}(K_0)_w(\cdot)+\phi_{10}(w)\tfrac{\partial^2}{\partial z\partial
\bar w} (K_0)_w(\cdot)=\phi_{00}(w)\tfrac{\partial}{\partial
z}(K_0)_w(\cdot)+\phi_{10}(w)\tfrac{\partial^2}{\partial z\partial
\bar w} (K_0)_w(\cdot)+ \phi_{10}(w)(K_1)_w(\cdot),$$ which
implies that $\phi_{10}=0.$
Finally, from Equation (\ref{eut2}), we have 
\begin{eqnarray}\label{eut5}
\tfrac{\partial}{\partial \bar
w}(\tilde{K}_0)_w(\cdot)=\phi_{01}(w)
(K_0)_w(\cdot)+\phi_{11}(w)\tfrac{\partial}{\partial \bar w}
(K_0)_w(\cdot)
\end{eqnarray}
The Equations (\ref{eut2}) and (\ref{eut5}) together give
$$\phi_{01}=\phi_{00}^{\prime}\;\;\mbox{and} \;\;\phi_{00}=\phi_{11}$$
completing the proof.
\end{proof}
A very important consequence of this Lemma is that the decomposition of the operators in the class $\mathcal FB_2(\Omega)$ is unique in the sense described in the following proposition. 
\begin{prop}\label{mainp} Let $T,\tilde{T}\in \mathcal{F}B_2(\Omega)$ be two operators of of the form $\Big (\begin{smallmatrix} T_0 & S \\
0 & T_1
\end{smallmatrix} \Big )$ and $\Big (\begin{smallmatrix} \tilde{T}_0 & \tilde{S} \\
0 & \tilde{T}_1
\end{smallmatrix} \Big )$  with respect to the decomposition  $\mathcal H = \mathcal H_0 \oplus \mathcal H_1$ and $\tilde{\mathcal H} = \tilde{\mathcal H}_0 \oplus \tilde{\mathcal H}_1,$ respectively. 
Let  $U=\Big (\begin{smallmatrix}U_{11} & U_{12} \\
U_{21} & U_{22} \\
\end{smallmatrix}\Big ):\mathcal H_0 \oplus \mathcal H_1 \to \tilde{\mathcal H}_0 \oplus \tilde{\mathcal H}_1$ be an unitary operator such that
$$\begin{pmatrix}U_{11} & U_{12} \\
U_{21} & U_{22} \\
\end{pmatrix}\begin{pmatrix}T_0 & S \\
0 & T_1 \\
\end{pmatrix}=\begin{pmatrix}\tilde{T}_0 & \tilde{S} \\
0 & \tilde{T}_1 \\
\end{pmatrix}\begin{pmatrix}U_{11} & U_{12} \\
U_{21} & U_{22} \\
\end{pmatrix},$$ then $U_{12}=U_{21}=0$.
\end{prop}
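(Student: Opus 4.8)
The plan is to translate the unitary intertwining relation into the language of frames of the holomorphic Hermitian bundles $E_T$ and $E_{\tilde T}$, and then invoke Lemma \ref{lut}. Concretely, since $T,\tilde T\in\mathcal FB_2(\Omega)$ are given in the standard block form with $T_0S=ST_1$ and $\tilde T_0\tilde S=\tilde S\tilde T_1$, we first fix the canonical frames: pick non-vanishing holomorphic sections $t_0,t_1$ of $E_{T_0},E_{T_1}$, set $\gamma_0(w):=\psi(w)t_0(w)$ where $S(t_1(w))=\psi(w)t_0(w)$, and $\gamma_1(w):=\tfrac{\partial}{\partial w}\gamma_0(w)-t_1(w)$ exactly as in the proof of Proposition \ref{f}; then $\{\gamma_0,\gamma_1\}$ is a holomorphic frame of $E_T$ with $\gamma_0(w)\bot(\tfrac{\partial}{\partial w}\gamma_0(w)-\gamma_1(w))$. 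Do the same on the $\tilde T$ side to obtain $\{\tilde\gamma_0,\tilde\gamma_1\}$ for $E_{\tilde T}$.

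Next I would use the fact that a unitary $U$ with $UT=\tilde TU$ maps $\ker(T-w)$ onto $\ker(\tilde T-w)$ for every $w\in\Omega$, and does so holomorphically; hence $U\gamma_0,U\gamma_1$ is a holomorphic frame of $E_{\tilde T}$. Therefore there is a holomorphic matrix-valued function $\phi=\big(\begin{smallmatrix}\phi_{11}&\phi_{12}\\\phi_{21}&\phi_{22}\end{smallmatrix}\big)$ with $\{U\gamma_0,U\gamma_1\}=\{\tilde\gamma_0,\tilde\gamma_1\}\phi$, i.e. $U\gamma_0=\phi_{11}\tilde\gamma_0+\phi_{21}\tilde\gamma_1$ and $U\gamma_1=\phi_{12}\tilde\gamma_0+\phi_{22}\tilde\gamma_1$. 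But both frames are of the special ``$\mathcal FB_2$'' type, and $U$ is unitary, so $\|U\gamma_0(w)\|=\|\gamma_0(w)\|$ and more generally all the Gram-matrix entries are preserved; this says precisely that the change of frame $\phi$ is an \emph{isometry between the two canonical reproducing kernels} $K_\Gamma$ and $K_{\tilde\Gamma}$ of the form \eqref{canonicalKGamma}. Lemma \ref{lut} (whose proof only used the structure of kernels of this form, not that the two frames live over the same operator) then forces $\phi_{21}=0$, $\phi_{11}=\phi_{22}$, and $\phi_{12}=\phi_{11}'$.

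Finally I would unwind what $\phi_{21}=0$ means for the block structure of $U$. Since $\gamma_0$ spans $\mathcal H_0$, $t_1=\tfrac{\partial}{\partial w}\gamma_0-\gamma_1$ spans $\mathcal H_1$, and likewise $\tilde\gamma_0$ spans $\tilde{\mathcal H}_0$, $\tilde t_1$ spans $\tilde{\mathcal H}_1$, the relations $U\gamma_0=\phi_{11}\tilde\gamma_0$ (using $\phi_{21}=0$) show that $U$ maps $\mathcal H_0$ into $\tilde{\mathcal H}_0$, i.e. $U_{21}=0$. For $U_{12}=0$ one computes $U t_1 = U\big(\tfrac{\partial}{\partial w}\gamma_0-\gamma_1\big) = \tfrac{\partial}{\partial w}(\phi_{11}\tilde\gamma_0) - (\phi_{12}\tilde\gamma_0+\phi_{22}\tilde\gamma_1) = \phi_{11}'\tilde\gamma_0 + \phi_{11}\tfrac{\partial}{\partial w}\tilde\gamma_0 - \phi_{12}\tilde\gamma_0 - \phi_{22}\tilde\gamma_1$, and since $\phi_{12}=\phi_{11}'$ and $\phi_{22}=\phi_{11}$, this collapses to $\phi_{11}\big(\tfrac{\partial}{\partial w}\tilde\gamma_0 - \tilde\gamma_1\big) = \phi_{11}\tilde t_1 \in \tilde{\mathcal H}_1$; hence $U$ maps $\mathcal H_1$ into $\tilde{\mathcal H}_1$, giving $U_{12}=0$. (Since $U$ is unitary and the diagonal blocks are already surjective onto their targets by irreducibility/spanning, no additional argument is needed.)

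The main obstacle I anticipate is the bookkeeping in the second paragraph: one must be careful that Lemma \ref{lut} genuinely applies, since as literally stated it is about two frames of a \emph{single} bundle $E_T$. The honest fix is to observe that the proof of Lemma \ref{lut} only uses (a) that each frame gives a reproducing kernel of the matrix form \eqref{canonicalKGamma}, and (b) that the change-of-frame map is implemented by a unitary — and here the unitary is $U$ rather than the identity. Equivalently, one can compose $\Gamma$ and $\tilde\Gamma$ with $U$ to reduce to a genuine change of frame on $\mathcal H_\Gamma$, after which Lemma \ref{lut} applies verbatim. Everything else is routine linear algebra with holomorphic sections and the already-established spanning properties of $\{\gamma_0,t_1\}$.
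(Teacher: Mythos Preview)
Your proof is essentially identical to the paper's first proof of this proposition: fix canonical frames, push $\{\gamma_0,\gamma_1\}$ through $U$ to get a second frame of $E_{\tilde T}$ with the same orthogonality property, apply Lemma~\ref{lut}, and read off $U_{21}=U_{12}=0$ from $U\gamma_0=\phi\tilde\gamma_0$ and $Ut_1=\phi\tilde t_1$. Your flagged ``obstacle'' is a non-issue: since $\{U\gamma_0,U\gamma_1\}$ and $\{\tilde\gamma_0,\tilde\gamma_1\}$ are both frames of the \emph{same} bundle $E_{\tilde T}$, and the unitarity of $U$ transports the orthogonality $\gamma_0\perp t_1$ to $U\gamma_0\perp Ut_1=\tfrac{\partial}{\partial w}(U\gamma_0)-U\gamma_1$, Lemma~\ref{lut} applies verbatim---no extension or re-examination of its proof is required.
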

\begin{proof}
Let $\{\gamma_0,\gamma_1\}$ and
$\{\tilde{\gamma_0},\tilde{\gamma_1}\}$ be holomorphic frames of
$E_{T}$ and $E_{\tilde T}$ respectively with the property that
$\gamma_0\perp (\tfrac{\partial}{\partial w}\gamma_0-\gamma_1)$
and $\tilde{\gamma}_0\perp (\tfrac{\partial}{\partial
w}\tilde{\gamma}_0-\tilde{\gamma}_1)$. Set
$t_1:=(\tfrac{\partial}{\partial w}\gamma_0-\gamma_1)$ and
$\tilde{t}_1:= (\tfrac{\partial}{\partial
w}\tilde{\gamma}_0-\tilde{\gamma}_1)$. Since $U$ intertwines $T$
and $\tilde T$, it follows that $\{U\gamma_0,U\gamma_1\}$ is a second holomorphic frame of $E_{\tilde T}$ with the property
$U\gamma_0\perp (\tfrac{\partial}{\partial
w}(U\gamma_0)-U\gamma_1)= U(t_1)$.  By Lemma \ref{lut}, we have
that 
\begin{eqnarray}\label{etd1}U(\gamma_0)=\phi\tilde{\gamma_0}\end{eqnarray} and
\begin{eqnarray}\label{etd2}U(\gamma_1)= \phi^{\prime}\tilde{\gamma}_0+\phi
\tilde{\gamma}_1.\end{eqnarray}
From equations \eqref{etd1} and \eqref{etd2}, we get
\begin{eqnarray}\label{etd3}U(t_1)=\phi\,\tilde{t}_1.\end{eqnarray} From equations \eqref{etd1}
and \eqref{etd3}, it follows that $U$ maps $\mathcal{H}_0$ to
$\mathcal{H}_0$  and $\mathcal{H}_1$ to $\mathcal{H}_1.$ Thus
$U$ is a block diagonal from $\mathcal{H}_0\oplus\mathcal{H}_1$ onto $\tilde{\mathcal{H}}_0\oplus\tilde{\mathcal{H}}_1$
\end{proof}
\begin{rem}\label{frame}
In summary, we note that 
%$\tfrac{\partial}{\partial w}\gamma_0(w)-\gamma_1(w)$ 
a holomorphic change of frame for the vector bundle  $E_T,$ 
preserving the orthogonality relation between $\gamma_0$ and $\tfrac{\partial}{\partial w}\gamma_0(w)-\gamma_1(w),$ must be of the form $\Big (\begin{smallmatrix}\varphi & \varphi^\prime\\ 0& \varphi\end{smallmatrix}\Big ).$ 
Thus such a change of frame for  the vector bundle $E_T$ induces change of frame $\Big (\begin{smallmatrix}\varphi&0\\0&\varphi\end{smallmatrix}\Big )$ 
for the vector bundle $E_{\big (\begin{smallmatrix}T_0 & 0\\ 0&T_1\end{smallmatrix}\big )}$ and vice-versa. 
\end{rem}
\begin{cor}For $i=0,1,$ let $T_i$ be any two operators in ${B}_1(\Omega)$. Let $S$ and
$\tilde{S}$ be bounded linear operators such that $T_0 S=S T_1$ and $T_0\tilde{S}=\tilde{S}T_1.$ If
 $T=\Big (\begin{smallmatrix}
 T_0 & S \\
 0 & T_1 \\
 \end{smallmatrix} \Big )$ and  $\tilde T=\Big (\begin{smallmatrix}
 T_0 & \tilde{S} \\
 0 & T_1 \\
 \end{smallmatrix} \Big ),$ then $T$ is unitarily equivalent to $\tilde T$ if and only if $\tilde S= e^{i\theta}S$ for some real number $\theta.$ 
\end{cor}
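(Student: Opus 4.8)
The plan is to deduce the corollary from Proposition~\ref{mainp} together with the observation that any unitary intertwining two operators of the stated special form (same $T_0$ and same $T_1$, only $S$ changed) must be block diagonal, and then to pin down exactly what the diagonal blocks can be. First I would apply Proposition~\ref{mainp} to $T$ and $\tilde T$: since both lie in $\mathcal FB_2(\Omega)$ and are written with respect to the decompositions $\mathcal H_0\oplus\mathcal H_1$ and (the same) $\mathcal H_0\oplus\mathcal H_1$, any unitary $U$ with $U T = \tilde T U$ has $U_{12}=U_{21}=0$, so $U=\mathrm{diag}(U_0,U_1)$ with $U_0$ unitary on $\mathcal H_0$ and $U_1$ unitary on $\mathcal H_1$. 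Feeding this back into the intertwining identity gives $U_0 T_0 = T_0 U_0$, $U_1 T_1 = T_1 U_1$, and the off-diagonal relation $U_0 S = \tilde S U_1$.

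Next I would use that $T_0$ and $T_1$ are in $B_1(\Omega)$, hence irreducible, so their commutants are trivial. Concretely, $U_0$ commutes with $T_0\in B_1(\Omega)$: choosing a holomorphic frame $\gamma_0$ of $E_{T_0}$, the relation $U_0 T_0 = T_0 U_0$ forces $U_0\gamma_0(w)$ to be a holomorphic section of $E_{T_0}$ (eigenvectors go to eigenvectors since $T_0 U_0 \gamma_0(w) = U_0 T_0 \gamma_0(w) = w\,U_0\gamma_0(w)$), so $U_0\gamma_0 = f\,\gamma_0$ for some holomorphic $f$ on $\Omega$; unitarity of $U_0$ gives $|f(w)|\,\|\gamma_0(w)\| = \|\gamma_0(w)\|$, hence $|f|\equiv 1$, and a non-constant modulus-one holomorphic function is impossible, so $f\equiv e^{i\theta_0}$ and $U_0 = e^{i\theta_0}I$ on $\mathcal H_0$ (using that the $\gamma_0(w)$ span $\mathcal H_0$). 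The identical argument gives $U_1 = e^{i\theta_1}I$ on $\mathcal H_1$. Substituting into $U_0 S = \tilde S U_1$ yields $e^{i\theta_0}S = e^{i\theta_1}\tilde S$, i.e. $\tilde S = e^{i(\theta_0-\theta_1)}S = e^{i\theta}S$ with $\theta = \theta_0-\theta_1$.

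For the converse, if $\tilde S = e^{i\theta}S$ then the unitary $U=\mathrm{diag}(e^{i\theta}I_{\mathcal H_0}, I_{\mathcal H_1})$ (or, symmetrically, $\mathrm{diag}(I_{\mathcal H_0}, e^{-i\theta}I_{\mathcal H_1})$) manifestly satisfies $U T = \tilde T U$: the $(0,0)$ and $(1,1)$ entries match since $U$ is scalar on each block and commutes with $T_0$ and $T_1$, and the $(0,1)$ entry reads $e^{i\theta}S = \tilde S\cdot 1$, which holds. Thus $T$ and $\tilde T$ are unitarily equivalent.

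The main obstacle — though it is modest given what precedes — is the invocation of Proposition~\ref{mainp} to get block-diagonality: one must check that the hypotheses of that proposition are genuinely met here, namely that $T$ and $\tilde T$ really are in $\mathcal FB_2(\Omega)$ (which requires $S,\tilde S\neq 0$; if one allows $S=0$ the statement degenerates and one should note that case separately, or simply assume $S\neq 0$ as in the definition of the class) and that both are presented with respect to the \emph{same} ambient decomposition, so that a single unitary $U$ on $\mathcal H_0\oplus\mathcal H_1$ is what intertwines them. Everything after that is the standard ``rigidity of $B_1(\Omega)$'' computation with holomorphic frames and the maximum modulus principle, so no real difficulty remains.
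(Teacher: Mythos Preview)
Your proof is correct and follows essentially the same route as the paper: apply Proposition~\ref{mainp} to force $U$ to be block diagonal, read off the three relations, and conclude that each diagonal block is a unimodular scalar. The only cosmetic differences are that the paper obtains $U_{11}=e^{i\theta_1}I$ by invoking irreducibility of $T_0$ (so $\{T_0,T_0^*\}'=\mathbb C I$) whereas you unfold this via the holomorphic frame and the maximum modulus principle, and the paper chooses the symmetric intertwiner $\mathrm{diag}(e^{i\theta/2}I,e^{-i\theta/2}I)$ for the converse rather than your $\mathrm{diag}(e^{i\theta}I,I)$.
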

\begin{proof}
Suppose that  $UT=\tilde{T} U$ for some unitary operator $U.$  We have just shown that such an operator $U$ must be diagonal, say $U=\Big (\begin{smallmatrix}
 U_{11}& 0 \\
 0 & U_{22} \\
 \end{smallmatrix}\Big ).$ Hence we have  
 \begin{eqnarray}\label{ecmp1}
 U_{11}T_0=T_0U_{11},\,\, U_{22}T_1=T_1U_{22},\,\, U_{11}S=\tilde{S}U_{22}.
 \end{eqnarray}
Since $U_{11}$ is unitary, the first of the equations (\ref{ecmp1}) implies that
$$U_{11}\in \{T_0,T_0^*\}^{\prime}:=\{W\in
\mathcal{L}(\mathcal{H}_0): WT_0=T_0W
\;\mbox{and}\;WT_0^*=T_0^*W\}.$$ Since $T_0$ is an irreducible
operator,  we conclude that $U_{11}=e^{i\theta_1}I_{\mathcal{H}_0}$ for
some $\theta_1\in\mathbb{R}.$ Similarly,
$U_{22}=e^{i\theta_2}I_{\mathcal{H}_1}$ for some
$\theta_2\in\mathbb{R}.$ Hence the third equation in (\ref{ecmp1}) implies that
$\tilde{S}=e^{i(\theta_1-\theta_2)}S$.

Conversely suppose that $\tilde{S}=e^{i\theta}S$ for some real
number $\theta$. Then evidently the operator $U:=\left ( \begin{smallmatrix}
 \exp\big (i\tfrac{\theta}{2}\big )I_0& 0 \\
 0 &  \exp\big (-i\tfrac{\theta}{2}\big )I_1 \\
 \end{smallmatrix} \right )$ is unitary on $\mathcal{H}=\mathcal{H}_0\oplus
 \mathcal{H}_1$ and $UT=\tilde{T}U.$
\end{proof}
\begin{cor}
For $i=0,1,$ let $T_i$ be two operators in ${B}_1(\Omega).$ Let $S$ be a non-zero bounded linear operators such that $T_0S=ST_1.$ If 
$T_{\mu}=\Big (\begin{smallmatrix}
 T_0 & \mu S\\
 0 & T_1 \\
 \end{smallmatrix}\Big )$ and $T_{\tilde{\mu}}=\big (\begin{smallmatrix}
 T_0 & \tilde{\mu} S\\
 0 & T_1 \\
 \end{smallmatrix}\Big ),$ $\mu,\,\tilde{\mu} > 0,$ then  $T_{\mu}$ is unitarily equivalent to $ T_{\tilde {\mu}}$ if and only if $\mu=\tilde {\mu}$.
\end{cor}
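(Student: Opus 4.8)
The plan is to deduce this corollary directly from the preceding one. First I would record the elementary observation that, for any $\mu > 0$, the operator $\mu S$ is again a \emph{non-zero} bounded operator from $\mathcal H_1$ to $\mathcal H_0$ intertwining $T_0$ and $T_1$: indeed $S\ne 0$ implies $\mu S\ne 0$, and $T_0 S = S T_1$ immediately gives $T_0(\mu S) = (\mu S) T_1$. Hence $T_\mu$ and $T_{\tilde\mu}$ are genuinely operators in $\mathcal FB_2(\Omega)$ of the shape $\Big (\begin{smallmatrix} T_0 & \,\cdot\, \\ 0 & T_1\end{smallmatrix}\Big )$ to which the previous Corollary applies, with the ``$S$'' there taken to be $\mu S$ and the ``$\tilde S$'' taken to be $\tilde\mu S$.

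Next, invoking that Corollary, $T_\mu$ is unitarily equivalent to $T_{\tilde\mu}$ if and only if $\tilde\mu S = e^{i\theta}\mu S$ for some real number $\theta$. Since $S \ne 0$ we may cancel $S$ to obtain $\tilde\mu = e^{i\theta}\mu$. Because $\mu,\tilde\mu > 0$, comparing absolute values (equivalently, both sides are positive reals) forces $e^{i\theta}=1$, and therefore $\mu = \tilde\mu$.

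The converse is immediate: if $\mu = \tilde\mu$ then $T_\mu = T_{\tilde\mu}$, so they are unitarily equivalent via the identity operator.

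I do not anticipate any real obstacle: the entire content is already carried by the previous Corollary, and the only point deserving a word of justification is the trivial remark that multiplying $S$ by a positive scalar keeps it a non-zero intertwiner, so that the hypotheses of that Corollary are indeed met and its conclusion can be specialized to $\tilde S = \tilde\mu S$, $S \rightsquigarrow \mu S$.
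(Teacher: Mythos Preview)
Your proposal is correct and is exactly the intended argument: the paper states this corollary without proof, immediately after the previous corollary, because it is a direct specialization of that result with $S\rightsquigarrow \mu S$, $\tilde S\rightsquigarrow \tilde\mu S$, followed by the observation that $\tilde\mu=e^{i\theta}\mu$ with $\mu,\tilde\mu>0$ forces $\mu=\tilde\mu$.
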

\subsection{\large \sf A complete set of unitary invariants} The following theorem lists a complete set of unitary invariants for operators in $\mathcal FB_2(\Omega).$
\begin{thm}\label{maint}
Suppose that $T=\Big (\begin{smallmatrix}T_0 & S \\
0 & T_1 \\
\end{smallmatrix}\Big )$
and $\tilde{T}=\Big (\begin{smallmatrix}\tilde{T}_0 & \tilde S \\
0 & \tilde{T}_1 \\
\end{smallmatrix}\Big )$ are any two operators in $\mathcal FB_2(\Omega).$ 
Then the operators $T$ and $\tilde{T}$ are unitarily equivalent if and only if $\mathcal{K}_{T_1}=\mathcal{K}_{\tilde{T}_1}$ (or,
$\mathcal{K}_{T_0}=\mathcal{K}_{\tilde{T}_0}$) and
$\frac{\|S(t_1)\|^2}{\|t_1\|^2}= \frac{\|\tilde
S(\tilde{t}_1)\|^2}{\|\tilde{t}_1\|^2},$ where $t_1$ and $\tilde{t}_1$ are non-vanishing holomorphic sections for the  vector bundles $E_{T_1}$ and $E_{\tilde{T}_1},$ respectively.
\end{thm}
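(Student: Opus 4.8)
The plan is to prove necessity using the rigidity result Proposition~\ref{mainp}, and sufficiency by assembling an intertwining unitary out of two intertwiners on the summands; the identity $\gamma_0=St_1$ coming from the canonical frame of Proposition~\ref{f} is the glue.

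\emph{Necessity.} If $UT=\tilde TU$ for a unitary $U,$ then by Proposition~\ref{mainp} the operator $U=\big(\begin{smallmatrix}U_0&0\\0&U_1\end{smallmatrix}\big)$ is block diagonal, with $U_0,U_1$ unitary, $U_0T_0=\tilde T_0U_0,$ $U_1T_1=\tilde T_1U_1$ and $U_0S=\tilde SU_1.$ The first two relations say $T_i$ and $\tilde T_i$ are unitarily equivalent operators in $B_1(\Omega),$ so the line bundles $E_{T_i}$ and $E_{\tilde T_i}$ are equivalent and hence $\mathcal K_{T_i}=\mathcal K_{\tilde T_i}$ for $i=0,1$ (the curvature being a complete unitary invariant in rank one). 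Since $U_1$ maps $\ker(T_1-w)$ onto $\ker(\tilde T_1-w),$ we have $U_1t_1=\chi\tilde t_1$ for a non-vanishing holomorphic $\chi$ on $\Omega,$ whence $\|t_1\|^2=|\chi|^2\|\tilde t_1\|^2$ and $\|St_1\|^2=\|U_0St_1\|^2=\|\tilde SU_1t_1\|^2=|\chi|^2\|\tilde S\tilde t_1\|^2;$ dividing yields $\tfrac{\|St_1\|^2}{\|t_1\|^2}=\tfrac{\|\tilde S\tilde t_1\|^2}{\|\tilde t_1\|^2}.$

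\emph{Sufficiency.} Both hypotheses are insensitive to the choice of non-vanishing sections, since replacing $t_1$ by $ft_1$ scales $\|St_1\|^2$ and $\|t_1\|^2$ by the common factor $|f|^2.$ By Proposition~\ref{f} fix a canonical holomorphic frame $\{\gamma_0,\gamma_1\}$ of $E_T$ built from sections $t_0,t_1$ of $E_{T_0},E_{T_1};$ recall from that construction that $\gamma_0=St_1,$ $t_1=\tfrac{\partial}{\partial w}\gamma_0-\gamma_1,$ and that the curvatures are $\mathcal K_{T_0}=-\tfrac{\partial^2}{\partial w\partial\bar w}\log\|\gamma_0\|^2$ and $\mathcal K_{T_1}=-\tfrac{\partial^2}{\partial w\partial\bar w}\log\|t_1\|^2.$ From $\mathcal K_{T_1}=\mathcal K_{\tilde T_1}$ we obtain a unitary $V\colon\mathcal H_1\to\tilde{\mathcal H}_1$ with $VT_1=\tilde T_1V;$ as $Vt_1$ is a non-vanishing holomorphic section of $E_{\tilde T_1},$ we may \emph{take} $\tilde t_1:=Vt_1$ and set $\tilde\gamma_0:=\tilde S\tilde t_1.$ Then $\|\tilde t_1\|=\|t_1\|$ and, using the ratio hypothesis for these choices of section, $\|\tilde\gamma_0\|^2=\|\tilde S\tilde t_1\|^2=\tfrac{\|St_1\|^2}{\|t_1\|^2}\|\tilde t_1\|^2=\|St_1\|^2=\|\gamma_0\|^2;$ hence $\mathcal K_{T_0}=\mathcal K_{\tilde T_0},$ so there is a unitary $W_0\colon\mathcal H_0\to\tilde{\mathcal H}_0$ with $W_0T_0=\tilde T_0W_0.$ Writing $W_0\gamma_0=g\tilde\gamma_0$ with $g$ holomorphic, we find $|g(w)|=\|\gamma_0(w)\|/\|\tilde\gamma_0(w)\|=1$ for all $w,$ so $g$ is a unimodular constant and $W:=\bar gW_0$ is a unitary intertwiner of $T_0$ and $\tilde T_0$ with $W\gamma_0=\tilde\gamma_0.$

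Now put $U:=\big(\begin{smallmatrix}W&0\\0&V\end{smallmatrix}\big)\colon\mathcal H_0\oplus\mathcal H_1\to\tilde{\mathcal H}_0\oplus\tilde{\mathcal H}_1.$ It is unitary and intertwines the diagonal blocks by construction, so $UT=\tilde TU$ reduces to $WS=\tilde SV.$ These are bounded operators $\mathcal H_1\to\tilde{\mathcal H}_0$ that agree on every $t_1(w)$---indeed $WSt_1(w)=W\gamma_0(w)=\tilde\gamma_0(w)=\tilde S\tilde t_1(w)=\tilde SVt_1(w)$---and $\{t_1(w):w\in\Omega\}$ spans $\mathcal H_1$ because $T_1\in B_1(\Omega);$ therefore $WS=\tilde SV.$ I expect the only genuinely delicate point to be this compatibility step---choosing $V$ and $W$ so that the off-diagonal entries also match---and the mechanism that makes it go through is precisely the relation $\gamma_0=St_1$ encoded in the canonical frame, which pins down $S$ on the total family $\{t_1(w)\}.$ Finally, the parenthetical ``or'' in the statement is accounted for by the identity $\mathcal K_{T_0}=\mathcal K_{T_1}-\tfrac{\partial^2}{\partial w\partial\bar w}\log\tfrac{\|St_1\|^2}{\|t_1\|^2},$ which shows that either curvature, together with the ratio, determines the other.
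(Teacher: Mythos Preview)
Your proof is correct. Necessity is argued exactly as in the paper, via Proposition~\ref{mainp}. For sufficiency you take a genuinely different route: the paper builds a holomorphic isometric bundle map $\Phi\colon E_T\to E_{\tilde T}$ (defined by $\Phi(\gamma_0)=\phi\tilde\gamma_0$, $\Phi(\gamma_1)=\phi'\tilde\gamma_0+\phi\tilde\gamma_1$), checks all fiberwise inner products are preserved, and then invokes the Cowen--Douglas theorem (Theorem~\ref{eq}) to promote the bundle equivalence to a unitary equivalence of operators. You instead construct the intertwining unitary directly as a block diagonal $U=\mathrm{diag}(W,V)$, obtaining $V$ and $W$ from the rank-one curvature criterion, normalizing $W$ by a unimodular constant so that $W\gamma_0=\tilde\gamma_0$, and then verifying the single off-diagonal relation $WS=\tilde SV$ on the spanning family $\{t_1(w)\}$ via $\gamma_0=St_1$.

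Your approach avoids the rank-two Cowen--Douglas theorem altogether and is in fact the method the paper itself adopts later in Theorem~\ref{uinvs} for $\widetilde{\mathcal F}B_n(\Omega)$; the paper's bundle-map argument, on the other hand, makes the geometric content (an explicit frame-level isometry of $E_T$ with $E_{\tilde T}$) more visible. One small cosmetic remark: as in the paper, the sections $St_1$ and $\tilde S\tilde t_1$ are only guaranteed to be non-vanishing after possibly shrinking $\Omega$; you use this implicitly when you treat $\gamma_0$ and $\tilde\gamma_0$ as frames.
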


\begin{proof}

By working on a sufficiently small open subset of $\Omega$, we can assume that $S(t_1)$
and $\tilde S(\tilde{t}_1)$ are holomorphic frames of the bundle
$E_{T_0}$ and $E_{\tilde{T}_0},$ respectively. First suppose that
$\bar{\partial}\partial\log\,\|S(t_1)\|^2=\bar{\partial}\partial\log\,\|\tilde
S(\tilde{t}_1)\|^2$ and $\frac{\|S(t_1)\|^2}{\|t_1\|^2}=
\frac{\|\tilde S(\tilde{t}_1)\|^2}{\|\tilde{t}_1\|^2}.$ 
Then we claim that $T$ and $\tilde{T}$ are unitarily equivalent.
The equality of the curvatures, namely, $\bar{\partial}\partial\log\,\|S(t_1)\|^2=\bar{\partial}\partial\log\,\|\tilde
S(\tilde{t}_1)\|^2$ implies that $\|S(t_1)\|^2=|\phi|^2\|\tilde
S(\tilde{t}_1)\|^2$ for some non-vanishing holomorphic function $\phi$ on $\Omega.$ It may be that we have to shrink, without loss of generality, to a smaller open set $\Omega_0.$ The second of our assumptions gives 
$\|t_1\|^2=|\phi|^2\|\tilde{t}_1\|^2.$ Let
$\gamma_0(w):=S(t_1(w))$ and $\tilde{\gamma}_0(w):=\tilde
S(\tilde{t}_1(w));$ $\gamma_1(w):=\frac{\partial}{\partial
w}\gamma_0(w)-t_1(w)$ and
$\tilde{\gamma}_1(w):=\frac{\partial}{\partial
w}\tilde{\gamma}_0(w)-\tilde{t}_1(w).$ It follows that $\{\gamma_0,
\gamma_1\}$ and $\{\tilde{\gamma}_0, \tilde{\gamma}_1\}$ are
holomorphic frames of $E_T$ and $E_{\tilde T},$ respectively.
Define the map $\Phi:E_T\to E_{\tilde{T}}$ as follows:
\begin{enumerate}
\item $\Phi(\gamma_0(w))=\phi(w)\tilde{\gamma}_0(w),$ 
\item $\Phi(\gamma_1(w))=\phi^{\prime}(w)\tilde{\g}_0(w)+\phi(w)\tilde{\gamma}_1(w).$
\end{enumerate}
Clearly, $\Phi$ is holomorphic. Note that \begin{eqnarray*}
\langle\Phi(\gamma_0(w)),\Phi(\gamma_1(w))\rangle&=&\langle\phi(w)\tilde{\gamma_0}(w),\phi^{\prime}(w)\tilde{\gamma_0}(w)+\phi(w)\tilde{\gamma}_1(w)\rangle\\
&=& \langle\phi(w)\tilde{\gamma_0}(w),\phi^{\prime}(w)\tilde{\gamma_0}(w)+\phi(w)(\tfrac{\partial}{\partial w}\tilde \gamma_0(w)-\tilde {t}_1(w))\rangle\\
&=&\langle\phi(w)\tilde{\gamma_0}(w),\tfrac{\partial}{\partial w}(\phi(w)\tilde{\gamma}_0(w))-\phi(w)\tilde{t}_1(w)\rangle\\
&=&\tfrac{\partial}{\partial \bar{w}}\|\phi(w)\tilde{\gamma}_0(w)\|^2\\
&=&\tfrac{\partial}{\partial \bar{w}}\|{\gamma}_0(w)\|^2
\end{eqnarray*}
and \begin{eqnarray*}
\langle\gamma_0(w),\gamma_1(w)\rangle &=& \langle\gamma_0(w),\tfrac{\partial}{\partial w}\gamma_0(w)-t_1(w)\rangle\\
&=&\tfrac{\partial}{\partial \bar{w}}\|{\gamma}_0(w)\|^2.
\end{eqnarray*}
Hence we have
$\langle\Phi(\gamma_0(w)),\Phi(\gamma_1(w))\rangle=\langle\gamma_0(w),\gamma_1(w)\rangle$.
Similarly, $\|\Phi(\gamma_0(w))\|=\|\gamma_0(w)\|$ and
$\|\Phi(\gamma_1)\|=\|\gamma_1\|$.
Thus $E_{T}$ and $E_{\tilde T}$ are equivalent 
as  holomorphic Hermitian vector bundles. Hence $T$ 
and $\tilde T$ are unitarily equivalent by Theorem \ref{eq} of Cowen and Douglas.

Conversely, suppose $T$ and $\tilde{T}$ are unitarily equivalent.
Let $U:\mathcal{H}\to \tilde{\mathcal{H}}$ be the unitary map such
that
$UT=\tilde{T}U$. By proposition \ref{mainp}, $U$ takes the form $\Big (\begin{smallmatrix}U_{1} & 0 \\
0& U_2 \\
\end{smallmatrix} \Big )$ for some pair of unitary operators $U_1$ and $U_2.$ Hence we have $U_1(S(t_1))= \phi_1(\tilde S(\tilde t_1))$ and $U_2 t_1=\phi_2 \tilde t_1$.
The intertwining relation $U_1S=\tilde S U_2$ implies that $\phi_1=\phi_2$. Thus
$\mathcal{K}_{T_0}=\mathcal{K}_{\tilde{T_0}}$ and
\begin{eqnarray*}
\frac{\|S(t_1)\|^2}{\|t_1\|^2}&=& \frac{\|U_1(S(t_1))\|^2}{\|U_2(t_1)\|^2}
= \frac{\|\phi_1\tilde S(\tilde t_1)\|^2}{\|\phi_2\tilde t_1\|^2}
= \frac{\|\tilde S(\tilde t_1)\|^2}{\|\tilde t_1\|^2}.
\end{eqnarray*}
This verification completes the proof.
\end{proof}
\subsection{\large\sf The second fundamental form}
We relate the invariants of Theorem \ref{maint} to the second fundamental form of the inclusion $E_0\subseteq E.$  The computation of the second fundamental form is given below  following \cite[page. 2244]{dm}. Here 
$E_0,$ is the line bundle corresponding to the operator $T_0$ and $E$ is the vector bundle of rank $2$ corresponding to the operator $T$ in $\mathcal FB_2(\Omega).$ 
Let  $\{\gamma_0,\gamma_1\}$ be a holomorphic frame for $E$ such that $\gamma_0$ and $t_1:=\partial\gamma_0-\gamma_1$ are orthogonal. One obtains an orthonormal frame, say, $\{e_0,e_1\}$, from the holomorphic frame $\{\gamma_0,\gamma_1\}$ by the usual Gram-Schmidt process -- Set $h = \langle\gamma_0,\gamma_0 \rangle,$ and observe that 
\begin{eqnarray*}e_1=h^{-1/2}\gamma_0,\,\, e_2=\frac{\gamma_1-\frac{\gamma_0\langle\gamma_1,\gamma_0\rangle}
{\|\gamma_0\|^2}}{(\|\gamma_1\|^2-\frac{|\langle\gamma_1,\gamma_0\rangle|^2}
{\|\gamma_0\|^2})^{1/2}}\end{eqnarray*}
are orthogonal. The canonical hermitian connection $D$ for the vector bundle $E_T$ is given, in terms of $e_1$ and $e_2$ by the formula: 
\begin{eqnarray*}
D\,e_1&=& D^{1,0}e_1+D^{0,1}e_1\\
&=&\alpha_{11} e_1+\alpha_{21}e_2+\bar{\partial} e_1\\
&=&(\alpha_{11}-\bar{\partial}(\log h))e_1+ \alpha_{21} e_2\\
&=&\theta_{11} e_1+\theta_{21}e_2,
\end{eqnarray*}
where $\alpha_{11},\alpha_{21}$ are $(1,0)$ forms to be determined. Similarly, we have
\begin{eqnarray*}
D\;e_2 &=& D^{1,0}e_2+D^{0,1}e_2\\
&=&\alpha_{12} e_1+\alpha_{22}e_2+\bar{\partial} e_2\\
&=&\left(\alpha_{12}- h^{1/2}\frac{\bar{\partial}(h^{-1}\langle\gamma_2,\gamma_1\rangle)}
{(\|\gamma_2\|^2-\frac{|\langle\gamma_2,\gamma_1\rangle|^2}
{\|\gamma_1\|^2})^{1/2}}\right)e_1+\left(\alpha_{22} -\frac{1}{2}\frac{\bar{\partial}(\|\gamma_2\|^2-\frac{\langle \gamma_2,\gamma_1\rangle}{\|\gamma_1\|^2})}
{(\|\gamma_2\|^2-\frac{\langle \gamma_2,\gamma_1\rangle}{\|\gamma_1\|^2})}\right)
 e_2\\
 &=&\theta_{12} e_1+\theta_{22}e_2,
\end{eqnarray*}
where $\alpha_{12},\alpha_{22}$ are $(1,0)$ forms to be determined. Since we are working with
an orthonormal frame, the compatibility of the connection with the Hermitian metric gives
\begin{eqnarray*}
\langle D\,e_i,e_j\rangle+\langle e_i,D\,e_j\rangle &=& \theta_{ji}+\bar{\theta}_{ij}\\
&=& 0\;\;\;\;\;\mbox{for}\; \;\;\;1\leq i,j\leq 2.
\end{eqnarray*}
For $1\leq i,j\leq 2$, equating $(1,0)$ and $(0,1)$ forms separately to zero in the equation 
$\theta_{ij}+\bar{\theta}_{ji}=0$, we obtain $\alpha_{11}=\partial(\log h)$, $\alpha_{12}=0$,
$\alpha_{21}= h^{1/2}\frac{\partial(h^{-1}\langle\gamma_1,\gamma_0\rangle)}
{(\|\gamma_1\|^2-\frac{|\langle\gamma_1,\gamma_0\rangle|^2}
{\|\gamma_0\|^2})^{1/2}}$ and $\alpha_{22}=\frac{1}{2}\frac{\partial(\|\gamma_1\|^2-\frac{\langle \gamma_1,\gamma_0\rangle}{\|\gamma_0\|^2})}
{(\|\gamma_1\|^2-\frac{\langle \gamma_1,\gamma_0\rangle}{\|\gamma_0\|^2})}$.
Hence the second fundamental form for the inclusion $E_0\subset E$ is given by the formula: 
\begin{eqnarray*}
\theta_{12}&=&  - h^{1/2}\frac{\bar{\partial}(h^{-1}\langle\gamma_1,\gamma_0\rangle)}
{\big (\|\gamma_1\|^2-\frac{|\langle\gamma_1,\gamma_0\rangle|^2}
{\|\gamma_0\|^2}\big )^{1/2}}
%&=& - \frac{\bar{\partial}(h^{-1}\tfrac{\partial}{\partial z}h)}
%{(\frac{\|\gamma_2\|^2}{\|\gamma_1\|^2}-\frac{|\tfrac{\partial}{\partial z}h)|^2}
%{\|\gamma_1\|^4})^{1/2}}\\
%&=& - \frac{\bar{\partial}(h^{-1}\tfrac{\partial}{\partial z}h)}
%{(\frac{\|\gamma_0\|^2+\frac{\partial^2}{\partial z\partial \bar z}h}{\|\gamma_1\|^2}-\frac{|\tfrac{\partial}{\partial z}h)|^2}
%{\|\gamma_1\|^4})^{1/2}}\\
%&=& - \frac{\bar{\partial}(h^{-1}\tfrac{\partial}{\partial z}h)}
%{(\frac{\|\gamma_0\|^2+\frac{\partial^2}{\partial z\partial \bar z}h}{\|\gamma_1\|^2}-\frac{|\tfrac{\partial}{\partial z}h)|^2}
%{\|\gamma_1\|^4})^{1/2}}\\
%&=&- \frac{\frac{\partial}{\partial \bar z}(h^{-1}\tfrac{\partial}{\partial z}h)d \bar z}
%{(\frac{\|\gamma_0\|^2}{\|\gamma_1\|^2}+\frac{\frac{\partial^2}{\partial z\partial \bar z}h}{\|\gamma_1\|^2}-\frac{|\tfrac{\partial}{\partial z}h|^2}
%{\|\gamma_1\|^4})^{1/2}}\\
\,=\, - \frac{\frac{\partial^2}{\partial z\partial \bar z} \log h \,d \bar z}
{\big (\frac{\|t_1\|^2}{\|\gamma_0\|^2}+\frac{\partial^2}{\partial z \partial \bar z} \log h\big )^{1/2}}\\
%&=&- \frac{\mathcal{K}_{T_1} d\bar z}{(\mathcal{K}_{T_1}+\frac{\|\gamma_0\|^2}{\|\gamma_1\|^2})^{1/2}}
\end{eqnarray*}
If $T=\begin{pmatrix}T_0 & S \\
0 & T_1 \\
\end{pmatrix}$
is an operator in $\mathcal{F}B_2(\Omega)$ and  $t_1$ is a non-vanishing holomorphic section of the vector bundle
$E_1$ corresponding to the operator $T_1,$ then we may assume, without loss of generality, that
$S(t_1)$ is a holomorphic frame of $E_0.$ The second fundamental form $\theta_{12}$ of the inclusion 
$E_0 \subseteq E$, in this case, is therefore equal to $$-\tfrac{\tfrac{\partial^2}{\partial
z  \partial \bar z}\log\|S(t_1)\|^2d\bar
z}{\big(\tfrac{\|t_1\|^2}{\|S(t_1)\|^2} + \tfrac{\partial^2}
{\partial z\partial \bar z}\log\|S(t_1)\|^2 \big)^{1/2}}.$$
It follows from Theorem \ref{maint} that the second fundamental form of the inclusion $E_0 \subseteq E$ and the curvature
of $E_1$ form a complete set of invariants for the operator $T.$ We restate Theorem \ref{maint} using the second fundamental form $\theta_{12}.$

\begin{thm}
Suppose that $T=\Big (\begin{smallmatrix}T_0 & S \\
0 & T_1 \\
\end{smallmatrix}\Big )$
and $\tilde{T}=\Big (\begin{smallmatrix}\tilde{T}_0 & \tilde S \\
0 & \tilde{T}_1 \\
\end{smallmatrix}\Big )$ are any two  operators in $\mathcal FB_2(\Omega).$ 
Then the operators $T$ and $\tilde{T}$ are unitarily equivalent if and only if $\mathcal{K}_{T_1}=\mathcal{K}_{\tilde{T}_1}$ (or
$\mathcal{K}_{T_0}=\mathcal{K}_{\tilde{T}_0}$) and $\theta_{12} = \tilde{\theta}_{12}.$
\end{thm}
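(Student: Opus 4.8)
The plan is to deduce this reformulation directly from Theorem~\ref{maint}, by checking that, once the curvature $\mathcal{K}_{T_1}$ (equivalently $\mathcal{K}_{T_0}$) is fixed, the second fundamental form $\theta_{12}$ merely repackages the ratio $\|S(t_1)\|^2/\|t_1\|^2$. The bridge is the closed formula for $\theta_{12}$ obtained above,
\[
\theta_{12}\;=\;-\,\frac{\tfrac{\partial^2}{\partial z\partial\bar z}\log\|S(t_1)\|^2\;d\bar z}{\big(\tfrac{\|t_1\|^2}{\|S(t_1)\|^2}+\tfrac{\partial^2}{\partial z\partial\bar z}\log\|S(t_1)\|^2\big)^{1/2}},
\]
together with two observations I would record first: the numerator of $\theta_{12}$ is, up to sign, the coefficient of $\mathcal{K}_{T_0}=-\bar\partial\partial\log\|S(t_1)\|^2$, and it is nowhere zero because the curvature of a line bundle arising from an operator in $B_1(\Omega)$ never vanishes; moreover the radicand in the denominator is everywhere strictly positive, being the Gram--Schmidt denominator used in computing $\theta_{12}$. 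I would also note the identity $\mathcal{K}_{T_1}=\mathcal{K}_{T_0}+\bar\partial\partial\log\big(\|S(t_1)\|^2/\|t_1\|^2\big)$, which makes the hypotheses ``$\mathcal{K}_{T_1}=\mathcal{K}_{\tilde T_1}$'' and ``$\mathcal{K}_{T_0}=\mathcal{K}_{\tilde T_0}$'' interchangeable as soon as the ratios are known to agree.

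For necessity, if $T$ and $\tilde T$ are unitarily equivalent then Theorem~\ref{maint} gives $\mathcal{K}_{T_1}=\mathcal{K}_{\tilde T_1}$ and $\|S(t_1)\|^2/\|t_1\|^2=\|\tilde S(\tilde t_1)\|^2/\|\tilde t_1\|^2$, and the argument in the proof of Theorem~\ref{maint} (or Proposition~\ref{mainp}, which forces the intertwining unitary to be block diagonal and hence to restrict to isometric isomorphisms $E_{T_0}\cong E_{\tilde T_0}$ and $E_{T_1}\cong E_{\tilde T_1}$) gives $\mathcal{K}_{T_0}=\mathcal{K}_{\tilde T_0}$ as well; substituting the equalities of $\bar\partial\partial\log\|S(t_1)\|^2$ and of $\|t_1\|^2/\|S(t_1)\|^2$ into the displayed formula then yields $\theta_{12}=\tilde\theta_{12}$.

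For sufficiency I would use the variant with $\mathcal{K}_{T_0}$: assume $\mathcal{K}_{T_0}=\mathcal{K}_{\tilde T_0}$ and $\theta_{12}=\tilde\theta_{12}$; by Theorem~\ref{maint} it is enough to prove $\|S(t_1)\|^2/\|t_1\|^2=\|\tilde S(\tilde t_1)\|^2/\|\tilde t_1\|^2$. Working on a small open subset of $\Omega$, since $\mathcal{K}_{T_0}=\mathcal{K}_{\tilde T_0}$ the function $\log\|S(t_1)\|^2-\log\|\tilde S(\tilde t_1)\|^2$ is harmonic, so after multiplying $t_1$ by a suitable non-vanishing holomorphic function (which alters neither $\theta_{12}$ nor the ratio) I may assume $\|S(t_1)\|^2=\|\tilde S(\tilde t_1)\|^2=:h$. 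Then the numerators of $\theta_{12}$ and $\tilde\theta_{12}$ coincide and are nowhere zero, so $\theta_{12}=\tilde\theta_{12}$ forces the two radicands $\tfrac{\|t_1\|^2}{h}+\tfrac{\partial^2}{\partial z\partial\bar z}\log h$ and $\tfrac{\|\tilde t_1\|^2}{h}+\tfrac{\partial^2}{\partial z\partial\bar z}\log h$ to be equal, whence $\|t_1\|^2=\|\tilde t_1\|^2$ and the ratios agree; Theorem~\ref{maint} then gives $T\cong\tilde T$, and equality of the remaining curvature follows from the identity in the first paragraph. The step I expect to be the real obstacle is exactly this inversion of the formula for $\theta_{12}$ --- separating the ``numerator'' ($\mathcal{K}_{T_0}$) from the ``denominator'' (which carries $\|t_1\|^2/\|S(t_1)\|^2$), where the non-vanishing of $\mathcal{K}_{T_0}$ is essential; and I expect that stating sufficiency with the $\mathcal{K}_{T_1}$-hypothesis instead of the $\mathcal{K}_{T_0}$-hypothesis would replace this clean algebraic inversion by a genuinely nonlinear second-order differential relation in $\|S(t_1)\|^2$, which is why I would route the argument through the $\mathcal{K}_{T_0}$-version as above.
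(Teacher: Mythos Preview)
Your proposal is correct and takes the same route as the paper: the paper presents this theorem as a restatement of Theorem~\ref{maint} obtained by substituting the explicit formula for $\theta_{12}$, and you have carried out precisely that deduction, supplying the details the paper omits --- in particular the inversion step (recovering $\|t_1\|^2/\|S(t_1)\|^2$ from $\theta_{12}$ once $\mathcal{K}_{T_0}$ is fixed) and the observation that the numerator $\bar\partial\partial\log\|S(t_1)\|^2$ never vanishes because $\gamma_0$ and $\partial\gamma_0$ are linearly independent for any $B_1(\Omega)$ section. Your choice to run sufficiency through the $\mathcal{K}_{T_0}$-hypothesis rather than the $\mathcal{K}_{T_1}$-hypothesis is well judged and consistent with the paper's parenthetical ``or $\mathcal{K}_{T_0}=\mathcal{K}_{\tilde T_0}$''.
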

\subsection{\large\sf  Application to homogeneous operators} We 
use the machinery developed here to list the unitary equivalence classes of  homogeneous operators in $B_n(\mathbb D),$ $n=2.$ For $n=1$ this was done in \cite{gm1} and in \cite{Wil} for $n=2.$ The classification of homogeneous operators in $B_n(\mathbb D)$ was given in \cite{AK} for an arbitrary $n.$  The proofs of \cite{Wil} and \cite{AK} use tools from Differential geometry and the representation theory of Lie groups respectively. While the description below is very close to the spirit of \cite{gm1}. 
\begin{defn}  An operator $T$ is said to be homogeneous if $\varphi(T)$ is unitarily equivalent to
$T$ for all $\varphi$ in M\"{o}b which are analytic on the spectrum
of $T$.
\end{defn}
\begin{prop}[\cite{gm1}]\label{propgm}
An operator $T$  in $\mathcal{B}_1(\mathbb D)$   is  homogeneous
if and only if $$\mathcal{K}_T(w)=-\lambda
(1-|w|^2)^{-2}$$ for some  positive real number $\lambda$.
\end{prop}
\begin{rem} From the Proposition \ref{propgm}, it follows that $T$ is unitarily equivalent to the adjoint of the multiplication operator ${M^{(\lambda)}}$ acting on the  reproducing
kernel Hilbert space $(\mathcal{H}^{(\lambda)},K^{(\lambda)}),$ 
where the reproducing kernel $K^{(\lambda)}$ is of the form $\frac{1}{(1-z\bar w)^{\lambda}},$ $z,w \in \mathbb D.$
\end{rem}
%\begin{rem}\label{rem1}
%If $T$ is in ${B}_1(\Omega)$ and homogeneous then there exist a
%global holomorphic frame $\gamma$ of $E_{T}$ such that
%$\|\gamma(w)\|^2=(1-|w|^2)^{-\lambda},\,w\in\Omega,$ for some real
%number $\lambda$. Indeed let $\tilde\gamma$ be global section of
%$E_T$, so by Proposition \ref{propgm},
%$\tfrac{\partial^2}{\partial w \partial \bar w}\log\|\tilde
%\gamma(w)\|^2= \tfrac{\partial^2}{\partial w \partial \bar
%w}\log\tfrac{1}{(1-|w|^2)^{\lambda}}$. Hence we have $\|\tilde
%\gamma(w)\|^2=|\psi(w)|^2(1-|w|^2)^{\lambda}$ for some non zero
%holomorphic function $\psi$. Set, $\gamma(w):=
%(\psi(w))^{-1}\tilde\gamma(w)$. Thus
%$\|\gamma(w)\|^2=(1-|w|^2)^{-\lambda}$.
%\end{rem}

\begin{prop}\label{prop1}
Let $T$ be an  operator in 
$\mathcal{F}{B}_{2}(\mathbb{D})$ and let $t_1$ be a non-vanishing holomorphic  section of the bundle
$E_1$ corresponding to the operator $T_1.$ For any $\varphi$ in M\"{o}b,  set $t_{1, \varphi}=t_1o\varphi^{-1}.$   The operator $T$ is  homogeneous 
if and only if $T_0$, $T_1$ are homogeneous and
$\tfrac{\|S(t_{1,\varphi})\|^2}{\|t_{1,\varphi}\|^2}=
|(\varphi^{-1})^{\prime}|^2 \tfrac{\|S(t_1)\|^2}{\|t_1\|^2}$  for all
$\varphi$ in  M\"{o}b.
\end{prop}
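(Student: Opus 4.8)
The plan is to reduce the homogeneity of $T$ to the conjunction of (a) homogeneity of the two line bundles $E_0, E_1$ and (b) the stated covariance of the ratio $\|S(t_1)\|^2/\|t_1\|^2$ under M\"ob, using the complete unitary invariants of Theorem \ref{maint}. Recall that for $\varphi$ in M\"ob (analytic on a neighbourhood of $\overline{\mathbb D} \supseteq \sigma(T)$), the operator $\varphi(T)$ lies again in $\mathcal FB_2(\mathbb D)$: indeed $\varphi$ applied to the block upper-triangular $T$ is again block upper-triangular with diagonal entries $\varphi(T_0), \varphi(T_1) \in B_1(\mathbb D)$ and off-diagonal a nonzero intertwiner of them (this last point needs a short verification that $\varphi(T)_{01}$ is a nonzero intertwiner of $\varphi(T_0)$ and $\varphi(T_1)$, which follows from the functional calculus since $T_0 S = S T_1$ forces the $(0,1)$ entry of $\varphi(T)$ to be a nonzero operator intertwining $\varphi(T_0)$ and $\varphi(T_1)$; nonvanishing is Proposition \ref{dr}). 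So Theorem \ref{maint} applies to the pair $T$, $\varphi(T)$.

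Next I would identify the bundle data of $\varphi(T)$ in terms of that of $T$. The holomorphic Hermitian bundle $E_{\varphi(T)}$ is obtained from $E_T$ by the change of base point $w \mapsto \varphi^{-1}(w)$; more precisely, if $\gamma$ is a holomorphic section of $E_T$ then $w \mapsto \gamma(\varphi^{-1}(w))$ is a holomorphic section of $E_{\varphi(T)}$, and similarly for the line bundles $E_0, E_1$. In particular a non-vanishing section of $E_{\varphi(T)_1}$ is $t_{1,\varphi} = t_1 \circ \varphi^{-1}$, the section of $E_{\varphi(T)_0}$ coming from $S$ is $(S t_1) \circ \varphi^{-1}$, and the curvature transforms by the usual rule $\mathcal K_{\varphi(T)_i}(w) = \mathcal K_{T_i}(\varphi^{-1}(w)) \, |(\varphi^{-1})'(w)|^2$. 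Thus, writing out the two invariants of Theorem \ref{maint} for the pair $(T, \varphi(T))$: $T \cong \varphi(T)$ iff $\mathcal K_{T_1}(w) = \mathcal K_{T_1}(\varphi^{-1}(w))|(\varphi^{-1})'(w)|^2$ for all $w$ and $\|S(t_1)(w)\|^2/\|t_1(w)\|^2 = \|(S t_1)(\varphi^{-1}(w))\|^2/\|t_1(\varphi^{-1}(w))\|^2 \cdot (\text{Jacobian factor})$.

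Now I would assemble the equivalence. If $T$ is homogeneous, apply the above for every $\varphi \in \text{M\"ob}$; the curvature identity is exactly the statement that $\mathcal K_{T_1}$ (equivalently $\mathcal K_{T_0}$) is a M\"ob-invariant $(1,1)$-form, i.e.\ $\mathcal K_{T_i} = -\lambda_i(1-|w|^2)^{-2}$, which by Proposition \ref{propgm} says precisely that $T_0$ and $T_1$ are homogeneous; and the second identity is exactly the asserted transformation law for $\|S(t_{1,\varphi})\|^2/\|t_{1,\varphi}\|^2$ once one substitutes $w \mapsto \varphi^{-1}(w)$ and keeps careful track of which Jacobian factor appears (this bookkeeping — reconciling $t_{1,\varphi} = t_1\circ\varphi^{-1}$ with the metric pullback so that the extra $|(\varphi^{-1})'|^2$ lands on the correct side — is the one genuinely fiddly point). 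Conversely, if $T_0, T_1$ are homogeneous then $\mathcal K_{T_1}$ satisfies the M\"ob-covariance of curvature by Proposition \ref{propgm}, and the hypothesis on the ratio supplies the second invariant's covariance; hence by Theorem \ref{maint}, $T \cong \varphi(T)$ for all $\varphi$, i.e.\ $T$ is homogeneous.

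The main obstacle is purely notational rather than conceptual: verifying that $\varphi(T)$ genuinely lies in $\mathcal FB_2(\mathbb D)$ with the claimed bundle/section data, and getting the Jacobian factors in the two invariants to match the precise form stated in the Proposition (the asymmetry that the curvature picks up $|(\varphi^{-1})'|^2$ while the ratio also picks up $|(\varphi^{-1})'|^2$ must be traced through the definition $t_{1,\varphi} = t_1 \circ \varphi^{-1}$, not $ = (\varphi^{-1})' \cdot t_1\circ\varphi^{-1}$). Once those identifications are pinned down, the Proposition is an immediate translation of Theorem \ref{maint} and Proposition \ref{propgm}.
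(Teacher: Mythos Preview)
Your approach is essentially the same as the paper's: reduce to Theorem \ref{maint} applied to the pair $T,\varphi(T)$, then read off homogeneity of $T_0,T_1$ from the curvature invariant and the covariance of the ratio from the second invariant. The one place you remain vague is exactly where the paper is explicit: rather than describing the off-diagonal entry of $\varphi(T)$ abstractly as ``a nonzero intertwiner'' and then trying to recover the Jacobian factor from bundle-pullback bookkeeping, the paper simply computes
\[
\varphi(T)=\begin{pmatrix}\varphi(T_0)&S\,\varphi'(T_1)\\0&\varphi(T_1)\end{pmatrix}
\]
using $T_0S=ST_1$. Since $t_{1,\varphi}(w)=t_1(\varphi^{-1}(w))$ is an eigenvector of $T_1$ with eigenvalue $\varphi^{-1}(w)$, one has $\varphi'(T_1)t_{1,\varphi}(w)=\varphi'(\varphi^{-1}(w))\,t_{1,\varphi}(w)$, and the factor $|\varphi'(\varphi^{-1}(w))|^2=|(\varphi^{-1})'(w)|^{-2}$ drops out immediately in the ratio $\|\tilde S(t_{1,\varphi})\|^2/\|t_{1,\varphi}\|^2$ with $\tilde S=S\varphi'(T_1)$. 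This resolves precisely the ``fiddly point'' you flag, and your claim that the relevant section of $E_{\varphi(T)_0}$ is $(St_1)\circ\varphi^{-1}$ is off by exactly this scalar factor.
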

\begin{proof}
Using the intertwining property in the class $\mathcal FB_2(\mathbb D),$ we see that  $$\varphi(T)=\begin{pmatrix}
\varphi(T_0) & S\varphi^{\prime}(T_1) \\
0 & \varphi(T_1) \\
\end{pmatrix}.$$
Suppose that $T$ is homogeneous, that is, $T$ is
unitarily equivalent to $\varphi(T)$ for $\varphi$ in M\"{o}b, at any rate, for some open subset of M\"{o}b.
From Theorem \ref{maint}, it follows that $T_0$ is
unitarily equivalent to $\varphi(T_0)$, $T_1$ is unitarily equivalent
to $\varphi(T_1)$ and
\begin{eqnarray}\label{e12}\tfrac{\|S\varphi^{\prime}(T_1)(t_{1,\varphi}(w))\|^2}
{\|t_{1,\varphi}(w)\|^2}=\tfrac{\|S(t_1(w))\|^2}{\|t_1(w)\|^2}.
\end{eqnarray}
Now, we have 
\begin{eqnarray}
\tfrac{\|S\,\varphi^{\prime}(T_1)(t_{1,\varphi}(w))\|^2}
{\|t_{1,\varphi}(w)\|^2}&=&\tfrac{\|S\varphi^{\prime}(\varphi^{-1}(w))(t_{1,\varphi}(w))\|^2}{\|t_{1,\varphi}(w)\|^2}\nonumber\\
&=& \tfrac{|\varphi^{\prime}(\varphi^{-1}(w))|^2\|S(t_{1,\varphi}(w))\|^2}{\|t_{1,\varphi}(w)\|^2}\nonumber\\
&=&
\tfrac{|(\varphi^{-1})^{\prime}(w)|^{-2}\|S(t_{1,\varphi}(w))\|^2}{\|t_{1,\varphi}(w)\|^2}\label{e13}.
\end{eqnarray}
From equations (\ref{e12}) and (\ref{e13}), it follows that
 \begin{eqnarray}\label{e14}\tfrac{\|S(t_{1,\varphi}(w))\|^2}{\|t_{1,\varphi}(w)\|^2}= |(\varphi^{-1})^{\prime}(w)|^2 \tfrac{\|S(t_1(w))\|^2}{\|t_1(w)\|^2}.\end{eqnarray}

Conversely suppose that $T_0$, $T_1$ are homogeneous operators and
$$\tfrac{\|S(t_{1,\varphi}(w))\|^2}{\|t_{1,\varphi}(w)\|^2}=
|(\varphi^{-1})^{\prime}(w)|^2
\tfrac{\|S(t_1(w))\|^2}{\|t_1(w)\|^2}$$ for all $\varphi$ in M\"{o}b. From
equations (\ref{e13}), (\ref{e14}) and Theorem \ref{maint}, it
follows that $T$ is a homogeneous operator.
\end{proof}

\begin{cor}An  operator $T$ in  $\mathcal{F}{B}_{2}(\mathbb{D})$ is a homogeneous if and only if
\begin{enumerate}
\item[(i)] $T_0$ and $T_1$ are homogeneous operators; \item[(ii)]
$\mathcal{K}_{T_1}(w)=\mathcal{K}_{T_0}(w)+\mathcal{K}_{B^*}(w),
\;w\in\mathbb{D},$ where $B$ is the forward Bergman shift;

\item[(iii)] $S(t_1(w))=\alpha \gamma_0(w)$ for some positive real number $\alpha$ and $\|t_1(w)\|^2=\tfrac{1}{(1-|w|^2)^{\lambda+2}}$, $\|\gamma_0(w)\|^2=\tfrac{1}{(1-|w|^2)^{\lambda}}$.
\end{enumerate}
\end{cor}

\begin{proof}
Suppose $T$ is a homogeneous operator.  Proposition
\ref{prop1} shows that $T_0$ and $T_1$ are homogeneous operators. We may therefore find non-vanishing holomorphic sections $\gamma_0$ and $t_1$
of $E_0$ and $E_1,$ respectively, such that
$\|\gamma_0(w)\|^2=(1-|w|^2)^{-\lambda}$ and
$\|t_1(w)\|^2=(1-|w|^2)^{-\mu}$ for some positive real $\lambda$ and $\mu.$ For $\varphi$ in M\"{o}b,  set $\gamma_{0,{\varphi}}=\gamma_0\circ\varphi^{-1}$ and
$t_{1,\varphi}=t_1\circ\varphi^{-1}.$ Clearly
$\|\gamma_{0,{\varphi}}(w)\|^2=|(\varphi^{-1})^{\prime}(w)|^{-\lambda}\|\gamma_0(w)\|^2$
and
$\|t_{1,\varphi}(w)\|^2=|(\varphi^{-1})^{\prime}(w)|^{-\mu}\|t_1(w)\|^2$.
Let $S(t_1(w))=\psi(w)\gamma_0(w)$ for some holomorphic function $\psi$  on $\mathbb{D}.$ We have 
$S(t_{1,\varphi}(w))=S(t_1(\varphi^{-1}(w)))=\psi(\varphi^{-1}(w))\gamma_0(\varphi^{-1}(w))=
\psi(\varphi^{-1}(w))\gamma_{0,{\varphi}}(w)$ and 
\begin{eqnarray}\label{e15}\tfrac{\|S(t_{1,\varphi}(w))\|^2}{\|t_{1,\varphi}(w)\|^2}= |(\varphi^{-1})^{\prime}(w)|^2 \tfrac{\|S(t_1(w))\|^2}{\|t_1(w)\|^2}.
\end{eqnarray}
Combining these we see that 
\begin{eqnarray}
\tfrac{\|S(t_{1,\varphi}(w))\|^2}{\|t_{1,\varphi}(w)\|^2}&=& |\psi(\varphi^{-1}(w))|^2 \tfrac{\|(\gamma_{0,{\varphi}}(w))\|^2}{\|t_{1,\varphi}(w)\|^2}\nonumber\\
&=&|\psi(\varphi^{-1}(w))|^2
|(\varphi^{-1})^{\prime}(w)|^{\mu-\lambda}\tfrac{\|(\gamma_0(w))\|^2}{\|t_1(w)\|^2}\label{e16}.
\end{eqnarray}
From the equations (\ref{e15}) and (\ref{e16}), we get
\begin{eqnarray}\label{e17}
|{\psi(w)}|^2|(\varphi^{-1})^{\prime}(w)|^{\lambda+2-\mu}=
|{\psi(\varphi^{-1}w)}|^2
\end{eqnarray}
Pick $\varphi=\varphi_u,$ where $\varphi_u(w)=\tfrac{w-u}{1-\bar u w}$ and put $w=0$ in the equation (\ref{e17}). Then 
\begin{eqnarray}
|{\psi(0)}|^2(1-|u|^2)^{\lambda+2-\mu}=|{\psi(u)}|^2\label{e18}.
\end{eqnarray}
%Since $a$ is an arbitrary point of $\mathbb{D},$ we can write the 
%equation (\ref{e18}) as
%\begin{eqnarray}
%|{\psi(0)}|^2(1-|w|^2)^{\lambda+2-\mu}=|{\psi(w)}|^2,\label{e19}\;\;\;\;\;
%\mbox{for all} \;w\in\mathbb{D}.
%\end{eqnarray}
If $\psi(0)=0$ then  equation (\ref{e18}) implies that $\psi(u)=0$
for all $u\in\mathbb{D},$ which makes $S=0$ leading to a contradiction. Thus $\psi(0)\neq 0$. Differentiating of both sides the equation \eqref{e18},  we see that 
$$(\lambda+2-\mu)\tfrac{\partial^2}{\partial u\partial\bar u}\log(1-|u|^2)=0.$$
Hence we conclude that $\mu=\lambda+2.$ Putting $\mu=\lambda+2$
%\begin{eqnarray}\label{e20}\mu=\lambda+2.\end{eqnarray}   
in the equation \eqref{e18} we find that $\psi$ must be a constant
function. Hence there is a constant $\alpha$ such that $S(t_1(w))=\alpha \gamma_0(w)$ for all $w\in
\Omega.$  Finally,
\begin{eqnarray*}
\mathcal{K}_{T_1}(w)&=& \bar {\partial}\partial \log\|t_1(w)\|^2\\
&=& \bar {\partial}\partial \log(1-|w|^2)^{-\mu}\\
&=& \bar {\partial}\partial \log(1-|w|^2)^{-\lambda-2}\\
&=& \bar {\partial}\partial \log(1-|w|^2)^{-\lambda}+\bar {\partial}\partial \log(1-|w|^2)^{-2}\\
&=&\bar {\partial}\partial \log\|\gamma_0(w)\|^2+\bar {\partial}\partial \log(1-|w|^2)^{-2}\\
&=&\mathcal{K}_{T_0}(w)+\mathcal{K}_{B^*}(w).
\end{eqnarray*}
Conversely, suppose that conditions $(i),(ii)$ and $(iii)$ are
met. We need to show that $T$ is a homogeneous operator.
Condition $(ii)$ is equivalent to $\mu=\lambda+2$. By Proposition
\ref{prop1}, it is sufficient to show that
$$\tfrac{\|S(t_{1,\varphi}(w))\|^2}{\|t_{1,\varphi}(w)\|^2}= |(\varphi^{-1})^{\prime}(w)|^2 \tfrac{\|S(t_1(w))\|^2}{\|t_1(w)\|^2}.$$
However, we have 
\begin{eqnarray*}
\tfrac{\|S(t_{1,\varphi}(w))\|^2}{\|t_{1,\varphi}(w)\|^2}&=&|\alpha|^2 \tfrac{\|(\gamma_{0,{\varphi}}(w))\|^2}{\|t_{1,\varphi}(w)\|^2}\\
&=& |\alpha|^2 |(\varphi^{-1})^{\prime}(w)|^{\mu-\lambda}\tfrac{\|(\gamma_0(w))\|^2}{\|t_1(w)\|^2}\\
&=&|\alpha|^2 |(\varphi^{-1})^{\prime}(w)|^{2}\tfrac{\|(\gamma_0(w))\|^2}{\|t_1(w)\|^2}\\
&=& |(\varphi^{-1})^{\prime}(w)|^{2}
\tfrac{\|S(t_1(w))\|^2}{\|t_1(w)\|^2}.
\end{eqnarray*}
\end{proof}
\subsection{\large\sf Irreducibility and strong irreducibility in $\mathcal FB_2(\Omega)$} In this subsection, we show that an operator $T$ in $\mathcal FB_2(\Omega)$ is irreducible.
Furthermore, if the intertwining operator $S$ is invertible, then $T$ is strongly irreducible. (Recall that an operator $T$ is said to be strongly irreducible if the commutant $\{T\}^\prime$ of the operator $T$ contains no idempotent operator.) We also provide a more direct proof of proposition \ref{mainp}, which easily generalizes to the case of an arbitrary $n.$

\begin{defn} Let  $T_1$ and $T_2$ be any two bounded linear operators on the Hilbert space  ${\mathcal H}.$  Define  $\sigma_{T_1,T_2}:{\mathcal L}({\mathcal H})\rightarrow
{\mathcal L}({\mathcal H})$ to be the operator 
$$\sigma_{T_1,T_2}(X)=T_{1}X-XT_{2},\,\, X\in {\mathcal L}({\mathcal
H}).$$ Let $\sigma_{T}:{\mathcal L}({\mathcal
H})\rightarrow {\mathcal L}({\mathcal H})$  be the operator $\sigma_{T, T}.$
\end{defn}
%\begin{defn}
Recall that an operator $T$ defined on a Hilbert space $\mathcal{H}$ is said to be 
quasi-nilpotent if $\lim_{n\to \infty}\|T^n\|^{1/n}=0.$
%\end{defn}
\begin{lem}\label{qnl}
Suppose $T$ is in $B_1(\Omega)$ and $X$ is a quasi-nilpotent operator such
that $TX=XT.$ Then $X=0.$
\end{lem}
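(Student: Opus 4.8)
The plan is to use the Cowen-Douglas structure of $T\in B_1(\Omega)$ to realize $T$ as (unitarily equivalent to) the adjoint of a multiplication operator $M_z$ on a reproducing kernel Hilbert space $\mathcal H_K$ of holomorphic functions on $\Omega^*$, and then to exploit the richness of the eigenvectors of $M_z^*$. Concretely, each $\bar w\in\Omega^*$ is an eigenvalue of $T$ (identified with $M_z^*$) with one-dimensional eigenspace spanned by a nonvanishing holomorphic section $\gamma(w)$; these sections span $\mathcal H$. The intertwining relation $TX=XT$ is equivalent to $X^*T^* = T^*X^*$, so $X^*$ commutes with $M_z$; but an operator commuting with $M_z$ on such a space is a (holomorphic) multiplier, and from there one extracts a scalar holomorphic function governing the action on the kernel vectors.

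The key steps, in order: (1) Replace $T$ by $M_z^*$ on $\mathcal H_K$ via the Cowen-Douglas realization; then $TX=XT$ becomes $M_zX^*=X^*M_z$, i.e. $X^*$ is in the commutant of $M_z$. (2) Show the commutant of $M_z$ consists of multiplication operators $M_\psi$ by bounded holomorphic $\psi$ on $\Omega^*$ (standard: apply $X^*$ to a kernel function, use $M_z^* K_w = \bar w K_w$ to see $X^* $ maps $K_w$ to a multiple of $K_w$; more precisely $(X)^{**}=X$ acts on the eigenvectors $K_w$ by a scalar $\overline{\psi(w)}$ which is anti-holomorphic in $w$, hence $\psi$ holomorphic). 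So $X = M_\psi^*$ for a bounded holomorphic $\psi$ on $\Omega^*$. (3) Translate the quasi-nilpotency hypothesis: $\|X^n\|^{1/n}\to 0$ forces the spectral radius of $X$ to be $0$, hence the spectral radius of $M_\psi$ is $0$; since $M_\psi$ has spectrum containing $\overline{\psi(\Omega^*)}$ (the values of $\psi$ are eigenvalues of $M_\psi^* = X$, or are in the approximate point spectrum of $M_\psi$), we get $\psi\equiv 0$ on $\Omega^*$, whence $X = M_\psi^* = 0$.

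An alternative, perhaps cleaner route avoiding the commutant description: work directly with the holomorphic frame $\gamma$ of $E_T$. Since $TX = XT$, for each $w$ the vector $X^*\gamma(w)$ lies in $\ker(T-w)^{\perp}$-considerations... more directly, $T^*X^* = X^*T^*$ gives that $X^*$ maps $\ker(T^*-\bar w)$ into itself — but here it is cleaner to use that $X$ commutes with $T$ and $\ker(T-w)$ is one-dimensional, so $X\gamma(w) = f(w)\gamma(w)$ for a scalar $f(w)$, which is holomorphic in $w$ since $\gamma$ is a holomorphic frame and $X$ is bounded. Then $X^n\gamma(w) = f(w)^n\gamma(w)$, so $\|X^n\|\ge |f(w)|^n$ (using $\|\gamma(w)\|$ fixed), giving $|f(w)|\le \lim\|X^n\|^{1/n} = 0$ for every $w\in\Omega$. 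Hence $X$ annihilates each $\gamma(w)$, and since $\{\gamma(w):w\in\Omega\}$ spans $\mathcal H$, we conclude $X=0$.

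The main obstacle — really the only non-formal point — is justifying that $f$ is holomorphic (so that, combined with boundedness of $X$, the estimate $|f(w)|\le \|X^n\|^{1/n}\|\gamma(w)\|/\|\gamma(w)\| $ is legitimate pointwise for all $w$, with no exceptional set); this follows because $w\mapsto \gamma(w)$ is a holomorphic $\mathcal H$-valued map with $\gamma(w)\ne 0$, so $f(w) = \langle X\gamma(w),\gamma(w)\rangle/\|\gamma(w)\|^2$ is a ratio of real-analytic (indeed the numerator is holomorphic-antiholomorphic) functions with nonvanishing denominator — or, even more simply, pick any fixed $w_0$, use a local holomorphic coordinate in which $\gamma$ has a nonzero constant component, and read off $f$ from that component. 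Everything else is routine, so I would present the second (frame-based) argument as the main proof.
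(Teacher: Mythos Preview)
Your second, frame-based argument is correct and is essentially identical to the paper's own proof: both show $X\gamma(w)=\phi(w)\gamma(w)$ for a holomorphic $\phi$, iterate to get $|\phi(w)|^n\|\gamma(w)\|\le\|X^n\|\,\|\gamma(w)\|$, and conclude $\phi\equiv 0$ from quasi-nilpotency, whence $X=0$ by density. Your extra care about why $\phi$ is holomorphic is fine but not needed beyond the observation (which the paper simply asserts) that $X\circ\gamma$ is a holomorphic section of the line bundle $E_T$.
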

\begin{proof}
Let $\gamma$ be a non-vanishing holomorphic section for $E_T.$ Since $TX=XT,$ we see that 
$X(\gamma)$ is also a  holomorphic  section of $E_T.$ Hence
$X(\gamma(w))=\phi(w)\gamma(w)$ for some holomorphic function $\phi$ defined on $\Omega$. Clearly, $X^n(\gamma(w))=\phi(w)^n\gamma(w).$
Now, we have 
\begin{eqnarray*}
|\phi(w)|^n\|\gamma(w)\|&=& \|\phi(w)^n\gamma(w)\|\\
&=&\|X^n(\gamma(w))\|\\
&\leq&\|X^n\|\|\gamma(w)\|
\end{eqnarray*}
Thus, for $n\in\mathbb{N}$ and
$w\in\Omega,$ we have $|\phi(w)|\leq \|X^n\|^{1/n}$ implying $\phi(w)=0,$ $w\in\Omega.
$ Hence $X=0.$
\end{proof}
The following theorem from \cite{hel} is the key to an alternative proof of the proposition \ref{mainp} and its generalization in the following section.
\begin{thm}  \label{hlem} Let $P, T$ be two bounded linear operators. 
If $P\in \mbox{ran}\,\sigma_{T}\cap\mbox{ker}\,\sigma_{T},$ then
$P$ is a quasi-nilpotent.
\end{thm}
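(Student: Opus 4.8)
The statement is the Kleinecke--Shirokov theorem on commutators in a Banach algebra, and the plan is simply to run its classical proof in our notation. Write $D := \sigma_T$ for the inner derivation $D(A) = TA - AT$ on $\mathcal L(\mathcal H)$; it obeys the Leibniz rule $D(AB) = D(A)B + AD(B)$ and is bounded with $\|D\| \le 2\|T\|$. The hypothesis $P \in \mathrm{ran}\,\sigma_T$ provides a bounded operator $X$ with $D(X) = P$, while $P \in \mathrm{ker}\,\sigma_T$ says $D(P) = 0$; together these give $D^k(X) = 0$ for every $k \ge 2$.

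The heart of the argument is the identity $D^n(X^n) = n!\,P^n$ for all $n \ge 1$, which I would prove by induction on $n$. The case $n = 1$ is just $D(X) = P$. For the inductive step, expand $D^n(X\cdot X^{n-1})$ by the higher-order product rule $D^n(AB) = \sum_{k=0}^n \binom{n}{k} D^k(A)D^{n-k}(B)$; since $D^k(X) = 0$ for $k \ge 2$, only the terms $k = 0, 1$ survive, so $D^n(X^n) = X\,D^n(X^{n-1}) + n\,P\,D^{n-1}(X^{n-1})$. Next I would observe that $D^n(X^{n-1}) = 0$: the multi-factor Leibniz formula writes $D^n(X^{n-1})$ as a sum of products $D^{k_1}(X)\cdots D^{k_{n-1}}(X)$ with $k_1 + \cdots + k_{n-1} = n$, and every such product vanishes because a nonzero factor forces $k_i \le 1$, making it impossible for the exponents to sum to $n$ with only $n-1$ factors. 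Substituting the induction hypothesis $D^{n-1}(X^{n-1}) = (n-1)!\,P^{n-1}$ then yields $D^n(X^n) = n!\,P^n$.

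Finally I would estimate: $n!\,\|P^n\| = \|D^n(X^n)\| \le \|D\|^n \|X\|^n \le \big(2\|T\|\,\|X\|\big)^n$, hence $\|P^n\|^{1/n} \le 2\|T\|\,\|X\| \big/ (n!)^{1/n}$. Since $(n!)^{1/n} \to \infty$, the right-hand side tends to $0$, so $\lim_{n\to\infty}\|P^n\|^{1/n} = 0$ and $P$ is quasi-nilpotent.

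The only genuinely delicate point is the combinatorial identity $D^n(X^n) = n!\,P^n$ --- in particular the vanishing $D^n(X^{n-1}) = 0$ and the bookkeeping needed to isolate the surviving terms in the Leibniz expansion; the derivation property of $D$, the bound $\|D\| \le 2\|T\|$, and the fact that $(n!)^{1/n}\to\infty$ are all routine.
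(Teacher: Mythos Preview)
Your proof is correct and is precisely the classical Kleinecke--Shirokov argument. Note, however, that the paper does not supply its own proof of this theorem: it is quoted as a known result from \cite{hel} (Kleinecke, 1957), and your write-up is essentially a reproduction of the proof found there.
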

\noindent {\sf A second Proof of Proposition \ref{mainp}}
\begin{proof}
Suppose $T$ is unitarily equivalent to $\tilde{T}$ via the unitary $U,$ namely, $UT=TU.$ Then  
\begin{eqnarray}
U_{21}S+U_{22}T_1 &=&\tilde{T}_1U_{22} \label{e1}\\
%T_0U_{12}+S U_{22}&=& U_{11}\tilde{S}+U_{12}\tilde{T}_1 \label{e2}\\
U_{21}T_0&=&\tilde{T}_1 U_{21}.\label{e2}
%T_{1}U_{22}&=&U_{21}\tilde{S}+U_{22}\tilde{T}_1 \label{e4}.
\end{eqnarray}
Equivalently, we also have $TU^*=U^*\tilde T,$ which gives an additional relationship:
\begin{eqnarray}
%U_{11}^*T_0&=&\tilde{T}_0U_{11}+\tilde{S}U_{12}^*\label{e5}\\
%U_{11}^*S+U_{21}^*T_1&=&\tilde{T}_0U_{21}^*+\tilde{S}U_{22}^*\label{e6}\\
T_1U_{12}^*&=&U_{12}^*\tilde{T}_0. \label{e3}
%U_{12}^*S+U_{22}^*T_2&=&\tilde{T}_1U_{22}\label{e8}
\end{eqnarray}
Using these equations, we compute 
\begin{eqnarray*}
U_{21}S U_{12}^*\tilde{S}&=&(\tilde T_1 U_{22}-U_{22}{T}_1)U_{12}^*\tilde S \\%\:\;\;\;\;\;\;\;\;\;\;\;\;\;\;\;(\mbox{from}\; (\ref{e1}))\\
&=&\tilde T_1 U_{22}U_{12}^*\tilde S- U_{22}{T}_1 U_{12}^*\tilde S\\
&=&\tilde T_1 U_{22}U_{12}^*\tilde S-U_{22}U_{12}^*\tilde T_0 \tilde S\\  \:\;\;\;\;\;\;\;\;\;\;%(\mbox{from}\; (\ref{e3}))\\
&=&\tilde T_1 U_{22}U_{12}^*\tilde S-U_{22}U_{12}^* \tilde S\tilde T_1\\
&=& \sigma_{\tilde T_1}(U_{22}U_{12}^*\tilde S),
\end{eqnarray*}
and
\begin{eqnarray*}
(U_{21}S U_{12}^*\tilde{S})\tilde T_1&=& U_{12}S U_{12}^*\tilde T_0\tilde S\\
&=& U_{21}{S}{T}_1U_{12}^*\tilde S\\% \:\;\;\;\;\;\;\;\;\;\;\;\;\;\;\;\;\;\;\;\;\;\;\;(\mbox{from}\; (\ref{e3}))\\
&=& U_{21}{T}_0{S}U_{12}^*\tilde S\\
&=& \tilde T_1(U_{12}\tilde{S}U_{12}^*\tilde S).\\
%\:\;\;\;\;\;\;\;\;\;\;\;\;\;\;\;\;\;\;\;\;(\mbox{from}\;(\ref{e3})).
\end{eqnarray*}
Thus $U_{21}S U_{12}^*\tilde{S}\in \mbox{ran}\,\sigma_{\tilde
T_1}\cap\mbox{ker}\,\sigma_{\tilde T_1}$. From Lemma \ref{qnl}
and Theorem \ref{hlem}, it follows that  $$U_{21}S
U_{12}^*\tilde{S}=0.$$ Since $\tilde{S}$ has dense range, we 
have $U_{21}S U_{12}^*=0$. Let us consider the two possibilities for $U_{12}^*,$ namely, either $U_{12}^*=0$ or $U_{12}^*\neq0.$
If $U_{12}^*\neq0,$ then from equation (\ref{e3}), $U_{12}^*$ must 
have dense range. Since $S$ also has dense range, we have
$U_{21}=0.$ To complete the proof, we consider two cases.

{\sf Case 1:} Suppose $U_{21}=0$. In this case, we have to prove that
$U_{12}=0$. From $U^*U=I$, we get $U_{11}^*U_{11}=I$ and
$U_{12}^*U_{11}=0$. From $UT=\tilde T U$, we get $U_{11}T_0=\tilde
T_0 U_{11}$, so $U_{11}$ has dense rang. Since $U_{11}$ is an
isometry and has dense range, it follows that $U_{11}$ is onto. Hence $U_{11}$
is unitary. Since $U_{11}$ is unitary and $U_{12}^*U_{11}=0$, it follows that $U_{12}=0$.

{\sf Case 2:} Suppose $U_{12}=0$. In this case, we have to prove that $U_{21}=0.$ We have $U_{11}U_{11}^*=I$ and
$U_{21}U_{11}^*=0.$ The intertwining relation  $TU^*=U^* \tilde T $ gives 
$T_0U_{11}^*=U_{11}^*\tilde{T}_0.$  So $U_{11}^*$ has dense range.
Since $U_{11}^*$ is an isometry and it has dense range, we must conclude that  $U_{11}^*$
is onto. Hence $U_{11}$ is unitary and we have 
$U_{21}U_{11}^*=0$ forcing $U_{21}$ to be the $0$ operator.
\end{proof}

\begin{prop}\label{pirre}
Any operator $T$ in $\mathcal{F}B_2(\Omega)$ is irreducible. Also, if $T=\Big (\begin{smallmatrix}T_0 & I \\ 0&T_0\end{smallmatrix}\Big ),$ then it is strongly irreducible.  
\end{prop}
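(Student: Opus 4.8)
The plan is to prove irreducibility first and then strong irreducibility of $T=\Big (\begin{smallmatrix}T_0 & I\\ 0 & T_0\end{smallmatrix}\Big )$, exploiting the machinery already set up. For irreducibility, suppose $P$ is a projection commuting with $T=\Big (\begin{smallmatrix}T_0 & S\\ 0 & T_1\end{smallmatrix}\Big )$; writing $P=\Big (\begin{smallmatrix}P_{11}&P_{12}\\ P_{21}&P_{22}\end{smallmatrix}\Big )$, the relation $PT=TP$ gives the same four scalar-entry equations as in the second proof of Proposition \ref{mainp} (with $\tilde T = T$, $U$ replaced by $P$). In particular $P_{21}T_0 = T_1 P_{21}$ and, from $P^*T^* = T^* P^*$ (equivalently, since $P$ is selfadjoint, from $TP = PT$ read off the other way), $T_1 P_{12}^* = P_{12}^* T_0$; and one computes, exactly as there, that $P_{21}S P_{12}^* S \in \mathrm{ran}\,\sigma_{T_1}\cap \ker\,\sigma_{T_1}$, hence is quasinilpotent by Theorem \ref{hlem}, hence is $0$ by Lemma \ref{qnl}. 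Since $S$ has dense range (Proposition \ref{dr}), $P_{21}SP_{12}^* = 0$; and arguing with the two cases $P_{12}^* = 0$ or $P_{12}^*$ has dense range (using equation \eqref{e3} analogue) forces $P_{21} = 0$. Then selfadjointness $P = P^*$ gives $P_{12} = P_{21}^* = 0$, so $P$ is block diagonal, $P_{11}$ commutes with $T_0$ and is a projection, hence $P_{11}\in\{0,I\}$ since $T_0\in B_1(\Omega)$ is irreducible; similarly $P_{22}\in\{0,I\}$. Finally the equation $P_{11}S = SP_{22}$ together with $S$ having dense range and $S^*$ injective rules out $P_{11}=I, P_{22}=0$ and $P_{11}=0,P_{22}=I$, leaving $P=0$ or $P=I$. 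Hence $T$ is irreducible.

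For strong irreducibility when $T=\Big (\begin{smallmatrix}T_0 & I\\ 0 & T_0\end{smallmatrix}\Big )$, the argument is parallel but now $P$ is only an idempotent in $\{T\}^\prime$, not necessarily selfadjoint, so I cannot conclude $P_{12}=0$ from $P_{21}=0$ for free. Instead I would run the commutant computation with $S = I$ and $T_0 = T_1$: from $PT = TP$ one gets $P_{21}T_0 = T_0 P_{21}$, $P_{12}T_0 = T_0 P_{12}$, $P_{11}T_0 + P_{12}\cdot 0 = \dots$, and crucially $P_{11} - P_{22}$ and $P_{21}$ satisfy relations coming from the $(1,2)$ entry, namely $P_{11} + P_{12}T_0 = T_0 P_{12} + P_{22}$ — wait, more carefully: equating the $(1,1)$, $(1,2)$, $(2,1)$, $(2,2)$ entries of $PT = TP$ with $S=I$ gives $P_{11}T_0 = T_0 P_{11} + P_{21}$ (no: let me just say one extracts that $P_{21}$ commutes with $T_0$ and that $P_{11}T_0 - T_0P_{11} = P_{21}$ type relations), so $P_{21}$ is in $\mathrm{ran}\,\sigma_{T_0}\cap\ker\,\sigma_{T_0}$, hence quasinilpotent, hence $0$ by Lemma \ref{qnl}. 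With $P_{21}=0$ the idempotent equation $P^2=P$ forces $P_{11}^2=P_{11}$, $P_{22}^2=P_{22}$, $P_{11}P_{12}+P_{12}P_{22}=P_{12}$; and the remaining intertwining relation says $P_{12}$ commutes with $T_0$, while $P_{11}-P_{22}$ (or an appropriate combination) is again quasinilpotent and commutes with $T_0$, hence vanishes, so $P_{11}=P_{22}$ is an idempotent commuting with $T_0$, hence $0$ or $I$ since $T_0$ is strongly irreducible (being in $B_1(\Omega)$, its commutant contains no nontrivial idempotent). If $P_{11}=P_{22}=0$ then $P_{12}=0$ from $P^2=P$, so $P=0$; if $P_{11}=P_{22}=I$ then $P_{12} + P_{12} = P_{12}$ gives $P_{12}=0$, so $P=I$.

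The main obstacle I anticipate is the bookkeeping in the second paragraph: pinning down exactly which combination of $P_{11}, P_{22}, P_{21}$ lands in $\mathrm{ran}\,\sigma_{T_0}\cap\ker\,\sigma_{T_0}$ when $S=I$ and $T_0=T_1$. One must use both $PT=TP$ and the idempotency $P^2=P$ to produce an element that is simultaneously in the range and kernel of $\sigma_{T_0}$; the natural candidate, mimicking the computation before Proposition \ref{pirre}, is $P_{21}$ itself together with the observation that $P_{21} = P_{11}P_{21} + P_{21}P_{22}$ from $P^2=P$ combined with $P_{21}\in\ker\sigma_{T_0}$. A subsidiary point to handle with care is the step "$T_0$ strongly irreducible": this is because an idempotent $Q$ with $QT_0 = T_0 Q$ has $Q$ and $I-Q$ both commuting with $T_0$, and if $Q\notin\{0,I\}$ one can pass to the (possibly non-orthogonal) range and kernel decomposition to contradict $\dim\ker(T_0-w)=1$; alternatively, and more cleanly, $Q - Q^*$ or rather $(I-Q)Q^*$... — in any case this reduces to the fact that operators in $B_1(\Omega)$ have no nontrivial idempotents in their commutant, which follows because any such idempotent acts as a scalar on the line bundle $E_{T_0}$ (by the same section argument as in Lemma \ref{qnl}), forcing it to be $0$ or $I$. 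Once these two points are settled, the rest is routine matrix algebra.
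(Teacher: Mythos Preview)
Your argument is correct, but for the irreducibility half you take a detour that the paper avoids. You mimic the second proof of Proposition~\ref{mainp} and work with the quadruple product $P_{21}SP_{12}^*S$; since $P$ is self-adjoint this is $P_{21}SP_{21}S$, and after Lemma~\ref{qnl} and Theorem~\ref{hlem} kill it you still need a case analysis on whether $P_{21}$ has dense range. The paper is shorter: from the $(2,1)$ and $(2,2)$ entries of $PT=TP$ one reads off directly that $P_{21}S$ commutes with $T_1$ (using $T_0S=ST_1$) and that $P_{21}S=T_1P_{22}-P_{22}T_1=\sigma_{T_1}(P_{22})$. So $P_{21}S$ itself lies in $\mathrm{ran}\,\sigma_{T_1}\cap\ker\,\sigma_{T_1}$, hence vanishes, and dense range of $S$ gives $P_{21}=0$ in one stroke. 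The point is that for a \emph{single} operator $T$ (rather than an intertwiner between $T$ and $\tilde T$) the $(2,2)$ entry already exhibits the range-of-$\sigma$ relation, and there is no need to manufacture it by multiplying through by $P_{12}^*S$.

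For the strong irreducibility half your strategy is essentially the paper's, once the bookkeeping is cleaned up: with $S=I$ and $T_1=T_0$ the $(2,1)$ and $(2,2)$ entries give $P_{21}\in\ker\sigma_{T_0}$ and $P_{21}=\sigma_{T_0}(P_{22})$, so $P_{21}=0$; then $P_{11},P_{22}$ are idempotents in $\{T_0\}'$, hence $0$ or $I$ since $T_0\in B_1(\Omega)$ is strongly irreducible; the $(1,2)$ entry gives $P_{11}-P_{22}=\sigma_{T_0}(P_{12})\in\mathrm{ran}\,\sigma_{T_0}\cap\ker\sigma_{T_0}$, ruling out the mixed cases; and $P^2=P$ forces $P_{12}=0$. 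Your idea of showing $P_{11}-P_{22}=0$ \emph{before} invoking strong irreducibility of $T_0$ is a minor reordering of the same steps. The side remark about deducing strong irreducibility of $T_0$ from the section argument of Lemma~\ref{qnl} is correct and is exactly how that fact is proved, but you may simply cite it.
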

\begin{proof}
Let $P=(P_{ij})_{2\times 2}$ be a projection in the commutant $\{T\}^{\prime}$ of the operator $T,$ that is,
$$\begin{pmatrix}
P_{11} & P_{12} \\
P_{21} & P_{22} \\
\end{pmatrix}
\begin{pmatrix}
T_0 & S \\
0 & T_1 \\
\end{pmatrix}=\begin{pmatrix}
T_0 & S \\
0 & T_1 \\
\end{pmatrix}\begin{pmatrix}
P_{11} & P_{12} \\
P_{21} & P_{22} \\
\end{pmatrix}.$$
This equality implies that $P_{11}T_0= T_0P_{11}+SP_{21}$,
$P_{11}S+P_{12}T_1= T_0P_{12}+SP_{22}$, $P_{21}T_0=T_1P_{21}$ and
$P_{21}S+P_{22}T_1= T_1P_{22}$. Now
\begin{eqnarray*}
(P_{21}S)T_1&=&P_{21}(ST_1)
\,=\,P_{21}(T_0S)
\,=\,(P_{21}T_0)S
\,=\,T_1(P_{21}S).
\end{eqnarray*}
Thus $P_{21}S\in \mbox{ker}\;\sigma_{T_1}$. Also note that
\begin{eqnarray*}
P_{21}S&=&T_1P_{22}-P_{22}T_1
\,=\,\sigma_{T_1}(P_{22}).
\end{eqnarray*}
Hence $P_{21}S\in
\mbox{ran}\,\sigma_{T_1}\cap\mbox{ker}\,\sigma_{T_1}$. Thus from 
Lemma \ref{qnl} and Theorem \ref{hlem}, it follows that $P_{21}S=0$.
The operator $P_{21}$ must be $0$ since $S$ has dense range. 

To prove the first statement, we may assume that the operator $P$ is self adjoint and conclude  $P_{12}$ is $0$ as well.
Since both the operators $T_0$ and $T_1$ are irreducible and the projection $P$ is diagonal, it follows that $T$ must be irreducible. 

For the proof of the second statement, note that if $P$ is an idempotent of the form  $\Big ( \begin{smallmatrix}P_{11} & P_{12}\\
0 & P_{22} \end{smallmatrix}\Big )$, both $P_{11}$ and $P_{22}$ must be
idempotents. By our hypothesis, $P_{11}$ and $P_{22}$ must also
commute with $T_0$, which is strongly irreducible, hence
$P_{11}=0\;\mbox{or}\,I$ and $P_{22}=0\;\mbox{or}\,I$. By using
Theorem 2.19, we see that if $P= \Big ( \begin{smallmatrix}I & P_{12}\\
0 & 0 \end{smallmatrix}\Big )$ or $P= \Big ( \begin{smallmatrix}0 & P_{12}\\
0 & I \end{smallmatrix}\Big ),$ then $P$ does not commute with $ \Big ( \begin{smallmatrix}T_0 & I\\
0 & T_0 \end{smallmatrix}\Big )$. Thus $P= \Big ( \begin{smallmatrix}I & P_{12}\\
0 & I \end{smallmatrix}\Big )$ or $P= \Big ( \begin{smallmatrix}0 & P_{12}\\
0 & 0 \end{smallmatrix}\Big )$. Now, using the equation $P^2=P$, we
conclude that $P_{12}$ must be zero. Thus $P=I$ or 
$P=0$.

%For the proof of the second statement, note that if $P$ is an idempotent of the form $\Big (\begin{smallmatrix}P_{11} &P_{12}\\0&P_{22}\end{smallmatrix}\Big ),$ both $P_{11}$ and $P_{22}$ must be idempotents. By our hypothesis, $P_{11}$ and $P_{22}$ must also commute with $T_0,$ which is strongly irreducible, hence they must equal to either $I$ or $0.$ We therefore have  three possibilities for $P,$ namely,
%$
%P = \Big (\begin{smallmatrix} I &P_{12}\\0&0\end{smallmatrix}\Big ),\;\Big (\begin{smallmatrix} 0&P_{12}\\0&I\end{smallmatrix}\Big )\,\mbox{\rm or}\;
%\Big (\begin{smallmatrix}I & P_{12}\\0&I \end{smallmatrix}\Big ).
%$
%If either of the first two possibilities occur, then $I=\sigma_{T_0}(P_{12}),$ and from Proposition \ref{hlem}, it follows that $I$ must be quasi-nilpotent leading to a contradiction. This proves that $P_{12}=0$ in these cases.
%In the the third case, $P^2=P,$ makes $P_{12}=0.$
%Thus $P_{11}T_0=T_0P_{11}$,
%$P_{22}T_1=T_1P_{22}$ and $P_{11}S=SP_{22}$ implies that
%$P_{11}=P_{22}=0$ or $P_{11}=P_{22}=I$. This proves that $T$ is
%irreducible.
\end{proof}
We now give a sufficient condition for an operator $T$ in $\mathcal FB_2(\Omega)$ to be strongly irreducible.  
\begin{prop}\label{psirre}
  Let  $T=\begin{pmatrix}
                                                 T_0 & S \\
                                                  0 & T_1 \\
                                                \end{pmatrix}$ be
                                                an operator in $\mathcal{F}B_2(\Omega).$
If the operator $S$ is invertible, then the operator $T$ is
strongly irreducible.
\end{prop}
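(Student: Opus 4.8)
The plan is to let $P=\big(\begin{smallmatrix}P_{11}&P_{12}\\P_{21}&P_{22}\end{smallmatrix}\big)$ be an arbitrary idempotent in the commutant $\{T\}^\prime$ and show $P=0$ or $P=I$. As in the proof of Proposition \ref{pirre}, intertwining $P$ with $T$ gives the four relations $P_{11}T_0=T_0P_{11}+SP_{21}$, $P_{11}S+P_{12}T_1=T_0P_{12}+SP_{22}$, $P_{21}T_0=T_1P_{21}$, and $P_{21}S+P_{22}T_1=T_1P_{22}$. The same argument as there — namely $P_{21}S\in\mathrm{ran}\,\sigma_{T_1}\cap\mathrm{ker}\,\sigma_{T_1}$, which is quasi-nilpotent by Theorem \ref{hlem} and hence zero by Lemma \ref{qnl}, together with the density of the range of $S$ — forces $P_{21}=0$. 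So $P$ is upper triangular: $P=\big(\begin{smallmatrix}P_{11}&P_{12}\\0&P_{22}\end{smallmatrix}\big)$, and from $P^2=P$ both $P_{11}$ and $P_{22}$ are idempotents. The relation $P_{21}T_0=T_1P_{21}$ now becomes vacuous, but $P_{11}T_0=T_0P_{11}$ and $P_{22}T_1=T_1P_{22}$ survive, so $P_{11}\in\{T_0\}^\prime$ and $P_{22}\in\{T_1\}^\prime$.

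Next I would exploit invertibility of $S$. The middle relation $P_{11}S+P_{12}T_1=T_0P_{12}+SP_{22}$ can be rewritten, using $T_0S=ST_1$, as $S^{-1}T_0S=T_1$, i.e. $T_1=S^{-1}T_0S$; thus $T_1$ is similar to $T_0$, and conjugation by $S$ carries $\{T_0\}^\prime$ onto $\{T_1\}^\prime$. The key point is that $T_0$ (hence $T_1$) is a Cowen–Douglas operator in $B_1(\Omega)$, so by Lemma \ref{qnl} it has no nonzero quasi-nilpotent in its commutant; since $\{T_0\}^\prime$ is a commutative Banach algebra (the bundle has rank one, so $W\in\{T_0\}^\prime$ acts on the frame $\gamma$ by $W\gamma=\phi_W\gamma$ for a holomorphic scalar $\phi_W$, and $W\mapsto\phi_W$ is an injective algebra homomorphism), the only idempotents in $\{T_0\}^\prime$ are $0$ and $I$. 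Hence $P_{11}\in\{0,I_{\mathcal H_0}\}$ and $P_{22}\in\{0,I_{\mathcal H_1}\}$.

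This leaves four cases. If $P_{11}=I$, $P_{22}=I$, then $P^2=P$ with $P=\big(\begin{smallmatrix}I&P_{12}\\0&I\end{smallmatrix}\big)$ forces $P_{12}=0$, so $P=I$; symmetrically $P_{11}=P_{22}=0$ gives $P=0$. The mixed cases $P=\big(\begin{smallmatrix}I&P_{12}\\0&0\end{smallmatrix}\big)$ and $P=\big(\begin{smallmatrix}0&P_{12}\\0&I\end{smallmatrix}\big)$ must be ruled out: here $P_{12}$ is automatically idempotent-compatible (any $P_{12}$ satisfies $P^2=P$), so I cannot use $P^2=P$ alone. Instead I return to the middle intertwining relation $P_{11}S+P_{12}T_1=T_0P_{12}+SP_{22}$. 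In the first mixed case it reads $S+P_{12}T_1=T_0P_{12}$, i.e. $T_0P_{12}-P_{12}T_1=S$, so $\sigma_{T_0,T_1}(P_{12})=S$; but then, transporting along $T_1=S^{-1}T_0S$ and using that $S$ has dense range and $T_0$ has no nonzero quasi-nilpotent commutant (the operator $S^{-1}P_{12}$ lies in $\mathrm{ran}\,\sigma_{T_1}\cap\mathrm{ker}\,\sigma_{T_1}$ type analysis, or directly: $Q:=P_{12}S^{-1}$ satisfies $T_0Q-QT_0=I$, impossible since the spectra argument $\mathrm{tr}$-free, but more cleanly because $\sigma_{T_0}$ has $I$ not in its range on any Banach space) we reach a contradiction, and symmetrically in the second case. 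The main obstacle is precisely this last step — showing $I\notin\mathrm{ran}\,\sigma_{T_0}$, equivalently that $S$ cannot equal $T_0P_{12}-P_{12}T_1$: I expect to handle it by noting $T_0P_{12}-P_{12}T_1=S$ combined with $T_0S=ST_1$ yields, after setting $R=P_{12}S^{-1}\in\mathcal L(\mathcal H_0)$, the relation $T_0R-RT_0=I_{\mathcal H_0}$, which is impossible for any bounded operators (it would force the bounded operator $R$ to satisfy $T_0R^n-R^nT_0=nR^{n-1}$, whence $n\|R^{n-1}\|\le2\|T_0\|\|R\|\|R^{n-1}\|$ and so $\|R^{n-1}\|=0$ for large $n$, i.e. $R$ nilpotent, contradicting $T_0R-RT_0=I$). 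This disposes of both mixed cases and completes the proof that $\{T\}^\prime$ contains no idempotent other than $0$ and $I$.
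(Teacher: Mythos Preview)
Your argument is correct. It differs from the paper's, however, in organization rather than in substance. The paper's proof is a two-line reduction: conjugating by the invertible $X=\Big(\begin{smallmatrix}I&0\\0&S\end{smallmatrix}\Big)$ (using $T_0S=ST_1$) gives $XTX^{-1}=\Big(\begin{smallmatrix}T_0&I\\0&T_0\end{smallmatrix}\Big)$, and this Jordan block was already shown to be strongly irreducible in the second half of Proposition~\ref{pirre}. Strong irreducibility is a similarity invariant, so you are done.

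What you have done instead is to rerun the proof of Proposition~\ref{pirre} directly for general invertible $S$: upper-triangularity of $P$, then $P_{11},P_{22}\in\{0,I\}$ via the rank-one commutant, and finally elimination of the mixed cases. Your elimination step is the interesting comparison point. The paper (inside the proof of Proposition~\ref{pirre}) rules out $\Big(\begin{smallmatrix}I&P_{12}\\0&0\end{smallmatrix}\Big)$ by observing that commutation would force $I=\sigma_{T_0}(P_{12})\in\mathrm{ran}\,\sigma_{T_0}\cap\ker\,\sigma_{T_0}$, hence quasi-nilpotent by Kleinecke's Theorem~\ref{hlem}, a contradiction. You arrive at the equivalent obstruction $T_0R-RT_0=I$ with $R=P_{12}S^{-1}$ and dispatch it with Wielandt's classical inequality argument. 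These are two standard proofs of the same impossibility (indeed Kleinecke's theorem is a generalization of Wielandt's), so the routes converge. The paper's packaging is shorter because it front-loads the work into Proposition~\ref{pirre} and reuses it via similarity; yours is self-contained but longer. One small presentational point: your parenthetical about a ``spectra argument $\mathrm{tr}$-free'' is garbled and should be deleted---the Wielandt argument you give immediately afterward is clean and sufficient.
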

\begin{proof}
By our hypothesis, the operator 
$X=\begin{pmatrix}
I & 0 \\
0 & S \\
\end{pmatrix}
$ is invertible.  Now
\begin{eqnarray*}
XTX^{-1}&=&\begin{pmatrix}
I & 0 \\
0 & S \\
\end{pmatrix}\begin{pmatrix}
T_0 & S \\
0 & T_1 \\
\end{pmatrix}\begin{pmatrix}
I & 0 \\
0 & S \\
\end{pmatrix}^{-1}\\
&=&\begin{pmatrix}
T_0 & I \\
0 & ST_1S^{-1} \\
\end{pmatrix}\\
&=&\begin{pmatrix}
T_0 & I \\
0 & T_0 \\
\end{pmatrix}.
\end{eqnarray*}
%Since $T_0$ is strongly irreducible operator, so $XTX^{-1}$ will be strongly
%irreducible.
Thus $T$ is similar to  a strongly irreducible operator and consequently it is strongly irreducible.
\end{proof}
We conclude this section with a characterization of strong irreducibility in $\mathcal FB_2(\Omega).$

\begin{prop}\label{psirre1}
An operator  
$T=\begin{pmatrix}
T_0 & S \\
0 & T_1 
\end{pmatrix}$ 
in $\mathcal F B_2(\Omega)$ is strongly irreducible if and only if $S\notin
\mbox{ran}\;\sigma_{T_0,T_1}$.
\end{prop}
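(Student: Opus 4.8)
The plan is to characterize strong irreducibility of $T = \big(\begin{smallmatrix} T_0 & S \\ 0 & T_1\end{smallmatrix}\big)$ by analyzing when a non-trivial idempotent can exist in $\{T\}^\prime$. From Proposition \ref{pirre} we already know that any idempotent $P = (P_{ij})$ commuting with $T$ has $P_{21} = 0$, so $P = \big(\begin{smallmatrix} P_{11} & P_{12} \\ 0 & P_{22}\end{smallmatrix}\big)$, and reading off the commutation relations, $P_{11} \in \{T_0\}^\prime$, $P_{22} \in \{T_1\}^\prime$ are idempotents, while the off-diagonal relation becomes $P_{11}S + P_{12}T_1 = T_0 P_{12} + S P_{22}$, i.e. $\sigma_{T_0,T_1}(P_{12}) = P_{11}S - S P_{22}$. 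Since $T_0, T_1$ are in $B_1(\Omega)$, they are irreducible (rank one bundle), so a bounded idempotent commuting with $T_i$ must be $0$ or $I$ --- here I would invoke Lemma \ref{qnl} in the standard way: the difference of two such idempotents, if both commute, need not be quasinilpotent, but an idempotent $Q$ commuting with $T_i$ yields $T_i$-invariant subspaces $\mathrm{ran}\,Q$ and $\ker Q$ giving a (not necessarily orthogonal) reduction; a cleaner route is that the holomorphic frame $\gamma$ satisfies $Q\gamma = \phi\gamma$ for holomorphic $\phi$ with $\phi^2 = \phi$, so $\phi \equiv 0$ or $\phi\equiv 1$, whence $Q = 0$ or $Q = I$ on the span. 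So $(P_{11},P_{22}) \in \{(0,0),(0,I),(I,0),(I,I)\}$.

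Next I would dispose of the four cases. If $P_{11} = P_{22} = 0$, the off-diagonal relation reads $\sigma_{T_0,T_1}(P_{12}) = 0$; if $P_{11} = P_{22} = I$ it reads $\sigma_{T_0,T_1}(P_{12}) = 0$ as well; in either case $P^2 = P$ forces $P_{12}$ to satisfy a constraint, and in fact $P = 0$ or $P = I$ up to an element of $\ker\sigma_{T_0,T_1}$ --- but I must check that such a $P_{12} \in \ker\sigma_{T_0,T_1}$ genuinely gives an idempotent: with $P_{11}=P_{22}=0$, $P^2 = \big(\begin{smallmatrix} 0 & 0 \\ 0 & 0\end{smallmatrix}\big)$ automatically, so $P$ is idempotent only if $P_{12}=0$ (since then $P^2 = 0 = P$ requires nothing, wait --- actually $P = \big(\begin{smallmatrix}0 & P_{12}\\0&0\end{smallmatrix}\big)$ has $P^2 = 0$, so it is idempotent iff $P = 0$, i.e. $P_{12} = 0$); similarly the all-$I$ case forces $P_{12} = 0$. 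So the only way to get a non-trivial idempotent is the mixed case, say $P_{11} = I$, $P_{22} = 0$ (the case $P_{11}=0,P_{22}=I$ is symmetric, handled by passing to $I - P$). Then the off-diagonal relation becomes $S + P_{12}T_1 = T_0 P_{12}$, that is $\sigma_{T_0,T_1}(P_{12}) = -S$, equivalently $S \in \mathrm{ran}\,\sigma_{T_0,T_1}$. Conversely, if $S = \sigma_{T_0,T_1}(Y) = T_0 Y - Y T_1$ for some bounded $Y$, set $P = \big(\begin{smallmatrix} I & -Y \\ 0 & 0\end{smallmatrix}\big)$; one checks directly $P^2 = P$ and $PT = TP$, giving a non-trivial idempotent in $\{T\}^\prime$, so $T$ is not strongly irreducible.

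This establishes both implications: $T$ is strongly irreducible iff there is no non-trivial idempotent in $\{T\}^\prime$ iff $S \notin \mathrm{ran}\,\sigma_{T_0,T_1}$. The main obstacle I anticipate is the bookkeeping in the case analysis --- specifically making sure that when $P_{11}=P_{22}$ the idempotent condition $P^2 = P$ really does kill $P_{12}$ (one needs $P_{11}P_{12} + P_{12}P_{22} = P_{12}$, which with $P_{11}=P_{22}=0$ gives $0 = P_{12}$ and with $P_{11}=P_{22}=I$ gives $2P_{12} = P_{12}$, hence $P_{12}=0$ again), and confirming that in the mixed case $P^2 = P$ imposes no further constraint beyond $P_{11}=I, P_{22}=0$ --- indeed $P^2 = \big(\begin{smallmatrix} I & P_{12}\cdot 0 + I\cdot P_{12} \\ 0 & 0\end{smallmatrix}\big)$, wait one computes $(P^2)_{12} = P_{11}P_{12} + P_{12}P_{22} = P_{12}$, so it is automatic. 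A secondary subtlety is justifying $\{T_i\}^\prime$ has no non-trivial idempotent: strong irreducibility of operators in $B_1(\Omega)$ is classical, but I would give the one-line frame argument above to keep the proof self-contained.
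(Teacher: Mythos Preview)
Your approach matches the paper's almost exactly: reduce to upper-triangular $P$ via Proposition~\ref{pirre}, use strong irreducibility of $T_0,T_1\in B_1(\Omega)$ to force $P_{11},P_{22}\in\{0,I\}$, and then do the four-case analysis, with the converse handled by writing down an explicit idempotent. The only defect is a sign slip that propagates: from $S+P_{12}T_1=T_0P_{12}$ you get $\sigma_{T_0,T_1}(P_{12})=S$, not $-S$; correspondingly, in the converse, if $S=\sigma_{T_0,T_1}(Y)=T_0Y-YT_1$ the idempotent that actually commutes with $T$ is $P=\big(\begin{smallmatrix} I & Y\\ 0 & 0\end{smallmatrix}\big)$, not $\big(\begin{smallmatrix} I & -Y\\ 0 & 0\end{smallmatrix}\big)$ --- with your choice the $(1,2)$ block of $PT$ is $S-YT_1$ while that of $TP$ is $-T_0Y$, and these agree only if $S=0$. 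Fix the sign and the proof is complete.
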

\begin{proof}
Let $P$ be an idempotent in the commutant $\{T\}^{\prime}$ of the operator $T$.  The proof of the 
Proposition \ref{pirre} shows that $P$ must be upper triangular: $\begin{pmatrix}
P_{11} & P_{12} \\
0 & P_{22} \\
\end{pmatrix}$. The commutation relation $PT=TP$ gives us $P_{11}T_0= T_0P_{11}$, $P_{22}T_1= T_1P_{22}$
and \begin{eqnarray}\label{esirre1}P_{11}S-S P_{22}= T_0P_{12}-P_{12}T_1.\end{eqnarray}
Since $P_{i+1i+1}\in\{T_{i}\}^{\prime}$ for $0\leq i\leq 1$, it follows that 
$P_{ii}$ can be either $I$ or $0$. If either $P_{11}=I$ and 
$P_{22}=0$ or $P_{11}=0$ and $P_{22}=I$, then $S$ is in
$\mbox{ran}\;\sigma_{T_0,T_1}$ contradicting  our
assumption. Thus $P$ is of the form $\begin{pmatrix}
I & P_{12} \\
0 & I \\ \end{pmatrix}$ or $\begin{pmatrix}
0 & P_{12} \\
0 & 0 \\
\end{pmatrix}.$ Since $P$ is an idempotent operator, we must have $P_{12}=0$. Hence $T$ is strongly irreducible. 

Assume that the operator  $S$ is in $\mbox{ran}\;\sigma_{T_0,T_1}.$ In this case, we show that $T$ cannot be strongly irreducible completing the proof. Since \mbox{$S\in
\mbox{ran}\;\sigma_{T_0,T_1}$}, we can find an operator
$P_{12}$ such that
\begin{eqnarray}\label{esirre2}S&=&\sigma_{T_0,T_1}(P_{12})\nonumber\\
&=&T_0P_{12}-P_{12}T_1.\end{eqnarray} The operator
$P=\begin{pmatrix}
I & P_{12} \\
0 & 0 \\ \end{pmatrix}$  is an idempotent operator. We have 
\begin{eqnarray}\label{esirre3}
\begin{pmatrix}
I & P_{12} \\
0 & 0 \\ \end{pmatrix}\begin{pmatrix}
T_0 & S \\
0 & T_1 \\ \end{pmatrix}=\begin{pmatrix}
T_0 & S+P_{12}T_1 \\
0 & 0 \\ \end{pmatrix}
\end{eqnarray}
and \begin{eqnarray}\label{esirre4}
\begin{pmatrix}
T_0 & S \\
0 & T_1 \\ \end{pmatrix}\begin{pmatrix}
I & P_{12} \\
0 & 0 \\ \end{pmatrix}=\begin{pmatrix}
T_0 & T_0 P_{12} \\
0 & 0 \\ \end{pmatrix}.
\end{eqnarray}
From these equations, 
% (\ref{esirre2}), (\ref{esirre3}) and (\ref{esirre4}), 
we have $PT=TP$ proving that the operator $T$ is not strongly irreducible.
\end{proof}

\section{Rigidity of the flag structure}
There are two possible generalizations of the class $\mathcal FB_2(\Omega)$ to operators in $B_n(\Omega)$ for an arbitrary $n\in \mathbb N.$ One of these is more restrictive but has the advantage of possessing a tractable set of complete unitary invariants. In both cases, the inherent flag structure is rigid as will be seen below. 
\begin{defn} We let $\mathcal{F}B_n(\Omega)$ be the set of all bounded linear operators $T$ defined on some complex separable Hilbert space $\mathcal H = \mathcal H_0 \oplus \cdots \oplus \mathcal H_{n-1},$ which are of the form 
$$ T=\begin{pmatrix}
T_{0} & S_{0,1} & S_{0,2}&\cdots&S_{0,n-1}\\
0 &T_{1}&S_{1,2}&\cdots&S_{1,n-1} \\
\vdots&\ddots &\ddots&\ddots&\vdots\\
0&\cdots&0&T_{n-2}&S_{n-2,n-1}\\
0&\cdots&\cdots&0&T_{n-1}\\
\end{pmatrix},$$
where the operator $T_i:\mathcal H_i \to \mathcal H_i,$ defined on the complex separable Hilbert space $\mathcal H_i,$ $0\leq i \leq n-1,$ is assumed to be in $B_1(\Omega)$ and  $S_{i,i+1}:\mathcal H_{i+1} \to \mathcal H_i,$ is assumed to be a non-zero intertwining operator, namely, $T_iS_{i,i+1}=S_{i,i+1}T_{i+1},$  $0\leq i \leq n-2.$
\end{defn}
Even without mandating the intertwining condition, the set of operators described above belong to the Cowen-Douglas class $B_n(\Omega).$ An inductive proof presents no difficulty starting with base case of $n=2,$ which was proved in the previous section. 
Therefore, in particular, $\mathcal FB_n(\Omega) \subseteq B_n(\Omega).$ The proof is a straightforward induction on $n.$ 
The following proposition is the first step in the proof of the rigidity theorem.
\begin{prop}\label{dp}
If $X$ is an invertible operator intertwining two operators  $T$ and $\tilde{T}$ from $\mathcal{F}B_n(\Omega)$, then $X$ is upper  triangular.
\end{prop}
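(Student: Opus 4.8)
The plan is to imitate the two arguments already given for Proposition \ref{mainp} in the case $n=2$: the frame-theoretic argument via Lemma \ref{lut}, and the more robust operator-theoretic argument using Lemma \ref{qnl} and Theorem \ref{hlem}. The second of these is the one that generalizes, so I would run it inductively. Write $X = (X_{ij})_{i,j=0}^{n-1}$ with respect to the decompositions $\mathcal H = \oplus \mathcal H_i$ and $\tilde{\mathcal H} = \oplus \tilde{\mathcal H}_i$; the goal is to show $X_{ij} = 0$ whenever $i > j$. The intertwining relation $XT = \tilde T X$ together with $TX^{-1} = X^{-1}\tilde T$ gives a system of operator equations relating the blocks $X_{ij}$, the operators $T_i, \tilde T_i$, and the entries $S_{i,i+1}$, $\tilde S_{i,i+1}$; in particular, reading off the bottom-left corner, one gets $X_{n-1,0} T_0 = \tilde T_{n-1} X_{n-1,0}$, i.e. $X_{n-1,0}$ intertwines $T_0 \in B_1(\Omega)$ and $\tilde T_{n-1} \in B_1(\Omega)$.

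The key step is to show the ``lowest'' subdiagonal block $X_{i+1,i}$ vanishes for each $i$. Following the second proof of Proposition \ref{mainp}, one forms a suitable product of blocks — schematically, $X_{i+1,i}\, S_{i,i+1}\, (X^{-1})_{\cdots}\, \tilde S_{\cdots}$ — and shows it lies in $\mathrm{ran}\,\sigma_{\tilde T_{i+1}} \cap \ker\, \sigma_{\tilde T_{i+1}}$, hence is quasi-nilpotent by Theorem \ref{hlem} and therefore zero by Lemma \ref{qnl} (since $\tilde T_{i+1} \in B_1(\Omega)$). Because each $S_{i,i+1}$ and each $\tilde S_{i,i+1}$ has dense range (Proposition \ref{dr}), cancelling these factors forces $X_{i+1,i} = 0$. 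Once all the immediate subdiagonal blocks vanish, $X$ and $\tilde T X = X T$ are block upper triangular up to the first subdiagonal, and one can re-examine the equation $XT = \tilde T X$: the blocks on the next subdiagonal down, $X_{i+2,i}$, now satisfy equations of exactly the same shape with the roles of $T_i, T_{i+1}$ played by compressions that are again in $B_1(\Omega)$, so the same quasi-nilpotency argument applies. Proceeding by induction on the distance $i - j$ below the diagonal, every strictly-lower block is killed.

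The main obstacle I anticipate is bookkeeping: in the $n=2$ case the product $U_{21} S U_{12}^* \tilde S$ closes up neatly because there is essentially one off-diagonal block and one intertwiner, whereas for general $n$ the analogous product involves several blocks of $X$ and of $X^{-1}$ and several $S_{i,i+1}$'s, and one must check that the relevant telescoping still puts it simultaneously in the range and kernel of $\sigma_{\tilde T_{i+1}}$. The cleanest way around this is probably not to chase the full matrix but to set up the induction so that at each stage one only ever compares two consecutive ``levels'': having already arranged that $X$ is upper triangular down to subdiagonal $k$, compress $XT = \tilde T X$ to the $2\times 2$ block formed by coordinates $i$ and $i+k+1$, observe that the resulting relation is formally identical to the $n=2$ situation (with intertwiner the appropriate composition $S_{i,i+1}\cdots S_{i+k,i+k+1}$, which still has dense range by Proposition \ref{dr} applied iteratively), and invoke the $n=2$ argument verbatim. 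One should also note where invertibility of $X$ is used — it is needed to guarantee that the relevant corner blocks of $X$ and $X^{-1}$ have dense range, exactly as $U$ being unitary was used in Proposition \ref{mainp}; in the present statement $X$ is merely invertible, so one argues via the $B_1(\Omega)$ intertwining relations that the pertinent blocks are injective with dense range rather than unitary.
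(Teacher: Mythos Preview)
Your proposal assembles the right ingredients --- Lemma \ref{qnl}, Theorem \ref{hlem}, and the dense range of the $S_{i,i+1}$ --- but the inductive scheme runs in the wrong direction. You propose to kill the first subdiagonal blocks $X_{i+1,i}$ first and then work outward to $X_{i+2,i}$, $X_{i+3,i}$, etc. This does not work: the block equation for $X_{i+1,i}$ coming from $XT=\tilde TX$ already involves every strictly-lower block in the same row and column. Concretely, for $n=3$ the $(1,0)$-entry of $XT=\tilde TX$ reads $X_{1,0}T_0 = \tilde T_1 X_{1,0} + \tilde S_{1,2}X_{2,0}$ and the $(2,1)$-entry reads $X_{2,0}S_{0,1}+X_{2,1}T_1=\tilde T_2 X_{2,1}$, so neither first-subdiagonal block can be isolated before the corner block $X_{2,0}$ is known to vanish. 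Your proposed cure --- compressing $XT=\tilde TX$ to the $2\times2$ submatrix on coordinates $i$ and $i+k+1$ --- does not help, because compression does not commute with matrix multiplication: $P(XT)P\neq (PXP)(PTP)$ in general, and the error terms are exactly the unknown off-block entries you have not yet controlled.

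The paper's induction goes the other way. One first attacks $X_{n,1}$ (the corner furthest from the diagonal), where the block equation $X_{n,1}T_0=\tilde T_{n-1}X_{n,1}$ is uncoupled from all other lower blocks; then, having $X_{n,1}=0$, one marches along the bottom row showing $X_{n,2}=0,\ldots,X_{n,n-1}=0$ in that order. With the entire last row cleared, the upper-left $(n{-}1)\times(n{-}1)$ block of $X$ intertwines the corresponding compressions of $T$ and $\tilde T$, which lie in $\mathcal FB_{n-1}(\Omega)$, and the induction on $n$ closes. The paper also uses $Y=X^{-1}$ and the companion relation $TY=Y\tilde T$ in an essential way: the product placed in $\ker\sigma_{\tilde T_{n-1}}\cap\mbox{ran}\,\sigma_{\tilde T_{n-1}}$ is $X_{n,1}S_{0,1}\cdots S_{n-2,n-1}\,Y_{n,1}\,\tilde S_{0,1}\cdots\tilde S_{n-2,n-1}$, and it is the corner block $Y_{n,1}$ of the inverse that supplies the dense-range factor needed to cancel down (together with a dichotomy ``$X_{n,1}=0$ or $Y_{n,1}=0$'' that must be handled). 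You gesture at using $X^{-1}$ but do not identify which block plays this role; in the $n=2$ proof you are modeling on it was $U_{12}^*$, and the correct analogue here is again the far corner of the inverse, not an adjacent subdiagonal block.
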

\begin{proof}
The proof is by induction on $n.$ To begin the induction, for $n=2,$ following the method of the proof in Proposition \ref{pirre}, we see that an invertible intertwining operator between two operators in $\mathcal FB_2(\Omega)$ must be upper triangular. Now, assume that any invertible intertwiner $X$ between two operators in $\mathcal F B_k(\Omega)$ is upper triangular for all $k< n.$  Let $Y=X^{-1}$ and $X=\big (\!\!\big (X_{i,j}\big )\!\!\big )_{n\times n},$  $Y=\big (\!\!(Y_{i,j}\big )\!\!\big )_{n\times n}$ be the block decompositions of the two operators $X$ and $Y,$ respectively.

{\sf Step 1: To show that $X_{n,1}=0$ or $Y_{n,1}=0$.}
We have that $XT = \tilde{T}X$ and consequently 
\begin{eqnarray}\label{he1} X_{n,1}T_0=\tilde{T}_{n-1}X_{n,1},\,\,X_{n,1}S_{0,1}+X_{n,2}T_{1}=\tilde{T}_{n-1}X_{n,2}.\end{eqnarray}
Since $T_{k}S_{k,k+1}=S_{k,k+1}T_{k+1}$ for $k=0,1,2,\cdots n-1,$ multiplying the second equation in \eqref{he1} by $S_{1,2}\cdots S_{n-2,n-1},$ and replacing $T_{1}S_{1,2}\cdots
S_{n-2,n-1}$ with $S_{1,2}\cdots
S_{n-2,n-1}T_{n-1},$ we have 
\begin{equation}\label{3.11}
\begin{array}{lllll}
%X_{n,1}S_{0,1}S_{1,2}\cdots S_{n-2,n-1}+X_{n,2}T_{1}S_{1,2}\cdots
%S_{n-2,n-1}&=&\tilde{T}_{n-1}X_{n,2}S_{1,2}\cdots S_{n-2,n-1}
%\\
X_{n,1}S_{0,1}S_{1,2}\cdots S_{n-2,n-1}+X_{n,2}S_{1,2}\cdots
S_{n-2,n-1}T_{n-1}&=&\tilde{T}_{n-1}X_{n,2}S_{1,2}\cdots
S_{n-2,n-1}. 
\end{array}
\end{equation}
We also have $TY=Y\tilde{T}$  giving us the set of equations  
\begin{eqnarray}\label{he2}T_{n-1}Y_{n,1}=Y_{n,1}\tilde{T}_0,\,\,  \tilde{T}_{k}\tilde{S}_{k,k+1}=\widetilde{S}_{k,k+1}\tilde{T}_{k+1},
\,\, k=0,1,2,\cdots n.\end{eqnarray}  
Now, multiply both sides of the equation \eqref{3.11} by $Y_{n,1},$ using the commutation $T_{n-1}Y_{n,1} = Y_{n,1}\tilde{T}_0,$ then again multiplying both sides of the resulting equation by $\widetilde{S}_{0,1}\cdots \widetilde{S}_{n-2,n-1}$ and finally using the commutation relations \eqref{he2}, we have 
%we have
%$$\begin{array}{lllll}
%X_{n,1}S_{0,1}S_{1,2}\cdots
%S_{n-2,n-1}Y_{n,1}+X_{n,2}S_{1,2}\cdots
%S_{n-2,n-1}T_{n-1}Y_{n,1}&=&\widetilde{T}_{n-1}X_{n,2}S_{1,2}\cdots
%S_{n-2,n-1}Y_{n,1}\\
%X_{n,1}S_{0,1}S_{1,2}\cdots
%S_{n-2,n-1}Y_{n,1}+X_{n,2}S_{1,2}\cdots
%S_{n-2,n-1}Y_{n,1}\widetilde{T}_0&=&\widetilde{T}_{n-1}X_{n,2}S_{1,2}\cdots
%S_{n-2,n-1}Y_{n,1}\\
%\end{array}$$
%And
%$$\begin{array}{lllll}
% X_{n,1}S_{0,1}S_{1,2}\cdots S_{n-2,n-1}Y_{n,1}\widetilde{S}_{0,1}\cdots
%\widetilde{S}_{n-2,n-1}&+&X_{n,2}S_{1,2}\cdots
%S_{n-2,n-1}Y_{n,1}\widetilde{T}_0\tilde{S}_{0,1}\cdots
%\tilde{S}_{n-2,n-1}\\&=&\widetilde{T}_{n-1}X_{n,2}S_{1,2}\cdots
%S_{n-2,n-1}Y_{n,1}\tilde{S}_{0,1}\cdots \tilde{S}_{n-2,n-1}\\
\begin{align}\label{3.13}
X_{n,1}S_{0,1}S_{1,2}\cdots
S_{n-2,n-1}Y_{n,1}\tilde{S}_{0,1}\cdots
\tilde{S}_{n-2,n-1}+X_{n,2}S_{1,2}\cdots
S_{n-2,n-1}Y_{n,1}\tilde{S}_{0,1}\cdots
\tilde{S}_{n-2,n-1}\tilde{T}_{n-1}\nonumber\\
=\tilde{T}_{n-1}X_{n,2}S_{1,2}\cdots
S_{n-2,n-1}Y_{n,1}\tilde{S}_{0,1}\cdots \tilde{S}_{n-2,n-1}.&
\end{align}
%\end{array}$$
Therefore, we see that $$X_{n,1}S_{0,1}S_{1,2}\cdots
S_{n-2,n-1}Y_{n,1}\tilde{S}_{0,1}\cdots
\tilde{S}_{n-2,n-1}$$ is in the range of the operator 
$\sigma_{\tilde{T}_{n-1}}.$
Indeed it is also in the kernel of $\sigma_{\tilde{T}_{n-1}},$ as is evident from the following string of equalities: 
\begin{eqnarray*}
X_{n,1}S_{0,1}\cdots
S_{n-2,n-1}Y_{n,1}\tilde{S}_{0,1}\cdots
\tilde{S}_{n-2,n-1}\tilde{T}_{n-1}
&=&X_{n,1}S_{0,1}\cdots
S_{n-2,n-1}Y_{n,1}\tilde{T}_0\tilde{S}_{0,1}\cdots \tilde{S}_{n-2,n-1}\\
&=&X_{n,1}S_{0,1}\cdots
S_{n-2,n-1}T_{n-1}Y_{n,1}\tilde{S}_{0,1}\cdots \tilde{S}_{n-2,n-1}\\
&=&X_{n,1}T_0S_{0,1}\cdots
S_{n-2,n-1}Y_{n,1}\tilde{S}_{0,1}\cdots \tilde{S}_{n-2,n-1}\\
&=&\tilde{T}_{n-1}X_{n,1}S_{0,1}\cdots
S_{n-2,n-1}Y_{n,1}\tilde{S}_{0,1}\cdots
\tilde{S}_{n-2,n-1}.
\end{eqnarray*}
Thus
$$X_{n,1}S_{0,1}S_{1,2}\cdots S_{n-2,n-1}Y_{n,1}\tilde{S}_{0,1}\cdots
\tilde{S}_{n-2,n-1}\in \ker \sigma_{\tilde{T}_{n-1}}\cap
\mbox{ran}\,{\sigma_{\tilde{T}_{n-1}}}.$$
Consequently, using Lemma \ref{qnl} and Theorem \ref{hlem}, we conclude that 
$$X_{n,1}S_{0,1}S_{1,2}\cdots
S_{n-2,n-1}Y_{n,1}\tilde{S}_{0,1}\cdots
\tilde{S}_{n-2,n-1}=0.$$

By hypothesis, all the operators $S_{k,k+1}, \tilde{S}_{k,k+1}, k=0,2,\cdots n-2$ have dense range. If $Y_{n,1}\neq 0,$ then  equation (\ref{he2}) and
Proposition \ref{dr} ensure that  $Y_{n,1}$ has  dense range.  Hence $X_{n,1}=0.$  
%We show that if $Y_{n,1}=0,$ then it must be  upper triangular. 

{\sf Step 2: For $0<i<n,$ we have $X_{n,i}=0$ or $Y_{n,i}=0.$} 
Assume that $Y_{n,1}$ has dense range and $X_{n,1}=0.$ In this case, 
\begin{eqnarray}\label{he3}X_{n,2}T_1 =\tilde{T}_{n-1} X_{n,2},\,\, X_{n,2}S_{1,2}+X_{n,3}T_{2}=\tilde{T}_{n-1}X_{n,3}.
\end{eqnarray} 
As in the proof of Step 1, we have 
\begin{eqnarray}\label{he4}
%X_{n,2}S_{1,2}S_{2,3}\cdots S_{n-2,n-1}+X_{n,3}T_{2}S_{2,3}\cdots
%S_{n-2,n-1}&=&\widetilde{T}_{n-1}X_{n,3}S_{2,3}\cdots S_{n-2,n-1}\nonumber\\
X_{n,2}S_{1,2}S_{2,3}\cdots S_{n-2,n-1}+X_{n,3}S_{2,3}\cdots
S_{n-2,n-1}T_{n-1}&=&\tilde{T}_{n-1}X_{n,3}S_{2,3}\cdots
S_{n-2,n-1} .\end{eqnarray}
Computations as in the proof of Step 1, using equation (\ref{he4}), show that 
%$$\begin{array}{lllll}
%X_{n,2}S_{1,2}S_{2,3}\cdots
%S_{n-2,n-1}Y_{n,1}+X_{n,3}S_{2,3}\cdots
%S_{n-2,n-1}T_{n-1}Y_{n,1}&=&\widetilde{T}_{n-1}X_{n,3}S_{2,3}\cdots
%S_{n-2,n-1}Y_{n,1}\\
%X_{n,2}S_{1,2}S_{2,3}\cdots
%S_{n-2,n-1}Y_{n,1}+X_{n,3}S_{2,3}\cdots
%S_{n-2,n-1}Y_{n,1}\widetilde{T}_0&=&\widetilde{T}_{n-1}X_{n,3}S_{2,3}\cdots
%S_{n-2,n-1}Y_{n,1}
%\end{array}$$
%Furthermore,
%$$\begin{array}{lllll}
% X_{n,2}S_{1,2}S_{2,3}\cdots S_{n-2,n-1}Y_{n,1}\tilde{S}_{0,1}\cdots
%\tilde{S}_{n-2,n-1}&+&X_{n,3}S_{2,3}\cdots
%S_{n-2,n-1}Y_{n,1}\widetilde{T}_0\tilde{S}_{0,1}\cdots
%\tilde{S}_{n-2,n-1}\\&=&\widetilde{T}_{n-1}X_{n,3}S_{2,3}\cdots
%S_{n-2,n-1}Y_{n,1}\tilde{S}_{0,1}\cdots \tilde{S}_{1,2}\\
%X_{n,2}S_{1,2}S_{2,3}\cdots
%S_{n-2,n-1}Y_{n,1}\tilde{S}_{0,1}\cdots
%\tilde{S}_{n-2,n-1}&+&X_{n,3}S_{2,3}\cdots
%S_{n-2,n-1}Y_{n,1}\tilde{S}_{0,1}\cdots
%\tilde{S}_{n-2,n-1}\widetilde{T}_{n-1}\\&=&\widetilde{T}_{n-1}X_{n,3}S_{2,3}\cdots
%S_{n-2,n-1}Y_{n,1}\tilde{S}_{0,1}\cdots \tilde{S}_{n-2,n-1}\\
%
%\end{array}$$
$$X_{n,2}S_{1,2}S_{2,3}\cdots S_{n-2,n-1}Y_{n,1}\tilde{S}_{0,1}\cdots
\tilde{S}_{n-2,n-1}\in
\mbox{ran}_{\sigma_{\tilde{T}_{n-1}}}\cap \ker \sigma_{\tilde{T}_{n-1}}.$$ 
%Also notice that
%$$X_{n,2}S_{1,2}S_{2,3}\cdots S_{n-2,n-1}Y_{n,1}\tilde{S}_{0,1}\cdots
%\tilde{S}_{1,2}\in  \{\widetilde{T}_{n-1}\}^{\prime}.$$
%Similar to the proof in step 1, 
Since $Y_{n,1}$ has dense range,
it follows that $X_{n,2}=0.$
For  $i<n-1,$ we also have 
$$X_{n,i}S_{i-1,i}S_{i,i+1}\cdots S_{n-2,n-1}Y_{n,1}\tilde{S}_{0,1}\cdots
\tilde{S}_{n-2,n-1}\in
\mbox{ran}_{\sigma_{\tilde{T}_{n-1}}}\cap \ker \sigma_{\tilde{T}_{n-1}}.$$ 
%And
%$$X_{n,i}S_{i-1,i}S_{i,i+1}\cdots S_{n-2,n-1}Y_{n,1}\tilde{S}_{0,1}\cdots
%\tilde{S}_{n-2,n-1}\in  \{\widetilde{T}_{n-1}\}^{\prime}.$$
again, since $Y_{n,1}$ has  dense range, it follows that
$X_{n,i}=0,$ for all $i<n-1.$

Let us write the operator $X,$  in the form of a $2\times 2$ block matrix as $\begin{pmatrix} X_{n-1\times n-1} &  X_{n-1\times 1}\\ 0 & X_{n,n} \end{pmatrix},$ where if $X= \big (\!\big( X_{i,j}\big )\!\!\big)_{i,j=1}^n,$ then $X_{n-1\times n-1}$ is the operator $\big (\!\big( X_{i,j}\big )\!\!\big)_{i,j=1}^{n-1}$ and $X_{n-1\times 1}$ is the operator $\big (\!\big( X_{i\,n} \big )\!\!\big)_{i=1}^{n-1}.$  We assign a similar meaning to  the operators $T_{n-1\times n-1}$ and $\tilde{T}_{n-1\times n-1}$ after writing the operators $T$ and $\tilde{T}$ in the form of $2\times 2$ block matrices with respect to the same decomposition as of the operator $X$. The  $(2,1)$ entry in these block matrices is $0.$ By assumption, we have $XT =\tilde{T} X,$ which shows that $X_{n-1\times n-1}T_{n-1\times n-1} = \tilde{T}_{n-1\times n-1} X_{n-1\times n-1}.$ Now, the induction hypothesis guarantees that $X_{n-1\times n-1}$ must be upper triangular completing the proof.

If $X$ is an upper triangular block matrix, then 
$Y=X^{-1}$ must also be upper triangular. In fact, since
$XT=\tilde{T}X$ and $X$ is upper triangular, we have that
$X_{ii}T_{n-i}=\tilde{T}_{n-i}X_{ii}$ for all $i \leq n,$ consequently, $X_{ii}$ has dense range. Since $XY=YX=I,$ an
easy computation shows that $X_{nn}Y_{n,i}=0, X_{nn}Y_{nn}=I.$ It
follows that $Y_{n,i}=0.$ Then $Y$ is also seen to be upper triangular as in the proof of Step 2. 
\end{proof}
It is much easier to show that an operator in the commutant of $T\in \mathcal FB_n(\Omega)$ is upper triangular.   
\begin{prop}\label{utp}
Suppose $T$ is in $\mathcal{F}B_n(\Omega)$ and $X$ is a
bounded linear operator in the commutant of $T.$ Then $X$ is upper triangular.
\end{prop}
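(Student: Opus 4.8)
The plan is to induct on $n$, taking the base case $n=2$ from the proof of Proposition \ref{pirre}: the argument there placing $P_{21}S$ in $\ker\sigma_{T_1}\cap\operatorname{ran}\sigma_{T_1}$ uses only the commutation relation $XT=TX$, not that $X$ is a projection, so it already shows the lower-left block of an operator commuting with an element of $\mathcal{F}B_2(\Omega)$ vanishes. For the inductive step, index the blocks by $0,\dots,n-1$ to match the definition of $\mathcal{F}B_n(\Omega)$, write $X=\big(\!\big(X_{i,j}\big)\!\big)_{i,j=0}^{n-1}$, and record that $XT=TX$ amounts, for $0\le i,j\le n-1$, to
\begin{equation*}
X_{i,j}T_j+X_{i,j-1}S_{j-1,j}=T_iX_{i,j}+S_{i,i+1}X_{i+1,j},
\end{equation*}
with the terms $X_{i,-1}S_{-1,0}$ and $S_{n-1,n}X_{n,j}$ read as $0$.

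The main step will be to show that the bottom row is zero below the diagonal, $X_{n-1,j}=0$ for $0\le j\le n-2$, which I would carry out by an inner induction on $j$. At $j=0$ the identity above (with $i=n-1$) is just $X_{n-1,0}T_0=T_{n-1}X_{n-1,0}$. Assuming $X_{n-1,j'}=0$ for all $j'<j$, the identities at $(n-1,j)$ and $(n-1,j+1)$ reduce to $X_{n-1,j}T_j=T_{n-1}X_{n-1,j}$ and $X_{n-1,j}S_{j,j+1}=T_{n-1}X_{n-1,j+1}-X_{n-1,j+1}T_{j+1}$. I would then put $A:=X_{n-1,j}S_{j,j+1}S_{j+1,j+2}\cdots S_{n-2,n-1}$ and check two things: multiplying the second identity on the right by $S_{j+1,j+2}\cdots S_{n-2,n-1}$ and sliding each $T_k$ across $S_{k,k+1}$ via $T_kS_{k,k+1}=S_{k,k+1}T_{k+1}$ exhibits $A$ as $\sigma_{T_{n-1}}$ applied to $X_{n-1,j+1}S_{j+1,j+2}\cdots S_{n-2,n-1}$, so $A\in\operatorname{ran}\sigma_{T_{n-1}}$; while sliding $T_{n-1}$ leftward through $S_{j,j+1}\cdots S_{n-2,n-1}$ and then applying $X_{n-1,j}T_j=T_{n-1}X_{n-1,j}$ gives $AT_{n-1}=T_{n-1}A$, so $A\in\ker\sigma_{T_{n-1}}$. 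Theorem \ref{hlem} then makes $A$ quasi-nilpotent, Lemma \ref{qnl} makes $A=0$ since it commutes with $T_{n-1}\in B_1(\Omega)$, and since each $S_{k,k+1}$ — hence the composition $S_{j,j+1}\cdots S_{n-2,n-1}$ — has dense range by Proposition \ref{dr}, this forces $X_{n-1,j}=0$.

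Once the strictly subdiagonal part of the bottom row is known to vanish, the relations $XT=TX$ restricted to indices $i,j\le n-2$ involve only the entries $\big(\!\big(X_{i,j}\big)\!\big)_{i,j=0}^{n-2}$, because the one term that could reach row $n-1$, namely $S_{n-2,n-1}X_{n-1,j}$, is now zero. Hence the compression $\big(\!\big(X_{i,j}\big)\!\big)_{i,j=0}^{n-2}$ lies in the commutant of $\big(\!\big(T_{i,j}\big)\!\big)_{i,j=0}^{n-2}\in\mathcal{F}B_{n-1}(\Omega)$ and is upper triangular by the induction hypothesis, which finishes the proof. I expect the only genuine obstacle to be the verification that the long product $A$ lies in $\ker\sigma_{T_{n-1}}\cap\operatorname{ran}\sigma_{T_{n-1}}$ — the exact analogue of the central computations in Propositions \ref{pirre} and \ref{dp}; everything else is bookkeeping with the intertwining relations and the elementary fact that compositions of dense-range operators have dense range. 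Note that, in contrast with Proposition \ref{dp}, no inverse is available here, which is precisely why the bottom row must be cleared one entry at a time rather than by a symmetric two-sided argument.
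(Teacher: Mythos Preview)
Your approach is essentially the paper's own: clear the strictly sub-diagonal part of the bottom row by placing $X_{n-1,j}S_{j,j+1}\cdots S_{n-2,n-1}$ in $\ker\sigma_{T_{n-1}}\cap\operatorname{ran}\sigma_{T_{n-1}}$, invoke Lemma~\ref{qnl}, Theorem~\ref{hlem} and Proposition~\ref{dr}, and then descend to $\mathcal{F}B_{n-1}(\Omega)$.

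One slip to correct: an operator $T\in\mathcal{F}B_n(\Omega)$ carries entries $S_{i,j}$ for \emph{all} $i<j$, not only for $j=i+1$ (it is the smaller class $\widetilde{\mathcal{F}}B_n(\Omega)$ that is bi-diagonal). Hence your displayed identity for $XT=TX$ is incomplete; the correct form is
\[
X_{i,j}T_j+\sum_{k<j}X_{i,k}S_{k,j}\;=\;T_iX_{i,j}+\sum_{k>i}S_{i,k}X_{k,j}.
\]
This does not actually damage the argument. On the bottom row $i=n-1$ the right-hand sum is empty, and your inner induction hypothesis $X_{n-1,j'}=0$ for $j'<j$ kills every term of the left-hand sum, so the two reduced identities you use are exactly right (this is precisely the paper's equation~\eqref{ute}). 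In the final reduction step the terms coupling the compression to row $n-1$ are the various $S_{i,n-1}X_{n-1,j}$ for $i\le n-2$ and $j\le n-2$, not only $S_{n-2,n-1}X_{n-1,j}$; but all of these vanish once the bottom row has been cleared, so the conclusion stands. Just amend the displayed relation and the phrase ``the one term that could reach row $n-1$'' accordingly.
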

\begin{proof} First we prove that $X_{ni}=0$ for $1\leq i\leq n-1.$
Since $XT=TX,$ we see that
\begin{eqnarray}\label{ute}
X_{n1}T_0=T_{n-1}X_{n1}\;\mbox{and}\;\sum_{k=1}^{i}\big (X_{nk}S_{k-1,i}+X_{n\, i+1}T_i \big ) = T_{n-1}X_{n\, i+1}\,\mbox{for}\,1\leq i\leq n-1.
\end{eqnarray}
From equation (\ref{ute}), putting $i=1,$ we have
\begin{eqnarray}
X_{n1} S_{0,1}S_{1,2}\ldots S_{n-2,n-1}\in \ker\sigma_{T_{n-1}},\label{ute1}\\
X_{n1} S_{0,1}S_{1,2}\ldots S_{n-2,n-1}=
\sigma_{T_{n-1}}(X_{n2}S_{1,2}S_{2,3}\ldots
S_{n-2,n-1})\label{ute2}.
\end{eqnarray}
Therefore $X_{n1} S_{0,1}S_{1,2}\ldots S_{n-2,n-1}$ is in $\mbox{ran}\, \sigma_{T_{n-1}} \cap \ker\sigma_{T_{n-1}}.$   Combining Proposition \ref{dr} with 
Lemma \ref{qnl} and Theorem \ref{hlem}, we conclude that 
%\begin{eqnarray}\label{ute3}
$X_{n1}=0.$
%\end{eqnarray}
For $i=2$, making use of $X_{n1}=0$ in equations (\ref{ute}), we have
\begin{eqnarray}\label{ute4}X_{n2}S_{1,2}S_{2,3}\ldots
S_{n-2,n-1}\in\ker\sigma_{T_{n-1}}\cap
\mbox{ran}\;\sigma_{T_{n-1}}\end{eqnarray} 
leading to the conclusion $X_{n2}=0,$ as before. Continuing in this manner, we conclude $X_{ni}=0$ for $1\leq i \leq n-1.$
%
%Thus from equation
%(\ref{ute4}), Proposition \ref{dr}, Lemma \ref{qnl} and Theorem
%\ref{hlem}, we have
%\begin{eqnarray*}
%X_{n2}=0.
%\end{eqnarray*}
%Continuing this process we get
%$$X_{ni}=0\;\;\;\;\mbox{for}\;\;\;\; 1\leq i\leq n-1.$$
%{\bf Steps (2):-} 
To complete the proof, we use the same idea as in the concluding part of the proof in Step $2$ of the Proposition \ref{dp}.
%, we get $$X_{ij}=0
%\;\;\;\;\mbox{for}\;\;\;\; i<j.$$
\end{proof}
\subsection{\large \sf Rigidity}Finally, we prove a rigidity theorem for the operators in $\mathcal FB_n(\Omega).$ In other words, we show that any intertwining unitary between two operators in the class $\mathcal FB_n(\Omega)$ must be diagonal. We refer to this phenomenon as ``rigidity.''
\begin{thm}\label{hdu}
Any two operators $T$ and $\tilde{T}$ in  $\mathcal{F}B_n(\Omega)$
are unitarily equivalent if and
only if there exists unitary operators $U_i$,  $0\leq i\leq n-1$,
such that $U_iT_i=\tilde{T}_iU_i$ and
$U_iS_{i,j}=\tilde{S}_{i,j}U_j,$  $i<j.$
\end{thm}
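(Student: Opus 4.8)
The plan is to derive the theorem as a quick consequence of Proposition~\ref{dp}.

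For the ``if'' direction, given unitaries $U_i$, $0\le i\le n-1$, with $U_iT_i=\tilde T_iU_i$ and $U_iS_{i,j}=\tilde S_{i,j}U_j$ for $i<j$, I would form the block-diagonal operator $U=\bigoplus_{i=0}^{n-1}U_i$ on $\mathcal H_0\oplus\cdots\oplus\mathcal H_{n-1}$, which is unitary since each $U_i$ is. Comparing the $(i,j)$ blocks of $UT$ and $\tilde TU$ one finds $(UT)_{ij}=U_iT_{ij}$ and $(\tilde TU)_{ij}=\tilde T_{ij}U_j$, where $T_{ij},\tilde T_{ij}$ denote the corresponding blocks (so $T_{ii}=T_i$, $T_{ij}=S_{i,j}$ for $i<j$, and $T_{ij}=0$ for $i>j$). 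The hypotheses say exactly that these agree for $i\le j$, and they agree trivially for $i>j$; hence $UT=\tilde TU$, so $T$ is unitarily equivalent to $\tilde T$.

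For the ``only if'' direction, suppose $U$ is a unitary with $UT=\tilde TU$. Since $U$ is invertible, Proposition~\ref{dp} shows $U$ is upper triangular. Rewriting the intertwining relation as $TU^{-1}=U^{-1}\tilde T$ exhibits $U^{-1}=U^*$ as an invertible operator intertwining $\tilde T$ and $T$, so Proposition~\ref{dp} applies again and $U^*$ is upper triangular as well. Taking adjoints, $U$ is then also lower triangular, hence block diagonal. Writing $U=\bigoplus_{i=0}^{n-1}U_i$ with $U_i\colon\mathcal H_i\to\tilde{\mathcal H}_i$, the identities $U^*U=I$ and $UU^*=I$ force each $U_i$ to be unitary. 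Finally, comparing block entries of $UT=\tilde TU$ exactly as in the first part gives $U_iT_i=\tilde T_iU_i$ on the diagonal and $U_iS_{i,j}=\tilde S_{i,j}U_j$ for $i<j$, completing the proof.

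The substantive work has already been carried out in Proposition~\ref{dp}; here the only real idea is to apply it both to $U$ and to $U^{-1}$ and combine ``upper triangular'' with ``lower triangular'' to obtain ``diagonal.'' I do not anticipate a genuine obstacle beyond keeping the block bookkeeping (which summand maps to which, and the direction of each intertwiner) consistent with the conventions fixed in the definition of $\mathcal{F}B_n(\Omega)$.
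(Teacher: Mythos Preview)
Your argument is correct. The ``if'' direction is exactly as in the paper (a routine block check), and for the ``only if'' direction your idea of applying Proposition~\ref{dp} twice---once to $U$ and once to $U^{-1}=U^*$---is valid: from $UT=\tilde TU$ one gets $U^{-1}\tilde T=TU^{-1}$, both $T,\tilde T$ lie in $\mathcal FB_n(\Omega)$, and $U^{-1}$ is invertible, so Proposition~\ref{dp} indeed gives $U^{-1}$ upper triangular; taking adjoints makes $U$ lower triangular as well, hence diagonal.

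This is a genuinely different route from the paper's. The paper applies Proposition~\ref{dp} only once to get $U$ upper triangular, and then peels off the rows one at a time: from $U^*U=I$ it reads off $U_{11}^*U_{11}=I$ and $U_{1j}^*U_{11}=0$; the intertwining $U_{11}T_0=\tilde T_0U_{11}$ together with Proposition~\ref{dr} upgrades the isometry $U_{11}$ to a unitary, which forces $U_{1j}=0$, and an induction down the rows finishes the job. Your approach trades this inductive isometry/dense-range argument for a second invocation of Proposition~\ref{dp}, which is cleaner and avoids appealing to Proposition~\ref{dr}; the paper's route, on the other hand, makes explicit along the way why each diagonal block is unitary. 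It is worth noting that the fact you exploit---that $X^{-1}$ is upper triangular whenever $X$ is---is actually established inside the proof of Proposition~\ref{dp} itself, so your shortcut is entirely in the spirit of that proposition.
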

\begin{proof}
Clearly, it is enough to prove the necessary part of this statement. Let $U$ be an unitary operator such that
$UT=\tilde{T}U.$
By Proposition \ref{dp}, $U$ must be upper triangular, say $U= \big ( \!\!\big ( U_{ij} \big) \!\!\big )_{i,j=1}^n$ with $U_{ij}=0$ whenever $i >j.$  
%Notice that
%\begin{eqnarray*}&& \begin{pmatrix}
%U_{11}& U_{12}& \cdots& U_{1n-1}& U_{1n}\\
%0& U_{22}& \cdots& U_{2n-1}& U_{2n}\\
%\vdots& \vdots&\ddots& \vdots&\vdots\\
%0& 0& \cdots&U_{n-1n-1}&U_{n-1n}\\
%0&0&\cdots&0&U_{nn}\\
%\end{pmatrix}\begin{pmatrix}
%T_{0} & S_{0,1} & S_{0,2}&\cdots&S_{0,n-1}\\
%0&T_{1}&S_{1,2}&\cdots&S_{1,n-1} \\
%\vdots&\ddots&\ddots&\ddots&\vdots\\
%0&\cdots&0&T_{n-2}&S_{n-2,n-1}\\
%0&\cdots&\cdots&0&T_{n-1}\\
%\end{pmatrix}\\&=&\begin{pmatrix}
%\widetilde{T}_{0} & \tilde{S}_{0,1} & \tilde{S}_{0,2}&\cdots&\tilde{S}_{0,n-1}\\
%0&\widetilde{T}_{1}&\tilde{S}_{1,2}& \cdots&S_{1,n-1}\\
%\vdots&\ddots&\ddots&\ddots&\vdots\\
%0&\cdots&0&\widetilde{T}_{n-1}&\tilde{S}_{n-2,n-1}\\
%0&\cdots&\cdots&0&\widetilde{T}_{n-1}\\
%\end{pmatrix}\begin{pmatrix}
%U_{11}& U_{12}& \cdots& U_{1n-1}& U_{1n}\\
%0& U_{22}& \cdots& U_{2n-1}& U_{2n}\\
%\vdots& \vdots&\ddots& \vdots&\vdots\\
%0& 0& \cdots&U_{n-1n-1}&U_{n-1n}\\
%0&0&\cdots&0&U_{nn}\\
%\end{pmatrix},
%\end{eqnarray*}
Hence for $1\leq i\leq n,$ we have 
$$ U_{ii}T_{i-1i-1}=\tilde{T}_{i-1i-1}U_{ii}.$$ 
Since $U$ is  unitary and upper triangular, it follows that 
%so we have
% $$\begin{pmatrix}
%U^*_{11} & 0 & 0&\cdots&0\\
%U^*_{12}&U^*_{22}&0&\cdots&0 \\
%\vdots&\vdots&\ddots& &\vdots\\
%U^*_{1n-1}&U^*_{2n-1}&\cdots&U_{n-1n-1}&0\\
%U^*_{1n}&U^*_{2n}&\cdots&U^*_{n-1n}&U^*_{nn}\\
%\end{pmatrix}\begin{pmatrix}
%U_{11}& U_{12}& \cdots& U_{1n-1}& U_{1n}\\
%0& U_{22}& \cdots& U_{2n-1}& U_{2n}\\
%\vdots& \vdots&\ddots& \vdots&\vdots\\
%0& 0& \cdots&U_{n-1n-1}&U_{n-1n}\\
%0&0&\cdots&0&U_{nn}\\
%\end{pmatrix}=I,  $$
%which implies that
$$U_{11}^*U_{11}=I,\;\;\mbox{and}\;\;U^*_{1j}U_{11}=0,\;\; 2\leq j\leq n.$$
However, $U_{11}$  intertwines $T_0$ and $\tilde{T}_0$ and
we have just seen that it is an isometry. It must be then unitary  by Proposition \ref{dr}.   Hence $U_{1j}=0,\; 2\leq j\leq n.$ For any natural number $m<n,$  if we have
$U_{ki}=0,\;1\leq  k\leq m\; ;  k < i\leq n,$
then 
$$U_{m+1}^*U_{m+1}=I\;\;\;\mbox{and}\;\;\;U^*_{m+1i}U_{m+1m+1}=0,\; \;m+1< i\leq n.$$
Since $U_{m+1\,m+1}$ intertwines $T_{m}$ and
$\tilde{T}_{m}$ and it is isometric, we conclude, using Proposition \ref{dr}, that $U_{m+1m+1}$ is  unitary. Hence
$$U_{m+1\,i}=0,\;m+1<i\leq n.$$ An induction on $m$ 
proves that $U$ is  diagonal.
\end{proof}
We use the rigidity theorem just proved to extract a set of  unitary invariants for operators in the class $\mathcal FB_n(\Omega).$

\begin{prop}\label{mit}
Suppose $T$ is an operator in $\mathcal FB_n(\Omega)$ and that  
$t_{n-1}$ is a non-vanishing holomorphic section of $E_{T_{n-1}}.$
Then 
\begin{enumerate}
\item[(i)] the curvature $\mathcal K_{T_{n-1}},$ 
\item[(ii)] $\tfrac{\|t_{i-1}\|}{\|t_i\|},$ where $t_{i-1}=S_{i-1,i}(t_i),\, 1\leq i\leq n-1;$ 
\item[(iii)] $\tfrac{\|S_{k,l}(t_l)\|}{\|t_0\|},\;\, 2\leq l\leq n-1, 0\leq k\leq n-3$
\end{enumerate}
are unitary invariants for the operator $T.$ 
%In addition, we also have 
%Let $T$ and $\tilde{T}$ be elements
%$\widetilde{\mathcal{F}}B_n(\Omega)$. If $T$ is unitarily
%equivalent to $\tilde{T}$ then
%$$\mathcal{K}_{T_0}=\mathcal{K}_{\tilde{T}_0}\;\mbox{,}
%\;\frac{\|t_{i-1}\|}{\|\tilde{t}_{i-1}\|}=\frac{\|t_i\|}{\|\tilde{t}_i\|},
%1\leq i\leq n-1,\;\; \mbox{and}$$
%$$
%\;{\|S_{k,l}(t_l)\|}=\frac{\|t_0\|}{\|\tilde{t}_0\|}{\|\tilde{S}_{k,l}(\tilde{t}_l)\|}
%,\;\; 2\leq l\leq n-1, 0\leq k\leq n-3,\;\;\; $$
% where $t_{n-1},\tilde{t}_{n-1}$ are holomorphic frames of $E_{T_{n-1}}$
% and $E_{\tilde{T}_{n-1}}$ respectively and
% $t_{i-1}=S_{i-1,i}(t_i)$, $\tilde{t}_{i-1}=\tilde{S}_{i-1,i}(\tilde{t}_i)$ for $1\leq i\leq n-1$.
\end{prop}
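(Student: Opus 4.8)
The plan is to derive everything from the rigidity theorem (Theorem~\ref{hdu}) together with the elementary fact, used repeatedly above, that an invertible intertwiner between two line bundles attached to operators in $B_1(\Omega)$ is multiplication by a non-vanishing holomorphic function. So suppose $\tilde T\in\mathcal FB_n(\Omega)$ is unitarily equivalent to $T$, say $UT=\tilde TU$ with $U$ unitary. By Theorem~\ref{hdu} the unitary $U$ is diagonal, $U=\mathrm{diag}(U_0,\dots,U_{n-1})$, with $U_iT_i=\tilde T_iU_i$ for $0\le i\le n-1$ and $U_iS_{i,j}=\tilde S_{i,j}U_j$ for $i<j$. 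On the $\tilde T$-side I would choose the sections compatibly: fix a non-vanishing holomorphic section $\tilde t_{n-1}$ of $E_{\tilde T_{n-1}}$ and set $\tilde t_{i-1}:=\tilde S_{i-1,i}(\tilde t_i)$, just as on the $T$-side. By Proposition~\ref{dr} every $S_{i,i+1}$ and $\tilde S_{i,i+1}$ has dense range, so $t_i$ and $\tilde t_i$ are non-vanishing holomorphic frames on a common open subset $\Omega_0\subseteq\Omega$; passing to such an $\Omega_0$ costs nothing since every quantity in the statement is real-analytic and hence determined by its restriction to any open set.

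The first real step is a propagation claim: there is a single non-vanishing holomorphic function $\phi$ on $\Omega_0$ with $U_it_i=\phi\,\tilde t_i$ for all $i$. Since $U_{n-1}$ intertwines $T_{n-1}$ and $\tilde T_{n-1}$, the frame $U_{n-1}t_{n-1}$ of $E_{\tilde T_{n-1}}$ equals $\phi\,\tilde t_{n-1}$ for some non-vanishing holomorphic $\phi$; then, writing $t_i=S_{i,i+1}\cdots S_{n-2,n-1}(t_{n-1})$ and applying $U_kS_{k,k+1}=\tilde S_{k,k+1}U_{k+1}$ step by step, one gets $U_it_i=\tilde S_{i,i+1}\cdots\tilde S_{n-2,n-1}(U_{n-1}t_{n-1})=\tilde S_{i,i+1}\cdots\tilde S_{n-2,n-1}(\phi\,\tilde t_{n-1})=\phi\,\tilde t_i$, the scalar $\phi(w)$ sliding freely through the linear operators $\tilde S_{\cdot,\cdot}$. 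The same computation with $U_kS_{k,l}=\tilde S_{k,l}U_l$ yields $U_kS_{k,l}(t_l)=\phi\,\tilde S_{k,l}(\tilde t_l)$ for $k<l$.

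Now all three assertions drop out by taking norms and using that each $U_i$ is an isometry, so that $|\phi(w)|$ occurs only as an overall factor and cancels in every ratio. From $\|t_{n-1}\|^2=|\phi|^2\|\tilde t_{n-1}\|^2$, applying $\bar{\partial}\partial\log(\cdot)$ and using the harmonicity of $\log|\phi|^2$ gives $\mathcal{K}_{T_{n-1}}=\mathcal{K}_{\tilde T_{n-1}}$, which is (i) (equivalently, invoke Theorem~\ref{eq} for the unitary $U_{n-1}$). From $\|t_i(w)\|=|\phi(w)|\,\|\tilde t_i(w)\|$ we read off $\|t_{i-1}\|/\|t_i\|=\|\tilde t_{i-1}\|/\|\tilde t_i\|$, which is (ii); and from $\|S_{k,l}(t_l)(w)\|=|\phi(w)|\,\|\tilde S_{k,l}(\tilde t_l)(w)\|$ together with $\|t_0(w)\|=|\phi(w)|\,\|\tilde t_0(w)\|$ we get $\|S_{k,l}(t_l)\|/\|t_0\|=\|\tilde S_{k,l}(\tilde t_l)\|/\|\tilde t_0\|$, which is (iii). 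I would also record that these quantities are independent of the initial choice of $t_{n-1}$: replacing $t_{n-1}$ by $\psi t_{n-1}$ with $\psi$ non-vanishing holomorphic multiplies every $t_i$, every $S_{k,l}(t_l)$, and $t_0$ by $\psi$, leaving the curvature (again by harmonicity) and all the displayed ratios untouched, so the listed objects are genuinely attached to the operator $T$.

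The hard part is not conceptual but organizational: keeping track that one and the same function $\phi$ threads through the whole chain $S_{i,i+1}\cdots S_{n-2,n-1}$ and through each remaining entry $S_{k,l}$, $k<l$, with no extra factor appearing, and being clean about the open set on which all the relevant sections are frames. The genuinely substantive input — that the intertwining unitary must be block diagonal — has already been delivered by Theorem~\ref{hdu} (through Proposition~\ref{dp}).
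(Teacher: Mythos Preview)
Your proof is correct and follows essentially the same approach as the paper: invoke Theorem~\ref{hdu} to obtain a diagonal intertwining unitary, propagate a single holomorphic function $\phi$ through the chain via $U_iS_{i,i+1}=\tilde S_{i,i+1}U_{i+1}$ so that $U_it_i=\phi\,\tilde t_i$ for all $i$, and then take norms to extract the invariants. Your version is in fact slightly more thorough than the paper's, since you also verify independence of the choice of $t_{n-1}$ and are explicit about passing to a subdomain where all sections are frames.
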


%\begin{rem}\begin{enumerate}
%%\item For a given frame $t_{n-1}$ of $E_{T_{n-1}}$, we can choose
%%small neighborhood, say, $\Omega_0\subseteq\Omega$ such that
%%$t_i(w)\neq 0, 0\leq i \leq n-1$ and
%%$\tilde{S}_{kl}(\tilde{t}_l)(w)\neq 0, 2\leq l\leq n-1, 0\leq
%%k\leq n-3$ for all $w$ in $\Omega_0$ if operators $S_{kl}$ are non
%%zero.
%
%\item Number of invariant for $T\in \mathcal{F}B_n(\Omega)$ lies
%between $n$ and $\frac{n(n+1)}{2}$. If $r$ operators are non zero
%in the set $\{S_{kl}:
%  2\leq l\leq n-1, 0\leq k\leq n-3\}$ of
% $\frac{n(n-1)}{2}$ operators then number of invariant for $T$ is $n+r$.
%\end{enumerate}
%\end{rem}
\begin{proof}
Suppose $T,$ $\tilde{T}$ are in $\mathcal FB_n(\Omega)$
and that there is an unitary $U$ such that $TU=\tilde{T}U$. Such an intertwining unitary must be diagonal, that is, 
$U=U_0\oplus \cdots \oplus U_{n-1},$ for some choice of $n$ unitary operators $U_0, \ldots , U_{n-1}.$   
% Theorem
%\ref{hdu}, there exist $U_i:\mathcal{H}_i\to
%\widetilde{\mathcal{H}}_i$ such that $$U=\begin{pmatrix}
%U_{0} & 0 & 0&\cdots& 0\\
%0 &U_{1}& 0&\cdots& 0\\
%\vdots&\vdots &\ddots& &\vdots\\
%0& 0 &\cdots&U_{n-2}& 0\\
%0& 0&\cdots&0&U_{n-1}\\
%\end{pmatrix}.$$
Since $U_iT_i=\tilde{T}_iU_i, 0\leq i\leq n-1,$ and
$U_iS_{i,i+1}=\tilde{S}_{i,i+1}U_{i+1}, 0\leq i\leq n-2,$ we have
\begin{eqnarray}\label{inv1} U_i(t_i(w))=\phi(w)\tilde{t}_i(w),\; 0\leq i\leq
n-1,\end{eqnarray} where $\phi$ is some non zero holomorphic
function. Thus 
$$\mathcal{K}_{T_{n-1}}=\mathcal{K}_{\tilde{T}_{n-1}}\;\;\;\mbox{and}\;\;\; \frac{\|t_{i-1}\|}{\|\tilde{t}_{i-1}\|}=\frac{\|t_i\|}{\|\tilde{t}_i\|},\;1\leq i\leq n-1.$$
For  $2\leq l\leq n-1, 0\leq k\leq n-3$ and $w\in \Omega$, we have
\begin{eqnarray*}
\|S_{k,l}(t_l(w))\|&=& \|U_kS_{k,l}(t_l(w))\|\\
&=& \|\tilde{S}_{k,l}U_l(t_l(w))\|\\
&=& |\phi(w)| \|\tilde{S}_{k,l}(t_l(w))\|\\
&=&\frac{\|t_0(w)\|}{\|\tilde{t}_0(w)\|}{\|\tilde{S}_{k,l}(\tilde{t}_l(w))\|}
\end{eqnarray*}
This completes the proof.
\end{proof}
\begin{rem}
The invariants listed in the preceding theorem are not necessarily complete.  Pick two operators $T$ and $\tilde{T}$ in $\mathcal FB_n(\Omega)$ for which the invariants of Theorem \ref{mit} agree.  Then there exists unitary operators $U_i,$ on the Hilbert space $\mathcal H_i,$ $0\leq i \leq n-1,$ such that
\begin{enumerate}
\item $U_iT_i=\tilde{T}_iU_i,0\leq i\leq n-1$ and
$U_iS_{i,i+1}=\tilde{S}_{i,i+1}U_{i+1},0\leq i\leq n-2;$ 
\item 
$
\|U_kS_{k,l}(x_l)\|={\|\tilde{S}_{k,l}U_l(x_l)\|},\;x_\ell \in \mathcal H_\ell,\;2\leq l\leq
n-1, 0\leq k\leq n-3.$ 
\end{enumerate}
There is no obvious reason why this should be enough for the operators $T$ and $\tilde{T}$ to be unitarily equivalent.
\end{rem}

\begin{prop}
If an operator $T$ is in $\mathcal{F}B_n(\Omega),$ then it is irreducible.
\end{prop}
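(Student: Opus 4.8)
The plan is to mimic the proof of Proposition \ref{pirre}, now using Proposition \ref{utp} to reduce from an arbitrary projection in the commutant to a diagonal one.  First I would let $P=\big(\!\!\big(P_{i,j}\big)\!\!\big)_{i,j=1}^n$ be a projection commuting with $T\in\mathcal FB_n(\Omega)$.  By Proposition \ref{utp}, any operator in the commutant of $T$ is upper triangular, so $P$ is upper triangular; since $P$ is also self-adjoint, $P=P^*$ is simultaneously upper and lower triangular, hence diagonal, say $P=P_{1,1}\oplus\cdots\oplus P_{n,n}$.

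Next I would exploit that $P$ being diagonal and commuting with $T$ forces each diagonal block to be a projection commuting with the corresponding $T_{i}$: comparing the $(i,i)$ entries of $PT=TP$ gives $P_{i,i}T_{i-1}=T_{i-1}P_{i,i}$, so $P_{i,i}$ lies in the commutant of $T_{i-1}\in B_1(\Omega)$.  An operator in $B_1(\Omega)$ is irreducible, so each $P_{i,i}$ is either $0$ or $I$.  It remains to rule out the possibility that some diagonal blocks are $0$ and others are $I$; this is where the off-diagonal intertwining hypothesis enters.  Comparing the $(i,i+1)$ entries of $PT=TP$ yields $P_{i,i}S_{i,i+1}=S_{i,i+1}P_{i+1,i+1}$ (the other terms vanish because $P$ is diagonal), so if $P_{i,i}=I$ and $P_{i+1,i+1}=0$ we would get $S_{i,i+1}=0$, and if $P_{i,i}=0$ and $P_{i+1,i+1}=I$ we would get $S_{i,i+1}=0$ as well, each contradicting the standing hypothesis that $S_{i,i+1}\neq 0$.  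Therefore all the $P_{i,i}$ are equal, and since they are each $0$ or $I$, either $P=0$ or $P=I$.  Hence $T$ has no nontrivial reducing projection and is irreducible.

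The main obstacle, such as it is, is purely bookkeeping: one must carefully extract exactly which entries of the matrix equation $PT=TP$ are needed.  The diagonal entries give irreducibility of each piece via the $B_1$ structure, and the first super-diagonal entries propagate the common value of the $P_{i,i}$ along the chain of intertwiners $S_{0,1},S_{1,2},\dots,S_{n-2,n-1}$; no higher super-diagonal entries are needed.  All the real work has already been done in Propositions \ref{utp} and \ref{dr} (the latter guaranteeing $S_{i,i+1}\neq 0$ has dense range, though for this argument the mere non-vanishing suffices), so the proof is short.
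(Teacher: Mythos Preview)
Your argument is correct and follows essentially the same route as the paper's proof: use Proposition \ref{utp} to force $P$ upper triangular, self-adjointness to make it diagonal, irreducibility of each $T_i\in B_1(\Omega)$ to reduce each $P_{i,i}$ to $0$ or $I$, and then the super-diagonal relation $P_{i,i}S_{i,i+1}=S_{i,i+1}P_{i+1,i+1}$ together with $S_{i,i+1}\neq 0$ to propagate a common value along the chain. If anything, your write-up is slightly more explicit than the paper's about why each diagonal block is $0$ or $I$.
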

\begin{proof}

Let $P$ be a projection in the commutant $\{T\}^{\prime}$ of the operator $T.$ The operator $P$ must therefore be upper triangular by  Proposition \ref{utp}. It is also a Hermitian idempotent and therefore must be diagonal with projections  $P_{ii}, 0\leq i\leq n-1,$ on the diagonal.
%$$P=\begin{pmatrix}
% P_{00} & 0 &0& \cdots&0 \\
%0&P_{11}&0&\cdots&0 \\
%0&0&P_{22}&\cdots&0\\
%\vdots &\vdots &\vdots &\ddots&\vdots \\
%0&0&0&\cdots&P_{n-1n-1}\\
%\end{pmatrix}$$
%where $P_{ii}, 0\leq i\leq n-1$, are  projections.
We are assuming that $PT=TP,$  which gives 
$$P_{ii}S_{i,i+1}=S_{i,i+1}P_{i+1i+1}, \;0\leq i\leq n-2.$$
None of the operators $S_{i,i+1},\;0\leq i\leq n-2,$ are zero by hypothesis. It follows that $P_{ii}=0,$ if and only if $P_{i+1\,i+1}=0.$
Thus, for any projections $P_{ii}\in \{T_i\}^{\prime}$, we
have  only two possibilities:
$$P_{00}=P_{11}=P_{22}=\cdots=P_{n-1n-1}=I,~\mbox{or}~P_{00}=P_{11}=P_{22}=\cdots=P_{n-1n-1}=0.$$ 
Hence $T$ is irreducible.
\end{proof}
\subsection{\large \sf Frames}As in Remark \ref{frame}, we attempt to relate the frame of the holomorphic vector bundle $E_T,$  $T\in \mathcal FB_n(\Omega)$ to that of the direct sum of the line bundles $E_{T_0\oplus \cdots \oplus T_{n-1}}.$  Let $\boldsymbol t=\{{t_0},  {t_1},\ldots, {t_{n-1}}\}$ be a set of non-vanishing holomorphic sections for the line bundles $E_{T_0},\ldots ,E_{T_{n-1}},$ respectively. Suppose that  a suitable linear combination 
of these  non-vanishing sections $t_i,\; i=0,\ldots, n-1,$ and their derivatives produces a holomorphic frame $\boldsymbol \gamma:=\{\gamma_0,\ldots , \gamma_{n-1}\}$ for the vector bundle $E_T,$ that is, 
$$\gamma_{i}=t_{0}^{(i)}+\mu_{1,i}t^{(i-1)}_1+\cdots+\mu_{i-1,i}t^{(1)}_{i-1}
+t_i$$ for some choice of non-zero constants $\mu_{1,i},\ldots, \mu_{i-1,i},\;0\leq i \leq k-1.$ 
The existence of such an orthogonal frame is not guaranteed except when $n=2.$. Assuming that it exists, the relationship between these vector bundles can be very mysterious as shown below.  This justifies, to some extent, the choice of the smaller class of operators in the next section. 
If $\tilde{\boldsymbol t}$ is another set of non-vanishing sections for the line bundles $E_{T_1},\ldots ,E_{T_{n-1}},$ then the linear combination of these  with exactly the same constants $\mu_{ij}$ is a second  holomorphic frame, say $\tilde{\boldsymbol\gamma}$ of the vector bundle $E_T.$ Let $\Phi_k$ be a change of frame between the two sets of non-vanishing orthogonal frames $\boldsymbol t$  and $\tilde{\boldsymbol t},$ and $\Psi_k$ be a change of frame between $\boldsymbol \gamma$ and $\tilde{\boldsymbol\gamma}.$ We now describe the relationship between $\Phi_k$ and $\Psi_k$  explicitly:
\begin{enumerate}
\item $\Phi_k(i,j):=\phi_{i,j}=\psi_{i,j}:=\Psi_k(i,j)=0,$  $i>j,$ that is, $\Phi_{k}$ and
$\Psi_{k}$ are upper-triangular. 
\item For $0\leq i \leq k-1,$ we have $\phi_{i,i}=\psi_{i,i}=\phi_{0,0}$, and for $i<k-1,$ we have 
$$\psi_{i,k-1}=C^i_{k-1}\phi_{0,0}^{(k-1-i)}+\cdots+C^i_{k-1-j}\mu_{j,k-1}\phi_{0,j}^{(k-1-j-i)}+\cdots+\mu_{k-1-i,k-1}\phi_{0,k-1-i},$$
where $C^n_r$ stands for the binomial coefficient ${n\choose r}.$
\item In particular, for $1\leq i \leq k-1,$ if we choose  $\phi_{0,i},$ then 
$\psi_{i,k-1}=C^i_{k-1}\phi_{0,0}^{(k-1-i)}$. In this case, we have  
\begin{enumerate}
\item $$\Psi_{k}=\begin{pmatrix}
\psi & \psi^{(1)} & \psi^{(2)}&\cdots&\psi^{(k-2)}&\psi^{(k-1)}\\
&\psi &2\psi^{(1)}&\cdots&C^{1}_{k-2}\psi^{(k-3)}&C^{1}_{k-1}\psi^{(k-2)} \\
&&\psi &\ddots&\vdots&\vdots \\
&&&\ddots&\ddots&\vdots\\
&& &&\psi&C^{(k-2)}_{k-1}\psi^{(1)}\\
&&&&&\psi\\
\end{pmatrix};$$
\item and there are $\frac{(k-2)(k-1)}{2}$ equations in $\frac{(k-1)k}{2}$ variables, namely, 
$\mu_{i\,j},\; 1\leq i <j,\, j \leq k-1.$ 
Thus these coefficients are determined as soon we make an arbitrary choice of the coefficients  $\mu_{1,k-1},\ldots, \mu_{k-2,k-1}.$ 
%$\frac{(k-3)(k-2)}{2}$ equations involving $\frac{(k-2)(k-1)}{2}$
%coefficients:
%$$\{\mu_{1,2},\mu_{1,3},\mu_{2,3},\cdots,\mu_{1,k-1},\cdots,\mu_{k-2,k-1}\}.$$
%So when the coefficients $\{\mu_{1,k-1},\cdots,\mu_{k-2,k-1}\}$
%chosen, then the other coefficients
%$$\{\mu_{1,2},\mu_{1,3},\mu_{2,3},\cdots,\mu_{k-3,k-2}\}$$ will be
%completely determined.
\end{enumerate}
\end{enumerate}
We prove the statements (1) and (2) by induction on $k.$
These statements are valid for $k=2$ as was noted in Remark \ref{frame}. To prove their validity for an arbitrary $k\in \mathbb N,$  assume them to be valid for $k-1.$ Let $\Phi^{i}_k$ and $\Psi^{i}_k$ denote the $i$th row of $\Phi$ and $\Psi,$ respectively.  Suppose that $(\widetilde{t}_0,\widetilde{t}_1,\cdots,
\widetilde{t}_{k})=(t_0,t_1,\cdots, t_{k})\Phi_{k}$ and
$(\widetilde{\gamma}_0,\widetilde{\gamma}_1,\cdots,\widetilde{\gamma}_{k})=(\gamma_0,\gamma_1,\cdots,\gamma_{k})\Psi_{k}.$ Then we have
$$\widetilde{t}_j=(t_0,t_1,\cdots, t_{k-1})\Phi^j_{k-1}+t_k\psi_{k,j}, j<k. $$
For any $i<k$, we have 
$$\begin{array}{lll}
\widetilde{\gamma}_i&=&(\gamma_0,\gamma_1,\cdots,
\gamma_{k-1})\Psi^i_{k-1}+\gamma_k\psi_{k,i}\\
&=&(\gamma_0,\gamma_1,\cdots,
\gamma_{k-1})\Psi^i_{k-1}+(t_{0}^{(k)}+\mu_{1,k}t^{(k-1)}_1+\cdots+\mu_{i,k}t^{(k-i)}_{i}+\cdots
+t_k)\psi_{k,i}\\
\end{array}$$ and
$$\widetilde{\gamma}_i=\widetilde{t}_{0}^{(i)}+\mu_{1,i}\widetilde{t}^{(i-1)}_1+\cdots+\mu_{i-1,i}\widetilde{t}^{(1)}_{i-1}
+\widetilde{t}_i,\;i<k.$$ 
From these equations, it follows that
\begin{eqnarray*}
\lefteqn{(\gamma_0,\gamma_1,\cdots,
\gamma_{k-1})\Psi^i_{k-1}+(t_{0}^{(k)}+\mu_{1,k}t^{(k-1)}_1+\cdots+\mu_{i,k}t^{(k-i)}_{i}+\cdots
+t_k)\psi_{k,i}}\\
&=&\widetilde{t}_{0}^{(i)}+\mu_{1,i}\widetilde{t}^{(i-1)}_1+\cdots+\mu_{i-1,i}\widetilde{t}^{(1)}_{i-1}
+\widetilde{t}_i. \phantom{GADADHARMISRAGADADHAR}\end{eqnarray*}
We Note that $\mu_{i,k}\psi_{k,i}t^{(k-i)}_i$ appears only once in this equation to conclude  $\psi_{k,i}=0,$ $i<k.$  Comparing the coefficients of $t_i$ on both sides of the
equation, we  also conclude that $\psi_{k,i}=\phi_{k,i}, i<k$ completing the induction step for the first statement of our claim.

Our assumption that  $(\widetilde{t}_0,\widetilde{t}_1,\cdots,
\widetilde{t}_{k})=(t_0,t_1,\cdots, t_{k})\Phi_{k}$ and
$(\widetilde{\gamma}_0,\widetilde{\gamma}_1,\cdots,\widetilde{\gamma}_{k})=(\gamma_0,\gamma_1,\cdots,\gamma_{k})\Psi_{k}$ gives 
$$\sum\limits_{i=0}^k(t^{i}_0+\mu_{1,i}t^{(i-1)}_1+\cdots+\mu_{i-1,i}t^{(1)}_{i-1}+t_i)\psi_{i,k}=\sum\limits_{i=0}^{k}\mu_{i,k}(t_0\phi_{0,i}+\cdots+t_i\phi_{0,0})^{(k-i)}, i<k. $$
A comparison of the coefficients of $t^{(i)}_0$ leads to
$$\psi_{i,k}=C^i_{k}\phi_{0,0}^{(k-i)}+\cdots+C^{i}_{k-j}\mu_{j,k}\phi_{0,j}^{(k-j-i)}+\cdots+\mu_{k-i,k}\phi_{0,k-i}, i<k$$
completing the proof of the second statement. For the third statement, from the equations
\begin{eqnarray*}\lefteqn{
\sum\limits_{i=0}^{k-1}(t^{i}_0+\mu_{1,i}t^{(i-1)}_1+\cdots+\mu_{i-1,i}t^{(1)}_{i-1}+t_i)\psi_{i,k-1}}\\&&=\sum\limits_{i=0}^{k-1}\mu_{i,k-1}(t_0\phi_{0,i}+\cdots+t_i\phi_{0,0})^{(k-1-i)}, i<k-1,
\end{eqnarray*}
setting  $\phi_{0,i}=0,$  and comparing the coefficients of
$t_i$, $i>0,$ we have that $\phi_{i,k-1}=c_{i,k-1}\phi^{(k-1-i)}_{0,0}$ for some $c_{i,k-1}\in \mathbb{C}.$ Putting this back in  the equation given above, we obtain $\frac{(k-2)(k-1)}{2}$ equations involving $\frac{(k-1)k}{2}$ coefficients. This 
completes the proof of the third statement. 
\subsection{\large \sf An even smaller class}The relationship between the non-vanishing holomorphic sections of the vector bundles $E_0, \ldots , E_{n-1}$ and the holomorphic frame of the vector bundle $E$ of rank $n$ is rather complex, in general, as we have just  seen. The theorem below shows that it is simple provided we impose additional restrictions. 
\begin{prop}
For an operator $T$ in the Cowen-Douglas class ${B}_n(\Omega),$ acting on a complex separable Hilbert space $\mathcal H,$ the following conditions are equivalent.
\begin{enumerate}
\item There exists an orthogonal decomposition
$\mathcal{H}_0\oplus\mathcal{H}_1\oplus \cdots \oplus
\mathcal{H}_{n-1}$ of the Hilbert space $\mathcal{H}$ operators
$T_k:\mathcal{H}_k\to\mathcal{H}_k$ in $B_1(\Omega),$ $k=0,1,\cdots, n-1;$ 
$S_{k-1,k}:\mathcal{H}_{k}\to\mathcal{H}_{k-1},$  
$k=1,2,\cdots,n-1,$ such that 
$$T=\left(\begin{array}{ccccccccccc}T_{0}&S_{0,1}&0&\cdots&0\\
0&T_{1}&S_{1,2}&\ddots&0\\
\vdots&\vdots&\ddots&\ddots&\vdots\\
0&0&0&T_{n-2}&S_{n-2,n-1}\\
0&0&0&0&T_{n-1} \end{array}\right)$$
and $T_{k-1}S_{k-1,k}=S_{k-1,k}T_{k},\;0\leq k\leq n-1.$
%, that is, $T\in \widetilde{\mathcal{F}}B_n(\Omega)$.
\item There exists a holomorphic  frame
$\{\gamma_0,\gamma_1,\cdots,\gamma_{n-1}\}$ of the vector bundle  $E_{T}$ such that $t_k(w)$ is orthogonal to $t_j(w),$ $w\in \Omega,$ whenever $k\neq j,$  and 
$$t_k(w):=\sum_{j=0}^{k}\tfrac{1}{j!}\tfrac{\partial^j}{\partial
w^j}\gamma_{k-j}(w),\; 0\leq k\leq
n-1.$$
\end{enumerate}
\end{prop}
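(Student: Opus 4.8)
The plan is to prove the two implications separately. The direction $(1)\Rightarrow(2)$ is an explicit construction, whereas $(2)\Rightarrow(1)$ is a block‑decomposition argument that reduces, block by block, to the reasoning already carried out for $n=2$ in Proposition \ref{f}.

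\textbf{$(1)\Rightarrow(2)$.} I would start from non‑vanishing holomorphic sections $\theta_k$ of the line bundles $E_{T_k}$, $0\le k\le n-1$. Since $T_{k-1}S_{k-1,k}=S_{k-1,k}T_k$, each $S_{k-1,k}\theta_k$ is a holomorphic section of $E_{T_{k-1}}$, so $S_{k-1,k}\theta_k=\psi_k\,\theta_{k-1}$ for a holomorphic $\psi_k$; shrinking $\Omega$ I may assume every $\psi_k$ is non‑vanishing. Put $c_{n-1}=1$, $c_m=\psi_{m+1}\cdots\psi_{n-1}$, and $t_m=c_m\theta_m$, so that $S_{m-1,m}t_m=t_{m-1}$, and then set $\gamma_k=\sum_{j=0}^k\frac{(-1)^j}{j!}\,\partial_w^j t_{k-j}$. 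From $t_m(w)\in\mathcal H_m$, $S_{m-1,m}t_m=t_{m-1}$ and $T_mt_m(w)=wt_m(w)$ one gets $(T-w)t_m(w)=t_{m-1}(w)$ (with $t_{-1}:=0$), and combining this with $\partial_w^j[(T-w)u]=(T-w)\partial_w^j u-j\,\partial_w^{j-1}u$ a short telescoping computation yields $(T-w)\gamma_k(w)=\gamma_{k-1}(w)-\gamma_{k-1}(w)=0$, so $\gamma_k(w)\in\ker(T-w)$. Since $\gamma_k$ has $\mathcal H_k$‑component exactly $t_k$ and no component in $\mathcal H_{k+1},\dots,\mathcal H_{n-1}$, the $\gamma_k(w)$ are pointwise linearly independent; being $n$ in number they form a holomorphic frame of $E_T$. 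Inverting the (unitriangular, derivative‑weighted) change of variables, which is the identity $\sum_{a+b=m}\frac{(-1)^a}{a!\,b!}=0$ for $m\ge1$, recovers $t_k=\sum_{j=0}^k\frac1{j!}\partial_w^j\gamma_{k-j}$, i.e.\ the formula in $(2)$, and orthogonality of the $t_k$ is automatic because $t_k\in\mathcal H_k$ and the $\mathcal H_k$ are mutually orthogonal.

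\textbf{$(2)\Rightarrow(1)$.} Given a holomorphic frame $\{\gamma_0,\dots,\gamma_{n-1}\}$ of $E_T$ for which the $t_k$ are pairwise orthogonal, I would first upgrade the fibrewise orthogonality $\langle t_k(w),t_j(w)\rangle=0$ to orthogonality of the closed subspaces $\mathcal H_k:=\overline{\operatorname{span}\{t_k(w):w\in\Omega\}}$, by the standard argument that $\langle t_k(u),t_j(\bar v)\rangle$ is holomorphic in $(u,v)$ and vanishes on the totally real submanifold $\{v=\bar u\}$, hence vanishes identically. Differentiating $(T-w)\gamma_m(w)=0$ and summing gives $(T-w)t_k(w)=t_{k-1}(w)$, i.e.\ $Tt_k(w)=w\,t_k(w)+t_{k-1}(w)$; consequently $T\mathcal H_k\subseteq\mathcal H_{k-1}\oplus\mathcal H_k$, so with respect to $\mathcal H=\bigoplus_k\mathcal H_k$ (each $\gamma_i=\sum_j\frac{(-1)^j}{j!}\partial_w^j t_{i-j}$ lies in $\bigoplus_{m\le i}\mathcal H_m$ and the $\gamma_i(w)$ span the fibres of $E_T$, so $\bigvee_w\ker(T-w)=\mathcal H$ forces $\bigoplus_k\mathcal H_k=\mathcal H$) the operator $T$ is bidiagonal with diagonal blocks $T_k$ and superdiagonal blocks $S_{k-1,k}$; reading off the $\mathcal H_k$‑ and $\mathcal H_{k-1}$‑components of $Tt_k(w)=w\,t_k(w)+t_{k-1}(w)$ gives $T_kt_k(w)=wt_k(w)$ and $S_{k-1,k}t_k(w)=t_{k-1}(w)$, whence $S_{k-1,k}\ne0$ and $T_{k-1}S_{k-1,k}=S_{k-1,k}T_k$. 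It remains to check $T_k\in B_1(\Omega)$, and here I would argue exactly as in Proposition \ref{f}, one block at a time from $k=n-1$ downward: $T_k-w$ is onto (peel the top components off a preimage, using $\ker(T_m-w)=\mathbb C\,t_m(w)$ for the already‑treated $m>k$ and the identities $\partial_w^{m}t_k=(T_k-w)\frac{\partial_w^{m+1}t_k}{m+1}$); an index count (triangular additivity, as invoked in Proposition \ref{prop2}) bounds $\dim\ker(T_k-w)$; and if $\dim\ker(T_k-w)\ge2$, two independent holomorphic vectors $s_1,s_2\in\ker(T_k-w)$, pushed down the chain $u_i^{(k)}:=s_i$, $u_i^{(m-1)}:=S_{m-1,m}u_i^{(m)}$ and recombined by the same derivative‑weighted formula $\delta_i:=\sum_j\frac{(-1)^j}{j!}\partial_w^j u_i^{(k-j)}$ as in $(1)\Rightarrow(2)$, produce two vectors of $\ker(T-w)$ which, together with the appropriate $\gamma_j$'s, give $n+1$ linearly independent vectors in the $n$‑dimensional space $\ker(T-w)$ — a contradiction. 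Hence $\dim\ker(T_k-w)=1$ for all $k$, and $T$ has the stated bidiagonal form.

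\textbf{Main obstacle.} The substantive difficulty is the verification of the Cowen–Douglas property for the diagonal blocks in $(2)\Rightarrow(1)$: as in Proposition \ref{f}, establishing surjectivity of $T_k-w$ and excluding higher‑dimensional kernels needs the careful componentwise bookkeeping across the length‑$n$ flag, together with the usual care in choosing the kernel vectors holomorphically. Everything else — the construction in $(1)\Rightarrow(2)$ and the bidiagonalization in $(2)\Rightarrow(1)$ — is essentially formal once the two telescoping identities $(T-w)\gamma_k(w)=0$ and $(T-w)t_k(w)=t_{k-1}(w)$ are in hand.
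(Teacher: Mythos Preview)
Your proposal is correct and follows essentially the same route as the paper: in $(1)\Rightarrow(2)$ you build the $t_k$ with $S_{k-1,k}t_k=t_{k-1}$ and invert the triangular derivative relation to get the $\gamma_k$, which is exactly the paper's recursive definition $\gamma_k=t_k-\sum_{j\ge1}\frac{1}{j!}\partial_w^j\gamma_{k-j}$ together with the same telescoping verification of $(T-w)\gamma_k=0$; for $(2)\Rightarrow(1)$ the paper simply says to imitate the $n=2$ argument of Proposition~\ref{f}, and your outline is precisely that imitation (your explicit polarization argument upgrading pointwise orthogonality of the $t_k(w)$ to orthogonality of the subspaces $\mathcal H_k$ is a point the paper leaves implicit).
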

\begin{proof}
We prove $(1)$ implies $(2).$  Let $t_{n-1}$ be a holomorphic frame of the line bundle $E_{T_{n-1}}$. Set $t_{i}=S_{i,i+1}(t_{i+1}),$ $0\leq i\leq
n-2.$ By shrinking $\Omega$ to a smaller open set, we may assume that $t_i$ is a non-vanishing holomorphic section of the line bundle $E_{T_i}$, $0\leq i\leq n-1.$ Define
$\gamma_k$  recursively from the equations 
$$ \gamma_0(w):=t_0(w)\;\;\;\mbox{and}\;\;\gamma_k(w):=t_k(w)-\sum_{j=1}^k\tfrac{1}{j!}\tfrac{\partial^j}{\partial
w^j}\gamma_{k-j}(w),\;1\leq k\leq n-1.$$ For $w$ in $\Omega,$ it is easy to see that the set of vectors  $\{\gamma_k(w):0\leq k\leq n-1\}$ is linearly independent and we have
\begin{eqnarray*}
(T-w)\gamma_i(w)&=&(T-w)\big(t_i(w)-\sum_{j=1}^i\tfrac{1}{j!}\tfrac{\partial^j}{\partial
w^j}\gamma_{i-j}(w)\big)\\
&=&t_{i-1}(w)-\sum_{j=1}^i\tfrac{1}{(j-1)!}\tfrac{\partial^{j-1}}{\partial
w^{j-1}}\gamma_{i-j}(w)\\
&=&t_{i-1}(w)-\sum_{l=0}^{i-1}\tfrac{1}{l!}\tfrac{\partial^{l}}{\partial
w^{l}}\gamma_{i-1-l}(w)\\
&=& t_{i-1}(w)-t_{i-1}(w)\\
&=&0.
\end{eqnarray*}
Hence $\{\gamma_k: 0\leq k\leq n-1\}$ is a holomorphic frame of
$E_T$.

To show that $(2)$ implies $(1),$ we follow the proof given for proving the implication ``(iii) implies (i)'' in Proposition \ref{f}.
\end{proof}
The equivalent conditions of the preceding theorem naturally lead to the definition of a somewhat smaller class of operators given below. 
\begin{defn}
Given a set of operators $T_0, \ldots, T_{n-1}$ in the Cowen Douglas class $B_1(\Omega)$ and a set of non-zero operators $S_{i\,i+1}$ obeying the commutation relation  $T_iS_{i,i+1}=S_{i,i+1}T_{i+1},$ $0\leq i \leq n-2,$ we let $\widetilde{\mathcal{F}}B_n(\Omega)$ denote the set of 
operators $T$  of the form $$ T=\begin{pmatrix}
T_{0} & S_{0,1} & 0 &\cdots &0&0\\
0 &T_{1}&S_{1,2}&\cdots &0&0 \\
\vdots&\ddots &\ddots&\ddots& \vdots&\vdots\\
0&\cdots & 0 & T_{n-3} & S_{n-3,n-2} & 0\\
0&\cdots&0& 0&T_{n-2}& S_{n-2,n-1}\\
0&\cdots&\cdots&0& 0 &T_{n-1}\\
\end{pmatrix}.$$ 
\end{defn}
Any holomorphic change of frame of the vector bundle $E_T$ for $T$ in $\tilde{\mathcal F}B_n(\Omega)$ must be of the form $\big (\!\!\big ({j \choose i} \phi^{(j-i)} \big )\!\!\big),$ where the binomial coefficients ${j\choose i}$ are set to $0$ if $i > j,$ and corresponds to a change of frame for the vector bundle $E_{T_0\oplus \cdots \oplus T_{n-1}}$ of the form $\phi\oplus \cdots \oplus \phi$ just as in  Remark \ref{frame} for the case of rank $2.$  

The following proposition shows that the operators in this smaller class are not only irreducible but are often strongly irreducible.
We have given two separate sufficient conditions.  The second of these conditions was also necessary for strong irreducibility in rank $2,$ where the two classes $\mathcal FB_2(\Omega)$ and $\tilde{\mathcal F}B_2(\Omega)$ coincide. But we haven't been able to determine if this condition is also necessary in general. 
\begin{prop} Suppose $T$ is an operator in $\widetilde{\mathcal{F}}B_n(\Omega).$ Then the operator $T$ is strongly irreducible if the operators $S_{i,i+1}$
\begin{enumerate}
\item  are  invertible, or
\item  are not in $\mbox{ran}\,\sigma_{T_{i},T_{i+1}},$ 
\end{enumerate}
$0\leq i \leq n-2.$
\end{prop}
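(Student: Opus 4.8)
The plan is to prove that condition (2) is sufficient by adapting the argument of Proposition~\ref{psirre1}, and then to deduce condition (1) from it by the similarity reduction already used in Proposition~\ref{psirre}. Since $\widetilde{\mathcal{F}}B_n(\Omega)\subseteq\mathcal{F}B_n(\Omega)$, Proposition~\ref{utp} is available throughout.

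To handle (2), I would take an idempotent $P$ in the commutant $\{T\}^{\prime}$, with block entries $P_{ij}$; by Proposition~\ref{utp} it is upper triangular. Comparing the $(i,i)$ blocks in $PT=TP$ gives $P_{ii}T_i=T_iP_{ii}$, and since an operator in $B_1(\Omega)$ admits no nontrivial idempotent in its commutant (the fact used in the proof of Proposition~\ref{psirre1}), each $P_{ii}$ equals $0$ or $I$. Comparing the $(i,i+1)$ blocks gives
\[
 P_{ii}S_{i,i+1}-S_{i,i+1}P_{i+1,i+1}=T_iP_{i,i+1}-P_{i,i+1}T_{i+1}\in\mbox{ran}\,\sigma_{T_i,T_{i+1}}.
\]
If $P_{ii}\neq P_{i+1,i+1}$ for some $i$, the left-hand side equals $\pm S_{i,i+1}$, forcing $S_{i,i+1}\in\mbox{ran}\,\sigma_{T_i,T_{i+1}}$, contrary to hypothesis~(2). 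Hence $P_{00}=P_{11}=\cdots=P_{n-1,n-1}$, so one of $P$, $I-P$ is upper triangular with every diagonal block zero, i.e.\ strictly upper triangular, hence nilpotent of order at most $n$; being also idempotent, it must vanish. Thus $P=0$ or $P=I$, and $T$ is strongly irreducible.

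For (1), assuming each $S_{i,i+1}$ invertible, I would conjugate $T$ by the invertible block-diagonal operator $X:=\mbox{diag}\big(I,\ S_{0,1},\ S_{0,1}S_{1,2},\ \ldots,\ S_{0,1}S_{1,2}\cdots S_{n-2,n-1}\big)$. Conjugation by a block-diagonal operator preserves the band pattern, and the intertwining relations $T_kS_{k,k+1}=S_{k,k+1}T_{k+1}$ collapse every superdiagonal entry of $XTX^{-1}$ to $I$ and every diagonal block to $T_0$. So $XTX^{-1}$ lies in $\widetilde{\mathcal{F}}B_n(\Omega)$ with $T_0$ on the diagonal and $I$ on the superdiagonal, and its superdiagonal entries $I$ are not in $\mbox{ran}\,\sigma_{T_0,T_0}$, since the identity of $\mathcal{L}(\mathcal{H}_0)$ is never of the form $T_0Y-YT_0$ (Wintner--Wielandt). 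Part~(2) then shows $XTX^{-1}$ is strongly irreducible, and since conjugation by $X$ carries idempotents in the commutant to idempotents in the commutant while fixing $0$ and $I$, strong irreducibility is a similarity invariant and so passes back to $T$.

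The block bookkeeping in $PT=TP$ and the computation of $XTX^{-1}$ are routine; the one delicate step is the passage from ``all diagonal blocks of $P$ agree'' to ``$P$ is trivial,'' and that is precisely where the hypothesis on the $S_{i,i+1}$ is genuinely used — to rule out a jump from $0$ to $I$ along the diagonal — after which nilpotency of a strictly upper-triangular $n\times n$ block matrix finishes the argument. I anticipate no real obstacle, the whole thing being an $n$-block elaboration of Propositions~\ref{psirre} and~\ref{psirre1}; I would, however, not attempt to address whether condition~(2) is also necessary for $n>2$, which appears to be genuinely open.
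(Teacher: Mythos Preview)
Your argument is correct. For part~(2) you follow essentially the paper's route: Proposition~\ref{utp} forces $P$ upper triangular, the diagonal blocks are idempotents in $\{T_i\}'$ hence $0$ or $I$, and the $(i,i+1)$ block equation rules out any jump along the diagonal. Your nilpotent-plus-idempotent observation for the off-diagonal part is in fact cleaner than the paper, which leaves that passage implicit, and your $\pm S_{i,i+1}$ phrasing handles both jump directions at once whereas the paper writes out only one.

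For part~(1) you genuinely diverge from the paper. Both you and the paper conjugate $T$ to the Jordan-type block $J_n[T_0]$ (you do it with a single block-diagonal $X$, the paper with a product of such), but from there the paper runs a separate induction on $n$ to show $J_n[T_0]$ is strongly irreducible, analysing idempotents in its commutant directly. Your reduction of~(1) to~(2) via Wintner--Wielandt (the identity is never a commutator, so $I\notin\mbox{ran}\,\sigma_{T_0,T_0}$) is shorter and avoids that induction entirely. The paper's approach has the minor virtue of making the strong irreducibility of $J_n[T_0]$ a standalone statement; yours has the virtue of showing that~(1) is not really an independent hypothesis but a special case of~(2).
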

\begin{proof}
Suppose that the operators $S_{i,i+1},
0\leq i \leq n-2,$ are  invertible. For $1\leq k \leq n-1,$ let $X_k$ be the block diagonal operator with $I, \ldots I, S_{k,k+1},  \ldots ,S_{n-2,n-1}$ on its diagonal in that order. A straightforward computation shows that 
%
%$$X_1=\begin{pmatrix}I&&&&\\
%                     &S_{1,2}&&&\\
%                      &&S_{2,3}&&\\
%                      &&&\ddots&\\
%                      &&&&S_{n-2,n-1}
%\end{pmatrix}
%$$
%Then
%\begin{eqnarray*}
%&&X_1TX_1^{-1}\\&=&\!\!\!\!\!\begin{pmatrix}I&&&&\\
%                     &S_{1,2}&&&\\
%                      &&S_{2,3}&&\\
%                      &&&\ddots&\\
%                      &&&&S_{n-2,n-1}
%\end{pmatrix}\begin{pmatrix}T_{0}&S_{0,1}&0&\cdots&0\\
%0&T_{1}&S_{1,2}&\ddots&0\\
%\vdots&\vdots&\ddots&\ddots&\vdots\\
%0&0&0&T_{n-2}&S_{n-2,n-1}\\
% 0&0&0&0&T_{n-1}\end{pmatrix}
%\begin{pmatrix}I&&&&\\
%                     &S_{1,2}^{-1}&&&\\
%                      &&S_{2,3}^{-1}&&\\
%                      &&&\ddots&\\
%                      &&&&S_{n-2,n-1}^{-1}
%\end{pmatrix}\\
%&=&
%\begin{pmatrix}T_{0}&I&0&\cdots&0\\
%0&T_{1}&S_{1,2}&\ddots&0\\
%\vdots&\vdots&\ddots&\ddots&\vdots\\
%0&0&0&T_{n-2}&S_{n-2,n-1}\\
% 0&0&0&0&T_{n-1} \end{pmatrix}
%\end{eqnarray*}
%
%
%
%
%
%
%
%And let $$X_2=\begin{pmatrix}I&&&&\\
%                     &I&&&\\
%                      &&S_{2,3}&&\\
%                      &&&\ddots&\\
%                      &&&&S_{n-2,n-1}
%\end{pmatrix}.$$
%Then
%$$X_2X_1TX_1^{-1}X_2^{-1}=\begin{pmatrix}
%T_{0} & I & \\
%&T_{0}&I& \\
%&&T_{0}&S_{2,3}&\\
%&&&\ddots&\ddots&\\
%&&&&T_{n-2}&S_{n-2,n-1}\\
%&&&&&T_{n-1}\\
%\end{pmatrix} $$
%So  if let
%$$X_k=\begin{pmatrix}I&&&&\\
%                     &I&&&\\
%&&\ddots&&\\
%                      &&&I&&\\
%                      &&&&S_{k,k+1}&\\
%                      &&&&&\ddots&\\
%                      &&&&&&S_{n-2,n-1}
%\end{pmatrix}. $$
$$X_nX_{n-1}\cdots X_1TX^{-1}_nX^{-1}_{n-1}\cdots
X^{-1}_1=\begin{pmatrix}
T_{0} & I & \\
&T_{0}&I& \\
&&\ddots&\ddots&\\
&&&T_{0}&I\\
&&&&T_{0}.\\
\end{pmatrix}
$$
Since the operator $T_{0}$ is assumed to be in $B_1(\Omega),$  therefore it is strongly irreducible (cf. \cite[Proposition 2.28]{jw}). 

Let $J_n[T_0]$ denote the operator appearing on the right hand side of the equation displayed above. We claim that $J_n[T_0]$ is strongly irreducible. We have shown in Proposition \ref{pirre} that  the operator $J_2[T_0]$  is strongly irreducible.
Assume that $J_k[T_0]$ is strongly irreducible for all $k<m.$  Now, any idempotent $P$ in the commutant of $J_m[T_0]$ must look like 
$\Big (\begin{smallmatrix} I_{m-1\times m-1} & P_{m-1\times 1}\\ 0& P_{m\,m}.\end{smallmatrix} \Big ),$ or $\begin{pmatrix}0_{m-1\times m-1}& P_{m-1\times 1}\\
0 & P_{mm}\end{pmatrix}$. Thus $P_{mm}$ must commute with $T_0$
and hence $P_{mm}=0\;\mbox{or}\;I$.  By using
Theorem 2.19, it is easy to see that  if $P= \begin{pmatrix}I_{m-1\times m-1}& P_{m-1\times 1}\\
0 & 0 \end{pmatrix}$ or  $P= \begin{pmatrix}0_{m-1\times m-1}& P_{m-1\times 1}\\
0 & I\end{pmatrix},$ then $P$ does not belong to commutant of
$J_m[T_0].$  Thus $P= \begin{pmatrix}I_{m-1\times m-1}& P_{m-1\times 1}\\
0 & I\end{pmatrix}$ or  $P=\begin{pmatrix}0_{m-1\times m-1}& P_{m-1\times 1}\\
0 & 0\end{pmatrix}$. Now, using the equation $P^2=P$, we conclude
that the $m-1\times 1$ block $P_{m-1\times 1}$ in $P$ must be
zero. Thus $P=I$ or $P=0$. This inductively proves that $J_n[T_0]$
must be strongly irreducible.

%, then $X_{n-1}\cdots X_1TX^{-1}_{n-1}\cdots X^{-1}_1$ is strongly
%irreducible. 
The operator $T$ is similar to a strongly irreducible operator and consequently it is strongly irreducible as well. This completes the proof of $(1)$.

Let $P$ be an idempotent in the commutant $\{T\}^{\prime}$ of the operator $T.$ The operator $P$ is then upper triangular, say, $P= \big ( \!\!\big ( P_{ij} \big) \!\!\big )_{i,j=1}^n$ with $P_{ij}=0$ whenever $i >j$ by Proposition \ref{utp}. 
Since $P$ is in $\{T\}^\prime$ and it is upper triangular, a  straightforward matrix computation shows that $P_{ii}$ must  be in $\{T_{n-i}\}^\prime,$ $1\leq i \leq n.$ Thus the only possible choice for the operator $P_{ii}$ is 
%Comparing this two operator matrices, we can find $P_{ii}\in
%\{T_{n-i}\}^{\prime}, \forall 1 \leq i\leq n$ and $P_{ii}$ can
$I$ or $0$. Suppose it were possible to choose $P_{11}=I$ and
$P_{22}=0.$ Then we have that
$$S_{0,1}+P_{12}T_{1}=T_{0}P_{12}$$
contradicting the assumption that $S_{0,1}\not\in
\mbox{ran}\,\sigma_{T_{0}, T_{1}}$. Similarly, for any $i\leq n-1$,
if $P_{ii}=I$ and $P_{i+1i+1}=0,$ then 
$S_{i,i+1}$ must belong to $\mbox{ran}\,\sigma_{T_{i},T_{i+1}},$ again contradicting our hypothesis. We therefore conclude that the only possible idempotents in $\{T\}^{\prime}$ are
either $0$ or $I$. Hence  $T$ is strongly irreducible. This completes the proof of $(2).$

%On the other hand if there exists some $i\leq
%n-1$ such that $S_{i,i+1}\in \mbox{ran}\,\sigma_{T_{i},T_{i+1}},$
%then the matrix computations involving the parts of $P$ and $T,$ ensures the existence of a non-trivial
%idempotent in  the commutant of $T.$  For instance, 
%if $S_{0,1}$ is in $\mbox{ran}\,\sigma_{T_{0},T_{1}},$ that is,  
%$S_{0,1}+P_{12}T_{1}=T_{0}P_{12}$ for some operator $P_{12},$ then $P:=\big ( \!\! \big (P_{ij}\big )\!\!\big )_{n\times n}$
%with $P_{ij}=0$ for all $i,j$ except that $P_{11}=I$ is a  nontrivial idempotent in $\{T\}^{\prime}$.
\end{proof}
\subsection{\large \sf A complete set of unitary invariants for operators in $\tilde{\mathcal F}B_n(\Omega)$} It is easy to give a set unitary invariants for this class which is complete.  
\begin{thm}\label{uinvs}
Suppose $T$ is an operator in $\tilde{\mathcal F}B_n(\Omega)$ and that  
$t_{n-1}$ is a non-vanishing holomorphic section of $E_{T_{n-1}}.$
Then 
\begin{enumerate}
\item[(i)] the curvature $\mathcal K_{T_{n-1}},$ 
\item[(ii)] $\tfrac{\|t_{i-1}\|}{\|t_i\|},$ where $t_{i-1}=S_{i-1,i}(t_i),\, 1\leq i\leq n-1$ 
%\item[(iii)] $\tfrac{\|S_{k,l}(t_l)\|}{\|t_0\|},\;\, 2\leq l\leq n-1, 0\leq k\leq n-3$
\end{enumerate}
are a complete set of unitary invariants for the operator $T.$
%
%Let $T$ and $\tilde{T}$ be elements
%$\widetilde{\mathcal{F}}B_n(\Omega)$. $T$ is unitarily equivalent
%to $\tilde{T}$ if and only if
%$$\mathcal{K}_{T_0}=\mathcal{K}_{\tilde{T}_0}\;\mbox{and}
%\;\frac{\|t_i\|}{\|t_{i-1}\|}=\frac{\|\tilde{t}_i\|}{\|\tilde{t}_{i-1}\|},
%1\leq i\leq n,$$
% where $t_{n-1},\tilde{t}_{n-1}$ are holomorphic frames of $E_{T_{n-1}}$
% and $E_{\tilde{T}_{n-1}}$ respectively and
% $t_{i-1}:=S_{i-1,i}(t_i)$, $\tilde{t}_{i-1}:=\tilde{S}_{i-1,i}(\tilde{t}_i)$ for $1\leq i\leq n-1$.
\end{thm}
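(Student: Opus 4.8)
The plan is to imitate the structure of the proof of Theorem~\ref{maint} for the rank-two case, now exploiting the tighter frame rigidity available in $\tilde{\mathcal F}B_n(\Omega)$: recall that any holomorphic change of frame for $E_T$, $T\in\tilde{\mathcal F}B_n(\Omega)$, must be of the form $\bigl(\!\bigl({j\choose i}\varphi^{(j-i)}\bigr)\!\bigr)$ and corresponds to a change of frame $\varphi\oplus\cdots\oplus\varphi$ for $E_{T_0\oplus\cdots\oplus T_{n-1}}$. The invariance direction was already handled, essentially, in Proposition~\ref{mit}; so the real content is the converse, that the two invariants listed force unitary equivalence. As in the case $n=2$, by passing to a sufficiently small open subset $\Omega_0$ of $\Omega$ we may assume all the relevant sections are non-vanishing holomorphic frames of their respective line bundles.

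First I would fix non-vanishing holomorphic sections $t_{n-1}$ and $\tilde t_{n-1}$ of $E_{T_{n-1}}$ and $E_{\tilde T_{n-1}}$, and push them down the flag by setting $t_{i-1}=S_{i-1,i}(t_i)$ and $\tilde t_{i-1}=\tilde S_{i-1,i}(\tilde t_i)$ for $1\le i\le n-1$, so that $t_i$ (resp.\ $\tilde t_i$) is a non-vanishing holomorphic section of $E_{T_i}$ (resp.\ $E_{\tilde T_i}$). Equality of the curvatures $\mathcal K_{T_{n-1}}=\mathcal K_{\tilde T_{n-1}}$ gives, on shrinking $\Omega_0$ if necessary, a non-vanishing holomorphic function $\varphi$ with $\|t_{n-1}\|^2=|\varphi|^2\|\tilde t_{n-1}\|^2$; then invariant (ii), namely $\|t_{i-1}\|/\|t_i\|=\|\tilde t_{i-1}\|/\|\tilde t_i\|$ for each $i$, propagates this to $\|t_i\|^2=|\varphi|^2\|\tilde t_i\|^2$ for all $i$. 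Next I would build the holomorphic frames $\{\gamma_i\}$ and $\{\tilde\gamma_i\}$ of $E_T$ and $E_{\tilde T}$ recursively from the $t_i$ and $\tilde t_i$ by the formula $\gamma_0=t_0$, $\gamma_k=t_k-\sum_{j=1}^k\frac1{j!}\frac{\partial^j}{\partial w^j}\gamma_{k-j}$ (the formula in the proposition characterizing $\tilde{\mathcal F}B_n(\Omega)$), and then define a bundle map $\Phi:E_T\to E_{\tilde T}$ by $\Phi(\gamma_i)=\sum_{j}{i\choose j}\varphi^{(i-j)}\tilde\gamma_j$, i.e.\ the unique map compatible with the frame-change matrix $\bigl(\!\bigl({j\choose i}\varphi^{(j-i)}\bigr)\!\bigr)$ applied to the $\tilde\gamma$'s.

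The crux is then to check that $\Phi$ is an isometry of Hermitian vector bundles; by Theorem~\ref{eq} (Cowen--Douglas) this yields $T\cong\tilde T$. The cleanest route is to transport the problem to the orthogonal sections: since the $t_i$ are mutually orthogonal and likewise the $\tilde t_i$, the Gram matrices of $\{\gamma_i\}$ and $\{\tilde\gamma_i\}$ are obtained by one and the same (differential-operator) transformation from the diagonal Gram matrices $\mathrm{diag}(\|t_0\|^2,\dots,\|t_{n-1}\|^2)$ and $\mathrm{diag}(\|\tilde t_0\|^2,\dots,\|\tilde t_{n-1}\|^2)$ respectively; the relation $\|t_i\|^2=|\varphi|^2\|\tilde t_i\|^2$ together with the fact that $\Phi$ is precisely the bundle map induced by the change of frame $\varphi\oplus\cdots\oplus\varphi$ on the direct sum bundle $E_{T_0\oplus\cdots\oplus T_{n-1}}$ then forces $\langle\Phi\gamma_i,\Phi\gamma_j\rangle=\langle\gamma_i,\gamma_j\rangle$ for all $i,j$. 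I expect the main obstacle to be precisely this bookkeeping: verifying that conjugating the diagonal metric by $\varphi\oplus\cdots\oplus\varphi$ and then by the binomial differential-operator matrix reproduces the metric on $E_T$ when the lengths scale by $|\varphi|^2$ — this is an induction on $n$ modeled on the computation $\langle\Phi(\gamma_0),\Phi(\gamma_1)\rangle=\frac{\partial}{\partial\bar w}\|\gamma_0\|^2=\langle\gamma_0,\gamma_1\rangle$ carried out for $n=2$ in the proof of Theorem~\ref{maint}, but one must be careful that the orthogonality of the $t_i$'s is what makes the off-diagonal cross terms cancel at every stage. Once isometry is established, Theorem~\ref{eq} closes the argument, and the converse (that unitary equivalence implies equality of (i) and (ii)) is immediate from Theorem~\ref{hdu} (rigidity) and Proposition~\ref{mit}.
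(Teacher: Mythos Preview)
Your approach is correct in principle but takes a genuinely different---and considerably more laborious---route than the paper for the converse direction. You go through the Cowen--Douglas theorem: build the frames $\{\gamma_i\}$ and $\{\tilde\gamma_i\}$, define a bundle map $\Phi:E_T\to E_{\tilde T}$ via the binomial change-of-frame matrix, and then face (as you yourself flag) the combinatorial ``bookkeeping'' of checking $\langle\Phi\gamma_i,\Phi\gamma_j\rangle=\langle\gamma_i,\gamma_j\rangle$ by induction. The paper sidesteps all of this. It never touches the $\gamma$'s or Theorem~\ref{eq}: once $\|t_i\|=|\varphi|\,\|\tilde t_i\|$ is in hand for every $i$, it defines $U_i:\mathcal H_i\to\tilde{\mathcal H}_i$ directly by $U_i(t_i(w))=\varphi(w)\tilde t_i(w)$, observes each $U_i$ is isometric (hence unitary by Proposition~\ref{dr}), and checks the two intertwining relations $U_iT_i=\tilde T_iU_i$ and $U_iS_{i,i+1}=\tilde S_{i,i+1}U_{i+1}$ on the sections---each a one-line computation. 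The diagonal operator $U=\bigoplus_iU_i$ then intertwines $T$ and $\tilde T$ directly.

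What the paper's shortcut exploits, and your route does not, is precisely the feature that distinguishes $\tilde{\mathcal F}B_n(\Omega)$ from $\mathcal F B_n(\Omega)$: the only off-diagonal blocks are the $S_{i,i+1}$, so a diagonal unitary need only satisfy $n-1$ pairwise intertwining conditions, and these are immediate from $S_{i,i+1}(t_{i+1})=t_i$, $\tilde S_{i,i+1}(\tilde t_{i+1})=\tilde t_i$. Your bundle-isometry approach would of course also work (and is closer in spirit to the $n=2$ proof of Theorem~\ref{maint}), but the verification you defer is exactly the hard part, and it is unnecessary here.
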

\begin{proof}
Pick any two  operators $T$ and $\tilde{T}$ in $\tilde{\mathcal F}B_n(\Omega)$ and assume that they are unitarily equivalent via some unitary operator $U,$ that is, $UT=\tilde{T}U.$ By Theorem
\ref{hdu}, there exist $U_i:\mathcal{H}_i\to
\widetilde{\mathcal{H}}_i$ such that $U=\oplus_{i=0}^{n-1} U_i.$
%$$U=\begin{pmatrix}
%U_{0} & 0 & 0&\cdots& 0\\
%0 &U_{1}& 0&\cdots& 0\\
%\vdots&\vdots &\ddots& &\vdots\\
%0& 0 &\cdots&U_{n-2}& 0\\
%0& 0&\cdots&0&U_{n-1}\\
%\end{pmatrix}.$$
Thus $U_iT_i=\tilde{T}_iU_i,\, 0\leq i\leq n-1,$ and
$U_iS_{i,i+1}=\tilde{S}_{i,i+1}U_{i+1},\,0\leq i\leq n-2.$ Consequently, 
\begin{eqnarray}\label{inv2} U_i(t_i(w))=\phi(w)\tilde{t}_i(w),\; 0\leq i\leq n-1,\end{eqnarray} 
where $\phi$ is some non zero holomorphic
function. By equation (\ref{inv2}), we have
$$\mathcal{K}_{T_{n-1}}=\mathcal{K}_{\tilde{T}_{n-1}}\;\;\mbox{and}\;\; \frac{\|t_{i-1}\|}{\|\tilde{t}_{i-1}\|}=\frac{\|t_i\|}{\|\tilde{t}_i\|},
\;\;1\leq i\leq n-1.$$

Conversely if we assume that $T$ and $\tilde{T}$  are operators in $\tilde{\mathcal F}B_n(\Omega)$ for which these invariants are the same, then there exist a non-zero holomorphic function $\phi$ defined on $\Omega$ such that
$$\|t_i(w)\|=|\phi(w)|\,\|\tilde{t}_i(w)\|\; 0\leq i\leq n-1.$$
For $ 0\leq i\leq n-1,$ define an operator $U_i:\mathcal{H}_i\to \widetilde{\mathcal{H}}_i$ as
follows: 
$$U_i(t_i(w))=\phi(w)\tilde{t}_i(w),\; w\in
\Omega.$$ 
For $0\leq i\leq n-1,$
\begin{eqnarray*}
\|U_i(t_i(w))\|&=&\|\phi(w)\tilde{t}_i(w)\|\\
&=& |\phi(w)|\|\tilde{t}_i(w)\|\\
&=&\|t_i(w)\|.
\end{eqnarray*}
Thus $U_i$ extend to an isometry from $\mathcal{H}_i$ to
$\widetilde{\mathcal{H}}_i$ and $U_i T_i=\tilde{T}_i U_i$. Since
$U_i$ is isometric and $U_i T_i=\tilde{T}_i U_i$, it follows, using  Proposition \ref{dr}, that $U_i$ is unitary. It is easy to see that $
U_iS_{i,i+1}=\tilde{S}_{i,i+1}U_{i+1}$ for $0\leq i\leq n-2$ also. Hence setting $U= U_0 \oplus \cdots \oplus U_{n-1},$ we see that 
%$$U=\begin{pmatrix}
%U_{0} & 0 & 0&\cdots& 0\\
%0 &U_{1}& 0&\cdots& 0\\
%\vdots&\vdots &\ddots& &\vdots\\
%0& 0 &\cdots&U_{n-2}& 0\\
%0& 0&\cdots&0&U_{n-1}\\
%\end{pmatrix}.$$
$U$ is unitary and $UT=\tilde{T}U$ completing the proof.
\end{proof}

\section{An application to Module tensor products}
The localization of a module at a point of the spectrum is obtained by tensoring with the one dimensional module
of evaluation at that point. The  localization technique has played a prominent role in the structure theory of modules. More recently, they have found their way into the study of Hilbert modules (cf. \cite{dp}). An initial attempt was made in \cite{dmc} to see if higher order localizations would be of some use in obtaining invariants for quotient Hilbert modules. Here we give an explicit description of the module tensor products over the polynomial ring in one variable.  

There are several different ways in which one may define the action of the polynomial ring on $\mathbb C^k.$ The following lemma singles out the possibilities for the module action which evaluates a function at $w$ along with a finite number of its derivatives, say $k-1$, at $w.$  Let $f$ be a polynomial in one variable. Set  
$$\mathcal{J}_{\boldsymbol \mu}(f)(z)=\left(
                         \begin{array}{cccc}
                           f(z) & 0 & \cdots & 0 \\
                           \mu_{2,1}\tfrac{\partial }{\partial z}f(z) & f(z) & \cdots & 0 \\
                           \vdots & \vdots & \ddots & \vdots \\
                           \mu_{k,1}\tfrac{\partial^{k-1}}{\partial z^{k-1} }f(z) & \mu_{k-1,1}\tfrac{\partial^{k-2}}{\partial z^{k-2}}f (z)& \cdots & f(z) \\
                         \end{array}
                       \right)
.$$
\begin{lem}
${\mathcal{J}_{\boldsymbol \mu}}(fg)=\mathcal J_{\boldsymbol \mu}(f)\mathcal J_{\boldsymbol \mu}(g)$
if and only if
$$(p+1-j-l)\mu_{p+1-j,l}=\mu_{p+1-j,l+1}\,\mu_{l+1,l},\;1\leq l\leq p-2,\;1\leq j< p-l+1,\mbox{and}\;\mu_{i,i}=1,\;1\leq i\leq k,$$
if and only if $$\mu_{p,l}\;\mu_{l,i}=\tbinom
{p-i}{l-i}\mu_{p,i},\;1\leq p,l,i\leq k,\,i\leq l\leq
p\;\mbox{and}\;\mu_{i,i}=1,\;1\leq i\leq k.$$
\end{lem}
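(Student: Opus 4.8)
The identity is pure bookkeeping once the $(p,l)$-entry of each side is written out, so my plan is: first compute the two entries using the Leibniz rule; then read off the equivalence with the last displayed system by separating the coefficients of $\partial^a f\,\partial^b g$; and finally reconcile the last and the middle systems by a short induction. Write $\partial=\partial/\partial z$ and record that the $(p,l)$-entry of $\mathcal J_{\boldsymbol\mu}(f)$ is $\mu_{p,l}\,\partial^{p-l}f$ for $l\le p$ (with $\mu_{p,p}=1$) and $0$ otherwise. The Leibniz rule gives, for $l\le p$, that $[\mathcal J_{\boldsymbol\mu}(fg)]_{p,l}=\mu_{p,l}\sum_{a+b=p-l}\binom{p-l}{a}\partial^a f\,\partial^b g$, while $[\mathcal J_{\boldsymbol\mu}(f)\mathcal J_{\boldsymbol\mu}(g)]_{p,l}=\sum_{m=l}^{p}\mu_{p,m}\mu_{m,l}\,\partial^{p-m}f\,\partial^{m-l}g$. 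Since $f$ and $g$ range over all polynomials, their jets $(f(z),\partial f(z),\dots)$ and $(g(z),\partial g(z),\dots)$ can be prescribed arbitrarily at a point, so the two bilinear expressions coincide for all $f,g$ if and only if their coefficients agree term by term.

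Matching the coefficient of $\partial^a f\,\partial^b g$ (so $a+b=p-l$ and $m:=p-a=l+b$) and using $\binom{p-l}{a}=\binom{p-l}{m-l}$ shows at once that $\mathcal J_{\boldsymbol\mu}$ is multiplicative if and only if $\mu_{p,m}\mu_{m,l}=\binom{p-l}{m-l}\mu_{p,l}$ for all $l\le m\le p\le k$, together with $\mu_{i,i}=1$ (the latter being the diagonal comparison, and in any case already part of the definition of $\mathcal J_{\boldsymbol\mu}$); after relabelling $(l,m)$ as $(i,l)$ this is precisely the last displayed system. To pass to the middle system, one direction is immediate: in the last system put $(p,l,i)=(p+1-j,\,l+1,\,l)$ and use $\binom{p+1-j-l}{1}=p+1-j-l$. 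For the converse I would prove $\mu_{p,l}\mu_{l,i}=\binom{p-i}{l-i}\mu_{p,i}$ by induction on the gap $l-i$: the cases $l-i=0$ and $l-i=1$ are respectively the normalization $\mu_{i,i}=1$ and the middle system itself; and for $l-i\ge 2$ one rewrites $\mu_{l,i}=(l-i)^{-1}\mu_{l,i+1}\mu_{i+1,i}$ via the middle system, applies the inductive hypothesis to $\mu_{p,l}\mu_{l,i+1}$ (gap $l-i-1$) and the case $l-i=1$ to $\mu_{p,i+1}\mu_{i+1,i}$, and then collapses the product of binomials using the absorption identity $\tfrac{p-i}{l-i}\binom{p-i-1}{l-i-1}=\binom{p-i}{l-i}$.

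I do not anticipate a serious obstacle. The two places that need a little care are: (a) the legitimacy of the coefficient-separation step, i.e.\ the fact that the jets of $f$ and $g$ at a point can be prescribed independently, which is what forces coefficientwise equality of the two bilinear forms; and (b) the index bookkeeping in the induction, where one should note that the middle system is stated exactly for $1\le i\le k-2$ --- precisely the range in which it gets invoked --- while the boundary instances ($i$ close to $k$, or $k$ small) are vacuous or follow directly from $\mu_{i,i}=1$. Everything else reduces to routine manipulations of binomial coefficients.
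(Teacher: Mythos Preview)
Your argument is correct and follows the same core computation as the paper: write out the $(p,l)$-entry of $\mathcal J_{\boldsymbol\mu}(f)\mathcal J_{\boldsymbol\mu}(g)$, compare with $\mu_{p,l}\,\partial^{p-l}(fg)$ via the Leibniz rule, and read off the binomial relation $\mu_{p,l}\mu_{l,i}=\binom{p-i}{l-i}\mu_{p,i}$. The paper in fact only writes out one direction (sufficiency of the last system for multiplicativity) and dismisses the rest as ``straightforward verification''; your treatment is more complete in that you (i) justify the coefficient-separation step by noting that the jets of $f$ and $g$ at a point may be prescribed independently, and (ii) supply the induction on the gap $l-i$ that explicitly establishes the equivalence between the middle and the last systems, which the paper does not spell out at all.
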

\begin{proof} The necessity part of the proof is a straightforward verification. To prove the sufficiency, for $1\leq i,j\leq k$ and $i\leq j,$ note that 
\begin{eqnarray*}
(\mathcal J_{\boldsymbol \mu}(f)(z)\mathcal J_{\boldsymbol \mu}(g)(z))_{i,j}&=&
\sum_{l=0}^{i-j}\mu_{i,j+l}\,\mu_{j+l,j}
(\tfrac{\partial^{i-j-l}}{\partial z^{i-j-l} }f(z)) (\tfrac{\partial^l}{\partial z^l}g(z))\\
&=&\sum_{l=0}^{i-j} \tbinom {i-j}{i-j-l}\mu_{i,j} (\tfrac{\partial^{i-j-l}}{\partial z^{i-j-l} }f(z)) (\tfrac{\partial^l}{\partial z^l}g(z))\\
&=&\mu_{i,j}\sum_{l=0}^{i-j} \tbinom {i-j}{i-j-l}(\tfrac{\partial^{i-j-l}}{\partial z^{i-j-l} }f(z)) (\tfrac{\partial^l}{\partial z^l}g(z))\\
&=& \mu_{i,j} \tfrac{\partial^{i-j}}{\partial z^{i-j}}(f g)(z)\\
&=& (\mathcal J_{\boldsymbol \mu}(f g)(z))_{i,j}.
\end{eqnarray*}
For $i>j,$ 
$$(\mathcal J_{\boldsymbol \mu}(f)(z)\mathcal J_{\boldsymbol \mu}(g)(z))_{i,j}=(\mathcal J_{\boldsymbol \mu}(f
g)(z))_{i,j}=0.$$ Hence we have
$$\mathcal J_{\boldsymbol \mu}(fg)=\mathcal J_{\boldsymbol \mu}(f)\mathcal J_{\boldsymbol \mu}(g).$$
\end{proof}
%Set $\mu_{i\,j}={{i-1}\choose{j-1}}$ if $i\geq j$ and $0$ otherwise. These constants obey the recurrence relation prescribed in the lemma. Having chosen these constants,
%Now, 
For $\mathbf x$ in $\mathbb{C}^k,$ and $f$ in the polynomial ring $P[z],$ define the module action as follows: $$f\cdot \mathbf x=\mathcal J_{\boldsymbol \mu}(f)(w)\mathbf x.$$ 

Suppose $T_0:\mathcal M\to \mathcal M$ is an operator in ${B}_1(\Omega).$ Assume that the operator $T$ has been realized as the adjoint of a multiplication operator acting on a Hilbert space of functions possessing a reproducing kernel $K$. Then the polynomial ring acts on the Hilbert space  $\mathcal{M}$ naturally by point-wise multiplication making it a module. We construct a module of $k$ - jets by setting 
$$J\mathcal{M}=\Big \{\sum_{l=0}^{k-1}\tfrac{\partial^i}{\partial
z^i}{h}\otimes {\epsilon}_{i+1}: h\in\mathcal{M}\Big \},$$ 
where $\epsilon_{i+1}, \, 0 \leq i \leq k-1,$ are the standard basis vectors in $\mathbb C^k.$ There is a natural module action on $J\mathcal M,$ namely, 
$$\Big (f, \sum_{l=0}^{k-1}\tfrac{\partial^i}{\partial z^i}{h} \Big )\mapsto
\mathcal{J}(f)\Big (\sum_{l=0}^{k-1}\tfrac{\partial^i}{\partial
z^i}{h}\otimes {\epsilon}_{i+1}\Big ),\, f\in P[z],\,
h\in\mathcal{M},$$ 
where 
\beqa \mathcal{J}(f)_{i,j} =  \begin{cases}  {{i-1}\choose{j-1}}
\partial^{i-j}f &\mbox{if} \,\, i\geq j,\\
0 & \mbox{otherwise}.
\end{cases}
\eeqa
The module tensor product $J\mathcal{M}\otimes_{\mathcal{A}({\Omega})}\mathbb{C}_w^k$ is easily identified with the quotient module $\mathcal{N}^{\bot},$ where $\mathcal N\subseteq \mathcal M$ is the sub-module spanned by the vectors  
$$ \big\{ \sum_{l=1}^{k}(J_f\cdot{\bf
h}_l\otimes \epsilon_l- {\bf h}_l\otimes
({\mathcal{J}_{\boldsymbol \mu}}(f))(w)\cdot\epsilon_l): {\bf h}_l\in
J\mathcal{M},\epsilon_l
\in \mathbb{C}^k, f\in P[z] \big\}.$$
%\begin{eqnarray*}\mathcal{N}=\mbox{Span}\big\{\!\!\!\!\!\!\!\!\!\!&&\sum_{l=1}^{k}(J_f\cdot{\bf
%h}_l\otimes \epsilon_l- {\bf h}_l\otimes
%(\tilde{\mathcal{J}}(f))(w)\cdot\epsilon_l):\\&& {\bf h}_l\in
%J\mathcal{M},\epsilon_l
%\in \mathbb{C}^k\;\mbox{are standard basis vectors} \\
%&&\mbox{and}\; f\in \mathcal{A}(\Omega),\;1\leq l\leq k\big\}.
%\end{eqnarray*}
%Define
%$$J\mathcal{M}\otimes_{\mathcal{A}({\Omega})}\mathbb{C}_w^k:=\mathcal{N}^{\bot}$$
Following the  proof of the lemma  \ref{lem1}  in \cite[Lemma
4.1]{dmc}, we can prove:
\begin{lem}\label{lem1}
The module tensor product
$J\mathcal{M}\otimes_{P[z]}\mathbb{C}_w^k$ is
spanned by the vector $e_p(w)$ in
$J\mathcal{M}\otimes_{\mathcal{A}({\Omega})}\mathbb{C}_w^k$, where
$$e_p(w)=\sum_{l=1}^p
b_{p,l}JK(\cdot,w)\epsilon_{p-l+1}\otimes\epsilon_l,\;1\leq p\leq
k$$ where
$$b_{p,l}= \frac{\mu_{p-j+1,l}}{\tbinom {p-l} {j-1}} b_{p,p-j+1},\;l+j< p+1.$$
%Moreover, if $b_{p,l}=\tfrac{\tbinom {p-1} {t-1}}{\tbinom {p-l} {t-l}}b_{t,l}\;,1\leq l\leq t\leq p,$
%then module action $J_f^*\otimes I:J\mathcal{M}\otimes_{\mathcal{A}(\Omega)}\mathbb{C}^k\to
%J\mathcal{M}\otimes_{\mathcal{A}(\Omega)}\mathbb{C}^k$ is given by
%$(J_f^*\otimes I)e_p=\sum_{l=1}^{p}\langle(\mathcal{F}f)(w)^*\epsilon_p,\epsilon_l\rangle e_l$.
%and
%$$(p+1-j-l)\mu_{p+1-j,l}=\mu_{l+1,l}\,\mu_{p+1-j,l+1},\;1\leq l\leq p-2,\;1\leq j< p-l+1.$$
\end{lem}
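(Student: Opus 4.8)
The plan is to follow the argument of \cite[Lemma 4.1]{dmc} adapted to the jet module $J\mathcal M$ and the $\mathcal J_{\boldsymbol\mu}$-action. First I would fix $w\in\Omega$ and describe the submodule $\mathcal N$ explicitly: it is spanned by the vectors $\sum_{l=1}^{k}\big(J_f\cdot\mathbf h_l\otimes\epsilon_l-\mathbf h_l\otimes(\mathcal J_{\boldsymbol\mu}(f))(w)\epsilon_l\big)$ as $f$ ranges over $P[z]$, $\mathbf h_l\in J\mathcal M$. Since $J\mathcal M$ is itself generated (as a $P[z]$-module) by the elements $JK(\cdot,w')\epsilon_i$, $w'\in\Omega$, $1\le i\le k$, the quotient module $J\mathcal M\otimes_{P[z]}\mathbb C^k_w$ is spanned by the images of $JK(\cdot,w')\epsilon_i\otimes\epsilon_l$, and by holomorphicity these in turn are determined by their values and derivatives at $w'=w$. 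Thus the first step is to reduce the spanning set to $\{JK(\cdot,w)\epsilon_i\otimes\epsilon_l:1\le i,l\le k\}$ together with $\partial$-derivatives in the first variable, and then use the module relations to express everything in terms of a diagonal set.

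Next I would compute the key relation. Taking $f=(z-w)^m$ in the definition of $\mathcal N$, the matrix $(\mathcal J_{\boldsymbol\mu}(f))(w)$ is strictly lower triangular with exactly one nonzero band (the $m$-th subdiagonal, with entries $\mu_{\bullet,\bullet}\,m!$), because all lower-order derivatives of $(z-w)^m$ vanish at $w$; simultaneously $J_{(z-w)^m}$ applied to $JK(\cdot,w)\epsilon_i$ produces, via the Leibniz matrix $\mathcal J(f)_{i,j}={i-1\choose j-1}\partial^{i-j}f$, a combination of the jets $\partial^{s}K(\cdot,w)$. Matching coefficients across the tensor-product relation yields, for each $p$, a linear recursion among the coefficients of $JK(\cdot,w)\epsilon_{p-l+1}\otimes\epsilon_l$, $1\le l\le p$. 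Solving this recursion — here the identity $\mu_{p,l}\mu_{l,i}=\binom{p-i}{l-i}\mu_{p,i}$ from the preceding Lemma is what makes the solution consistent and closed-form — gives precisely $e_p(w)=\sum_{l=1}^p b_{p,l}\,JK(\cdot,w)\epsilon_{p-l+1}\otimes\epsilon_l$ with $b_{p,l}=\dfrac{\mu_{p-j+1,l}}{\binom{p-l}{j-1}}\,b_{p,p-j+1}$ for $l+j<p+1$, and one checks that the vectors $e_1(w),\dots,e_k(w)$ are linearly independent and exhaust the quotient, so they span $J\mathcal M\otimes_{P[z]}\mathbb C^k_w$.

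Finally I would verify that nothing is lost in passing from $\mathcal A(\Omega)$ to $P[z]$: since $\Omega$ is a bounded open subset of $\mathbb C$, polynomials are dense in $\mathcal A(\Omega)$ in the relevant topology and the module action is continuous, so the module tensor product over $P[z]$ agrees with the one over $\mathcal A(\Omega)$ appearing in the description of $\mathcal N^{\perp}$; this is exactly the point where we invoke the cited proof of \cite[Lemma 4.1]{dmc} verbatim. The main obstacle I anticipate is bookkeeping: disentangling the two triangular matrices $\mathcal J(f)$ (Leibniz/binomial) acting on the jet index and $\mathcal J_{\boldsymbol\mu}(f)(w)$ (the $\mu$-weighted band) acting on the $\mathbb C^k$ index, and then correctly solving the resulting recursion for $b_{p,l}$ — the computation is routine in spirit but the indices must be tracked with care, and the closed form for $b_{p,l}$ only drops out after using the multiplicativity relations for the $\mu_{i,j}$ established in the previous lemma.
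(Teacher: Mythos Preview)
Your proposal is correct and takes essentially the same approach as the paper: the paper itself does not give an independent proof but simply says ``Following the proof of \cite[Lemma 4.1]{dmc}, we can prove'' the lemma, and your outline does exactly that, adapting the cited argument to the $\mathcal J_{\boldsymbol\mu}$-action and invoking the multiplicativity relation for the $\mu_{i,j}$ from the preceding lemma to close the recursion for the $b_{p,l}$.
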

The set of vectors $\{e_p(w):w\in\Omega^*,\,1\leq p\leq k\}$ define a natural holomorphic frame for a vector bundle, say $J_{\mbox{\scriptsize loc}}(\mathcal{E}).$ This vector bundle also inherits a Hermitian structure from that of $J\mathcal{M}\otimes_{\mathcal{A}({\Omega})}\mathbb{C}_w^k,$ which furthermore defines a  positive definite kernel on $\Omega\times \Omega:$
%$$((J_{\mbox{\scriptsize loc}}(K(z,w))))_{p,q}:= \langle e_p(z),e_q(w)\rangle\;\;\;1\leq p,q\leq k.$$
%Now,
\begin{eqnarray*}
J_{\mbox{\scriptsize loc}}K(z,w)&=& \big (\!\!\big (\langle e_p(w),e_q(z) \rangle\big )\!\!\big )\\
&=& \sum_{l=1}^{k} D(l)J_{k-l+1} K(z,w)D(l),
\end{eqnarray*}
where $J_r K(z,w)= \begin{pmatrix} 0_{k-r\times k-r} & 0_{k-r\times r}\\
 0_{r\times k-r} & \tilde{J}_r K(z,w)\\
 \end{pmatrix}$ and $D(l)$ is diagonal. 
Moreover, $D(l)_{m,m}=b_{m+l-1,l}$ and

$$ \tilde{J}_r K(z,w)=\begin{pmatrix} K(z,w) &  \tfrac{\partial}{\partial \bar w} K(z,w) & \cdots & \tfrac{\partial^{r-1}}{\partial\bar{w}^{r-1}}K(z,w) \\
\frac{\partial}{\partial z} K(z,w) & \tfrac{\partial^2}{\partial z  {\partial} \bar w} K(z,w)& \cdots & \tfrac{\partial^r}{\partial z  {\partial} {\bar w}^{r-1}} K(z,w) \\
\vdots & \vdots & \ddots & \vdots\\
\tfrac{\partial^{r-1}}{\partial z^{r-1}} K(z,w) & \frac{\partial^r}{\partial z^{r-1} \partial \bar {w}}K(z,w)& \cdots & \tfrac{\partial^{2r-2}}{\partial z^{r-1}\partial{\bar w}^{r-1}} K(z,w)\\
\end{pmatrix}.$$
 
The two Hilbert spaces $\mathcal{M}$ and $\mathcal{M}\otimes \mathbb{C}^{k}$ may be identified via the map $J_{k-l+1},$ which is given by the formula 
 $$J_{k-l+1}(h)= \sum_{p=0}^{k-l} b_{p+l-1,l}\tfrac{\partial^p}{\partial z^p} h \otimes \epsilon_{p+l}.$$ Since  
 $J_{k-l+1}$ is injective, we may choose an inner product on $J_{p-l+1}{\mathcal{M}}$  making it  unitary.

\begin{prop}\cite[Proposition 4.2]{dmc} The Hilbert module $J_{\mbox{\scriptsize loc}}(\mathcal{M})$ admits a direct sum decomposition of the form $\oplus_{l=1}^{k}J_{k-l+1}{\mathcal{M}},$ and the corresponding reproducing kernel is the sum 
$$\sum_{l=1}^{k}D(l)J_{k-l+1} K(z,w)D(l).$$ 
\end{prop}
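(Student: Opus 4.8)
The plan is to prove two things, the displayed formula for the reproducing kernel and the orthogonal direct sum decomposition, and then combine them. The kernel identity $J_{\mbox{\scriptsize loc}}K(z,w)=\sum_{l=1}^{k}D(l)J_{k-l+1}K(z,w)D(l)$ is the one already recorded in the paragraph preceding the statement: one substitutes the formula $e_p(w)=\sum_{l=1}^{p}b_{p,l}\,JK(\cdot,w)\epsilon_{p-l+1}\otimes\epsilon_l$ of Lemma \ref{lem1} into $J_{\mbox{\scriptsize loc}}K(z,w)=\big(\!\!\big(\langle e_p(w),e_q(z)\rangle\big)\!\!\big)$, uses the reproducing property of the jet kernel $JK$ to evaluate the inner products, and then reorganises the resulting double sum by the ``tensor slot'' index $l$; the block of terms attached to a fixed $l$ is exactly the congruence $D(l)J_{k-l+1}K(z,w)D(l)$ of the jet kernel $J_{k-l+1}K$. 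So I would record that computation and move to the decomposition, which carries the real content.

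For the decomposition, set $K_l(z,w):=D(l)J_{k-l+1}K(z,w)D(l)$. Each $K_l$ is a positive $\mathcal M_k(\mathbb C)$-valued kernel on $\Omega^*\times\Omega^*$: it is the congruence, by the invertible diagonal matrix $D(l)$, of the jet kernel $J_{k-l+1}K$, whose only nonzero block $\tilde J_{k-l+1}K(z,w)=\big(\!\!\big(\partial_z^{\,i-1}\partial_{\bar w}^{\,j-1}K(z,w)\big)\!\!\big)$ is positive because it is the Gram matrix of the holomorphic functions $\partial_{\bar w}^{\,j-1}K(\cdot,w)$. Let $\mathcal H_l$ be the reproducing kernel Hilbert space of $K_l$. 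By Aronszajn's theorem on sums of reproducing kernels, the Hilbert space with kernel $\sum_{l=1}^{k}K_l$ equals the vector-space sum $\sum_{l=1}^{k}\mathcal H_l$, with $\|f\|^2=\min\big\{\sum_{l}\|f_l\|_{\mathcal H_l}^2:\ f=\sum_{l}f_l,\ f_l\in\mathcal H_l\big\}$; and once the $\mathcal H_l$ are known to be linearly independent, in the sense that $\sum_{l}f_l=0$ with $f_l\in\mathcal H_l$ forces every $f_l=0$, this decomposition is unique, the minimum is attained with no cross terms, and $\sum_{l}\mathcal H_l=\bigoplus_{l=1}^{k}\mathcal H_l$ is an orthogonal direct sum.

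Next I would identify the summands. A direct computation with the constants $b_{p,l}$ and $\mu_{i,j}$ shows that the jet map $J_{k-l+1}\colon\mathcal M\to\mathcal M\otimes\mathbb C^k$, $J_{k-l+1}(h)=\sum_{p=0}^{k-l}b_{p+l-1,l}\,\partial_z^{\,p}h\otimes\epsilon_{p+l}$, transports the reproducing kernel $K$ of $\mathcal M$ to $K_l$; since it is injective, transplanting the norm of $\mathcal M$ makes it a unitary from $\mathcal M$ onto $\mathcal H_l$, so $\mathcal H_l=J_{k-l+1}\mathcal M$ with precisely the inner product fixed in the discussion preceding the statement. Linear independence of the $\mathcal H_l$ is then a triangularity argument: $J_{k-l+1}\mathcal M$ takes values in $\mathrm{span}\{\epsilon_l,\dots,\epsilon_k\}$, and the $\epsilon_l$-component of $J_{k-l+1}(h)$ is a nonzero scalar multiple of $h$ itself. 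Hence in a relation $\sum_{l=1}^{k}f_l=0$ with $f_l=J_{k-l+1}(h_l)$, reading off the $\epsilon_1$-component leaves only the $l=1$ term and forces $h_1=0$; reading off the $\epsilon_2$-component then forces $h_2=0$; continuing through $\epsilon_k$ gives all $h_l=0$. This same triangularity shows that $e_1(w),\dots,e_k(w)$ are linearly independent for every $w$, so the $e_p$ really do form a holomorphic frame and $\big(\!\!\big(\langle e_p(w),e_q(z)\rangle\big)\!\!\big)$ is indeed the reproducing kernel of $J_{\mbox{\scriptsize loc}}(\mathcal M)$, as was used above. Putting the pieces together yields $J_{\mbox{\scriptsize loc}}(\mathcal M)=\bigoplus_{l=1}^{k}J_{k-l+1}\mathcal M$ with reproducing kernel $\sum_{l=1}^{k}D(l)J_{k-l+1}K(z,w)D(l)$.

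The hard part will be the verification that the jet map $J_{k-l+1}$ carries $K$ onto $K_l=D(l)J_{k-l+1}K\,D(l)$: this is the only place where the combinatorial identities among the $b_{p,l}$ — equivalently, the multiplicativity of $\mathcal J_{\boldsymbol\mu}$ established in the lemma preceding Lemma \ref{lem1} — are genuinely used, and it demands careful bookkeeping of binomial coefficients and derivatives. Everything else (positivity of $K_l$, Aronszajn's sum theorem, and the triangular independence argument) is routine.
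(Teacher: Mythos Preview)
The paper does not actually prove this proposition in-text; it is quoted from \cite[Proposition 4.2]{dmc}, with only the kernel identity $J_{\mathrm{loc}}K=\sum_{l} D(l)J_{k-l+1}K\,D(l)$ recorded (without derivation) in the paragraph preceding the statement, and the remark that each $J_{k-l+1}$ is injective so that an inner product may be transplanted to its range. No argument for the orthogonality of the summands is given here.

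Your proposal therefore supplies what the paper omits. The route you take --- Aronszajn's theorem on sums of positive kernels, together with the triangular structure of the ranges $J_{k-l+1}\mathcal M\subseteq\mathcal M\otimes\mathrm{span}\{\epsilon_l,\dots,\epsilon_k\}$ to force linear independence (hence uniqueness of the Aronszajn decomposition, hence orthogonality) --- is correct and self-contained. Your identification of each summand $\mathcal H_l$ with $J_{k-l+1}\mathcal M$ via the jet map transporting $K$ to $K_l$ is precisely the content of the paper's sentence just before the proposition, so on that point you and the paper agree. The one place to be careful when you write it out is the bookkeeping you flag yourself: the paper's displayed formula for $J_{k-l+1}(h)$ has an index that reads $b_{p+l-1,l}$, which at $p=0$ gives $b_{l-1,l}$ rather than the expected $b_{l,l}$; you will want to fix a consistent convention before verifying the kernel transport.
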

Let $\gamma_0$ be a non-vanishing holomorphic section for the line bundle $E$ corresponding to the operator $T_0.$ Put $b_{1,1}t_0(w)=\gamma_0(w)$  and for $1\leq l\leq k-1,$ let \begin{enumerate}
\item $t_l(w):=\sum_{i=0}^{k-l-1}\overline{b}_{l+1+i,l+1}\tfrac{\partial^{i}}{\partial z^i}K(\cdot,w)\otimes\epsilon_{l+1+i},$ 
\item  $\gamma_l(w)=\sum_{i=1}^{l+1}b_{l+1,i}\tfrac{\partial^{l+1-i}}{{\partial}\bar{w}^{l+1-i}}t_{i-1}(w).$
\end{enumerate}
%
%
% Multiplication operator $M$ on each Hilbert space  $J_{l}(\mathcal{M}),\,1\leq l\leq k,$ is not defined but on
% $J_{\mbox{\scriptsize loc}}(\mathcal{M})$ is defined and hence by closed graph theorem
% it is bounded. If we assume that vector valued polynomial is dense in
% $J_{\mbox{\scriptsize loc}}(\mathcal{M})$, then it will follow that dimension
% of kernel of $M^*-\bar w$ is constant for each $w$ and equal to $k$.
%
Now, $\{\gamma_0,\gamma_1,\cdots,\gamma_{k-1}\}$ are eigenvectors of the operator $M_z^*-\bar w$ acting on the Hilbert space $\mathcal M_{\rm loc}$.

Since $(M_z^*-\bar w)\gamma_1(w)=0,$ it follows that $(M_z^*-\bar w)t_1(w)=-\frac{b_{2,1}}{b_{2,2}}t_0(w),$ which is equivalent to $(M_z^*-\bar w)t_1(w)=-\mu_{2,1}t_0(w).$ 

Suppose $(M_z^*-\bar w)t_l(w)=-\mu_{l+1,l}t_{l-1}(w)$ 
 for $1\leq l\leq r.$ 
Again, since $(M_z^*-\bar w)\gamma_{r+1}(w)=0,$ it follows that 
 \begin{eqnarray*}
\lefteqn{(M_z^*-\bar w)t_{r+1}(w)}\\
 &=& \tfrac{1}{b_{r+2,r+2}}
 \big\{(-(r+1) b_{r+2,1}{\bar\partial}^{r}t_0(w))\\
 &&%\phantom{~~~~~~~~~~~~~~~~~~~~~~~~~~~~~~~~~~~~~~~~~~~~~}
 -\sum_{i=2}^{r+1}b_{r+2,i}(-\mu_{i,i-1}
 {\bar \partial}^{r+2-i}t_{i-2}(w)+(r+2-i){\bar\partial}^{r+1-i}t_{i-1}(w))\big\}\\
 &=&\tfrac{1}{b_{r+2,r+2}}\big\{ \sum_{i=1}^{r}(-(r+2-i)b_{r+2,i}+b_{r+2,i+1}\mu_{i+1,i}){\bar\partial}^{r+1-i}t_{i-1}(w)
 - b_{r+2,r+1}t_{r}(w)\big\}\\
 &=& \tfrac{b_{r+2,r+1}}{b_{r+2,r+2}}t_{r}(w)\\
 &=& \mu_{r+2,r+1}t_{r}(w)
 \end{eqnarray*}

Let  $\Gamma:=J_k\oplus J_{k-1}\oplus\ldots\oplus J_1,$ be the unitary from $\widetilde{\mathcal M}:=\mathcal{M}_0\oplus\cdots \mathcal{M}_{k-1}$ to $\mathcal M_{\rm loc},$ where each of the summands $\mathcal{M}_0,\ldots , \mathcal M_{k-1}$ is equal to $\mathcal{M}.$ Let 
 $K_l(\cdot,w):=J_{k-l}^*t_l(w)=K(\cdot,w),$ $0\leq l\leq k-1.$
Now, we describe the operator $T:= \Gamma^* M^* \Gamma,$ where $M$ is the multiplication operator on $\mathcal M_{\rm loc}.$ 
For $1\leq l\leq k-1,$ set $T_l:= P_{\mathcal{M}_l}T_{|\mathcal{M}_l}$ and note that  
%
% $\Gamma:=J_k\oplus J_{k-1}\oplus\ldots\oplus J_1$, $\widetilde{\mathcal{M}}:= \oplus_{l=0}^{k-1}\mathcal{M}_{l},$ where $\mathcal{M}_l=\mathcal{M}$,
% $K_l(\cdot,w):=J_{k-l}^*t_l(w)=K(\cdot,w),\;0\leq l\leq k-1$,
% $T:= \Gamma^* M_z^*\Gamma$ and $\widetilde{\mathcal{M}}:= \oplus_{l=0}^{k-1}\mathcal{M}_{l}$.
% Here, $T:\widetilde{\mathcal{M}}\to\widetilde{\mathcal{M}}$, $T_l:= P_{\mathcal{M}_l}T_{|\mathcal{M}_l},\, 0\leq l\leq k-1$.
%
%
\begin{eqnarray*}
T(K_l(\cdot,w))&=& (\Gamma^* M^*\Gamma) K_l(\cdot,w)\\
&=&\Gamma^* M_z^* t_l(w)\\
&=&\Gamma^*(\bar w t_l(w)+\mu_{l+1,l}t_{l-1}(w))\\
&=&\bar w K_l(\cdot,w)+ \mu_{l+1,l}K_{l-1}(\cdot,w).
\end{eqnarray*}
Now,
\begin{eqnarray*}
T_{l}(K_l(\cdot,w))&=&P_{\mathcal{M}_l}T_{|\mathcal{M}_l}(K_l(\cdot,w))\\
&=& P_{\mathcal{M}_l}T (K_l(\cdot,w))\\
&=& P_{\mathcal{M}_l}(\bar w K_l(\cdot,w)+ \mu_{l+1,l}K_{l-1}(\cdot,w))\\
&=& \bar w K_l(\cdot,w).
\end{eqnarray*}
Let $S_{l-1,l}:\mathcal{M}_l\to \mathcal{M}_{l-1}$ be the bounded
linear operator defined by the rule $S_{l-1,l}(K_l(\cdot,w)):=
\mu_{l+1,l}K_{l-1}(\cdot,w),\,1 \leq l\leq k-1 $. Since
$\mathcal{M}_l=\mathcal{M}_{l-1}=\mathcal{M}$, it follows that
$S_{l-1,l}=\mu_{l+1,l}I$. Hence operator $T$ has the form:
  $$T=\begin{pmatrix} T_0 & \mu_{2,1} I & 0 &\cdots & 0 & 0 \\
  0 & T_0 & \mu_{3,2}I & \cdots & 0  & 0\\
  0 & 0 & T_0 & \ddots & \vdots &\vdots\\
  \vdots& \vdots & \vdots &  \ddots&  \mu_{k-1,k-2}I & 0 \\
  0 & 0 & 0 & \cdots & T_{0} & \mu_{k,k-1}I\\
  0 & 0 & 0 & \cdots & 0 & T_{0}\\
  \end{pmatrix}.$$
%where %$T_{i}=T_j$  for $0\leq i, j\leq k-1$.
%$T_{i}=T_0, \, 1\leq i\leq k-1.$ 
Thus $T$ is in $\tilde{\mathcal F}B_k(\Omega)$ and defines, up to unitary equivalence via the unitary $\Gamma,$  the module action in $\mathcal M_{\rm loc}.$ In consequence, setting $\mathbb C^k_w[\boldsymbol \mu]$ to be the Hilbert module with the module action induced by $\mathcal J_{\boldsymbol \mu}(f)(w),$ we have the following theorem as a direct application of Theorem \ref{uinvs}. 
\begin{thm}
The Hilbert modules corresponding to the localizations $J\mathcal M\otimes_{P[z]} \mathbb C^k_w[\boldsymbol \mu_i],$ $i=1,2,$ are in $B_k(\Omega)$ and they are isomorphic if and only if $\boldsymbol \mu_1 = \boldsymbol \mu_2.$
\end{thm}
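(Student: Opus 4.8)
The plan is to obtain the theorem as a direct corollary of Theorem~\ref{uinvs}, the substantive work having already been done in the discussion preceding the statement. First I would record what that discussion gives: the unitary $\Gamma = J_k\oplus\cdots\oplus J_1$ identifies the Hilbert module $\mathcal M_{\rm loc}$ realizing $J\mathcal M\otimes_{P[z]}\mathbb C^k_w[\boldsymbol\mu]$ with the space $\widetilde{\mathcal M}=\mathcal M\oplus\cdots\oplus\mathcal M$ ($k$ copies) carrying the upper-triangular operator $T_{\boldsymbol\mu}$ having the fixed operator $T_0$ in each diagonal block and $\mu_{j+1,j}I$ on the super-diagonal, and $T_{\boldsymbol\mu}\in\tilde{\mathcal F}B_k(\Omega)$. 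Since $T_0\in B_1(\Omega)$ and $\tilde{\mathcal F}B_k(\Omega)\subseteq\mathcal F B_k(\Omega)\subseteq B_k(\Omega)$, this already proves the membership assertion. Moreover, a unitary isomorphism of the Hilbert modules $J\mathcal M\otimes_{P[z]}\mathbb C^k_w[\boldsymbol\mu_1]$ and $J\mathcal M\otimes_{P[z]}\mathbb C^k_w[\boldsymbol\mu_2]$ is the same thing as a unitary operator intertwining $T_{\boldsymbol\mu_1}$ and $T_{\boldsymbol\mu_2}$, so the problem reduces to deciding unitary equivalence inside $\tilde{\mathcal F}B_k(\Omega)$.

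Next I would apply Theorem~\ref{uinvs}, which supplies the complete invariants: (i) the curvature $\mathcal K_{T_0}$ of the last diagonal block, and (ii) the ratios $\|t_{j-1}\|/\|t_j\|$ for $1\le j\le k-1$, where $t_{k-1}$ is a non-vanishing holomorphic section of the common bundle $E_{T_0}$ and $t_{j-1}=S_{j-1,j}(t_j)$. Invariant (i) is identical for $\boldsymbol\mu_1$ and $\boldsymbol\mu_2$, since in both cases the relevant block is the fixed $T_0$, so it contributes nothing. For invariant (ii): every diagonal block is $T_0$ on the single space $\mathcal M$, each section may be taken to be the reproducing-kernel section $K(\cdot,w)$ (so all section norms coincide), and $S_{j-1,j}=\mu_{j+1,j}I$; hence $\|t_{j-1}\|/\|t_j\|=|\mu_{j+1,j}|$. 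Theorem~\ref{uinvs} therefore says $T_{\boldsymbol\mu_1}\cong T_{\boldsymbol\mu_2}$ if and only if $|\mu^{(1)}_{j+1,j}|=|\mu^{(2)}_{j+1,j}|$ for every $1\le j\le k-1$.

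Finally I would translate this back into the parameter $\boldsymbol\mu$. The Lemma characterizing $\mathcal J_{\boldsymbol\mu}$ shows, via the multiplicativity relations $\mu_{p,l}\mu_{l,i}=\binom{p-i}{l-i}\mu_{p,i}$ and $\mu_{i,i}=1$, that an admissible family $(\mu_{i,j})$ is determined by its first column, equivalently by its sub-diagonal entries $\mu_{j+1,j}$. Under the normalization in which $\boldsymbol\mu$ is recorded (the constants $b_{p,l}$, and hence all $\mu_{i,j}$, positive), equality of the moduli $|\mu_{j+1,j}|$ is exactly equality of $\boldsymbol\mu$; conversely $\boldsymbol\mu_1=\boldsymbol\mu_2$ makes the identity an isomorphism, which gives the stated equivalence. (Absent such a normalization one still recovers the isomorphism class from the moduli: when they agree the isomorphism is built by absorbing the phases $\mu^{(2)}_{j+1,j}/\mu^{(1)}_{j+1,j}$ into a block-diagonal unitary of the type produced by Theorem~\ref{hdu}.)

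I expect the only real obstacle, which is bookkeeping rather than a genuine difficulty, to be twofold: verifying, as in the computation $K_l(\cdot,w)=K(\cdot,w)$ preceding the statement, that the section norms entering invariant (ii) genuinely cancel so the ratio is precisely $|\mu_{j+1,j}|$; and keeping straight the two roles of the $\mu$'s, namely the sub-diagonal entries $\mu_{j+1,j}$ that realize the operators $S_{j-1,j}$ (which invariant (ii) sees) versus the parameter $\boldsymbol\mu$ labelling $\mathbb C^k_w[\boldsymbol\mu]$, so that the conclusion reads ``$\boldsymbol\mu_1=\boldsymbol\mu_2$.''
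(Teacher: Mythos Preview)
Your proposal is correct and is exactly the approach the paper takes: the paper states the theorem ``as a direct application of Theorem~\ref{uinvs}'' with no further proof, and you have simply written out that direct application---computing that the curvature invariant is common and that the ratio invariants collapse to $|\mu_{j+1,j}|$. Your care about the normalization of $\boldsymbol\mu$ (needed to pass from equality of moduli to equality of parameters) is in fact more than the paper supplies.
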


%%%%%%%%%%%%%%%%%%%%%%%%%%%%%%%%%%%%%%%%%%%%%%%%%%%%%%%%%%%%%%%%%%%%%%%%%%%
\bibliographystyle{amsplain}
\bibliography{bibliography}
%%%%%%%%%%%%%%%%%%%%%%%%%%%%%%%%%%%%%%%%%%%%%%%%%%%%%%%%%%%%%%%%%%%%%%%%%%%%%%
\end{document}